\renewcommand{\ref}[1]{\eqref{#1}}
\DeclarePairedDelimiter\floor{\lfloor}{\rfloor}
\newcommand{\Y}{\mathbb{Y}}
\newcommand{\rcv}{\texttt{RCV1}}
\newcommand{\ctype}{\texttt{Covertype}}
\newcommand{\astro}{\texttt{Astro-ph}}
\newcommand{\aset}{\texttt{Alpha}}
\def\mbf{\mathbf}
\def\mbs{\boldsymbol}
\def\mc{\mathcal}
\def\spec{\sigma_1}
\def\lmax{\lambda_{\mathrm{max}}}
\newcommand{\ds}{\hat{\rho}^2}
\newcommand{\dists}{\rho}
\DeclareMathOperator*{\argmin}{argmin}
\DeclareMathOperator{\Tr}{Tr}
\newcommand{\eqdef}{\stackrel{\text{def}}{=}}
\newcommand{\R}{\mathbb{R}}                      
\newcommand{\Prob}{\mathbb{P}}                   
\newcommand{\E}{\mathbb{E}}                      
\newcommand{\calF}{\mathcal{F}}
\newcommand{\commfrac}{\nu}
\newcommand{\x}{ {\bf x}}
\newcommand{\vv}{ {\bf v}}
\newcommand{\X}{ {\bf X}}
\newcommand{\Z}{ {\bf Z}}
\newcommand{\Q}{ {\bf Q}}
\newcommand{\w}{  {\bf w}}
\newcommand{\z}{\mathbf{z}}
\newcommand{\vc}[2]{#1^{(#2)}}                   
\newcommand{\removed}[1]{}
\newcommand{\step}{\eta}
\newcommand{\norm}[1]{\left\| #1 \right\|}
\newcommand{\trans}{{\top}}
\newcommand{\specnorm}{\rho}
\newcommand{\reg}{\mu}
\newcommand{\bP}{\mathbf{P}}
\newcommand{\g}{\mathbf{g}}
\newcommand{\sgi}[2]{\beta_{#1,#2}}
\newcommand{\sgt}[1]{\gamma_{#1}}
\newcommand{\SGI}[1]{\mbs{\beta}_t}
\newcommand{\SGT}{\mbs{\gamma}}
\begin{document}

\frontmatter	  

\title  {Data Dependent Convergence for Distributed Stochastic Optimization}
\authors  {\texorpdfstring
            {\href{http:www.avleenbijral.com}{Avleen Singh Bijral}}
            {Avleen Singh Bijral}
            }
\addresses  {\groupname\\\deptname\\\univname}  
\date       {\today}
\subject    {}
\keywords   {}
    
\maketitle

\setstretch{1.3}  

\fancyhead{}  
\rhead{\thepage}  
\lhead{}  

\pagestyle{fancy}  

\clearpage  

\pagestyle{empty}  

\null \null \null \null \null \null \null \null \null \null \null
\textbf{\textit{``I don't like work... but I like what is in work - the chance to find yourself. Your own reality - for yourself, not for others - which no other man can ever know.''}}

\begin{flushright}
Joseph Conrad
\end{flushright}

\clearpage  

\addtotoc{Abstract}  
\abstract{
\addtocontents{toc}{\vspace{1em}}  

In this dissertation we propose alternative analysis of distributed stochastic gradient descent (SGD) algorithms that rely on spectral properties of the data covariance. As a consequence we can relate questions pertaining to speedups and convergence rates for distributed SGD to the data distribution instead of the regularity properties of the objective functions. More precisely we show that this rate depends on the spectral norm of the sample covariance matrix. An estimate of this norm can provide practitioners with guidance towards a potential gain in algorithm performance. For example many sparse datasets with low spectral norm prove to be amenable to gains in distributed settings. Towards establishing this data dependence we first study a distributed consensus-based SGD algorithm and show that the rate of convergence involves the spectral norm of the sample covariance matrix when the underlying data is assumed to be independent and identically distributed (homogenous). This dependence allows us to identify network regimes that prove to be beneficial for datasets with low sample covariance spectral norm. Existing consensus based analyses(\cite{dualAveraging}, \cite{nedicDistributedOptimization}, \cite{distrStochSubgrOpt}) prove to be sub-optimal in the homogenous setting. Our analysis method also allows us to find data-dependent convergence rates as we limit the amount of communication. Spreading a fixed amount of data across more nodes slows convergence; in the asymptotic regime we show that adding more machines can help when minimizing twice-differentiable losses. Since the mini-batch results don't follow from the consensus results we propose a different data dependent analysis thereby providing theoretical validation for why certain datasets are more amenable to mini-batching. We also provide empirical evidence for results in this thesis.

}
\clearpage
\acknowledgements{
\addtocontents{toc}{\vspace{1em}}  

I may not have been completely successful in reaching the high bar set by my advisor Nati Srebro, but I did learn a lot from this experience, firstly as his student and also from the excellent courses he taught. The central ideas of this thesis would not have borne fruit without the intense scrutiny and the insightful feedback I received. I remain forever thankful to him.

I would also like to thank Anand Sarwate for being an outstanding second advisor and for teaching me much about research and about writing papers. Many ideas in this thesis were the result of long discussions with Anand and without countless red marks left on the several drafts these ideas would not have reached a conclusion. I also owe a debt to the instructors of several courses I had the pleasure of taking - Greg Shaknarovich,  Laci Babai, Alex Eskin and many others. The period at TTI-C would not have been the same without the company of my friends: Feng, Jian, Hao, Payman, Behnam, Somaye, Ankan, Shubendu, Taehwan, Andy, Karthik, Zhiyong and Jianzhu. 

The staff at TTI-C was always available for help with any administrative tasks. Many thanks to Chrissy, Adam and Liv. 

Without the unconditional support and encouragement of my father, mother and brother this would have never taken shape. It was towards the end of my time at graduate school that I met Amanda, but from then to the completion of this thesis she has been a steady source of love and encouragement. Her patience and presence was crucial and I am indebted to her. Finally, without our dog Nayeli, writing this wouldn't have been half as fun.

}
\supervisor{Nathan Srebro}
\examiner{Anand D. Sarwate}
\examiner{Gregory Shaknarovich}
\examiner{Lim Lek-Heng}
\clearpage  

\setstretch{1.3}  

\clearpage  

\pagestyle{fancy}  

\lhead{\emph{Contents}}  
\tableofcontents  

\lhead{\emph{List of Figures}}  
\listoffigures  

\lhead{\emph{List of Tables}}  
\listoftables  

\setstretch{1.5}  


\setstretch{1.3}  


\addtocontents{toc}{\vspace{2em}}  

\mainmatter	  
\pagestyle{fancy}  



\chapter{Introduction} 
\label{Chapter1}
\lhead{Chapter 1. \emph{Introduction}} 

\section{Overview-Data Dependent Distributed Stochastic Optimization}\label{section:overVi}

Stochastic convex optimization in machine Learning and statistics often refers to the problem   
\begin{align}
F(\w) \eqdef \E_{\x\sim\mc{P}}\left[\ell(\w^{\trans}\x)\right]
\label{eq:optForm}
\end{align}
where the stochasticity is in the access model. We have access to stochastic (sub)gradients $\mbf{\hat{g}}$ such that $\E\left[\mbf{\hat{g}}\right] \in \partial F(\w)$, the subgradient set at $\w$. The goal in Machine Learning and often Statistics is to solve for a regularized version of problem \eqref{eq:optForm} given a sample $\x_1,...,\x_N$ from the distribution $\mc{P}$.

The methods used to solve problem \eqref{eq:optForm} are randomized variations of optimization techniques that are often more feasible in large scale settings . As an example stochastic gradient descent (SGD) for empirical risk minimization with its inexpensive updates involving unbiased estimates of the gradient performs significantly better than the batch gradient descent since the batch gradient computation involves one complete iteration over the entire training sample. As a consequence SGD has found immense applications in large scale machine learning. See \citet{SSSC11:pegasos} and \citet{TakacBRS:13icml} for some of the more prominent examples.

However, despite all the advantages of stochastic gradient descent it is inherently a sequential method. For applications involving very large datasets the need for parallelization becomes imminent. This drawback, aided by developments in the world of cheaply deployed networks motivated the development of several distributed optimization methods with stochastic extensions, notably those of \citet{nedicDistributedOptimization}, \citet{DistStronglyConvex}, \citet{dualAveraging}, \citet{incrementalBertsekas} etc. In all these methods an ensemble of loss functions is assumed to be distributed across a network of $m$ compute nodes and communication protocols are proposed that tie in with standard optimization updates. As expected, the convergence guarantees involve parameters of the underlying communication graph either through the mixing rate of a related Markov random walk on the graph (See \citet{dualAveraging}) or parameters directly depending on the entries of the Markov matrix corresponding to this random walk (\cite{nedicDistributedOptimization},\cite{distrStochSubgrOpt}). 

On the downside these methods require communicating at every iteration and the communication cost for high dimensional optimization variables can offset the advantage obtained by distributing the computation. One might then wonder if the communication requirements can be relaxed. At the very end of this low communication spectrum is a strategy where the nodes perform their local computations and only communicate at the end by avergaing the iterates from all nodes. This simple averaging was recently analyzed by \citet{ZhangDW:12} and shown to be a viable strategy for a class of loss functions with strong regularity properties. In the intermediate regime one could propose more general communication protocols to offset the cost of frequent communication. Communicating a fixed proportion of the total number of iterations is a simple example. 

A different approach to distribute SGD could be to parallelize the computation of (sub)gradients. Instead of sampling a single estimate of the (sub)gradient we could sample $b$ estimates (mini batch) of the (sub)gradient and return an averaged estimate by processing the computation of each of the $b$ (sub)gradients on separate compute nodes. It is easy to see that this strategy conforms to the star network topolgy. In the analysis the next step would be to relate the objective error to variance reduction (\citet{duchiBartlett}) or the smoothness properties of the objective function (\citet{CotterEtAl}).

Most of the distributed stochastic optimization strategies alluded to take on an immense significance only in the context of machine learning and statistics and the central object of all machine learning and statistics research and practice is data and its properties. How then are these properties conducive to better performance of these methods? Are there data sources (distributions) for which these methods give better error guarantees or are more amenable to relaxed communication regimes? It is somewhat appealing to provide distribution-free guarantees, methods that warrant consistent performance, independent of the data at hand. But as we shall see in detail in the ensuing chapters, they sometimes come at a cost. The assumptions required for the error guarantees require an increasing degree of regularity (as in smoothness and boundedness) of the loss functions and the data (See \cite{ZhangDW:12} and \cite{CotterEtAl}). This leaves out several important non-smooth problems (e.g. SVMs) and furthermore mask the question whether some data distributions in general are more accomodating to better parallelization performance. This is a fact of considerable importance to the practicioner.

In this thesis we attempt to bridge the gap between the convergence properties and communication requirements of a class of distributed stochastic optimization problems and the underlying data distribution. We will establish data dependent convergence rates for several distributed strategies
\removed{\begin{itemize}
\label{itemize:distStrat}
\item Consensus based stochastic optimization with communication at every round.
\item Intermittent communication protocols.
\item Mini batch based SGD.
\end{itemize}}
Specifically, we establish the dependence of convergence rates for a class of $\ell_2$-regularized problems on the spectral properties of the sample covariance matrix. This, as we shall see will have important implications empirically. Additionally we shall provide empirical evidence of the dependence of the average at the end strategy on the spectral norm. 

In Section \eqref{section:prelim} we formally introduce the stochastic optimization problem including the assumptions made, additonal assumptions if required will be described in the relevant sections in later chapters.  In Section (\ref{section:commIter}) and (\ref{section:commInfreq}) we explore the consensus SGD paradigm and discuss existing work. Section (\ref{section:mBatch}) describes a mini batch approach to distributed stochastic optimization alongwith known results.  Finally Section (\ref{section:commEnd}) describes the minimal communication approach also known as one shot averaging.  

\section{Preliminaries}\label{section:prelim}
In this section we establish the basic assumptions and notation for the remainder of the thesis

\textbf{Data model.} Let $\hat{\mc{P}}$ be the empirical distribution corresponding to an i.i.d sample $S = \{\x_1, \x_2, \ldots, \x_N\}$ in $\mathbb{R}^d$ such that $\x \sim \mc{P}$ satisfies $\norm{\x} \le 1$ almost surely.  Let $\mbs{\hat{\Sigma}} = \E_{\x \sim \hat{\mc{P}}}[ \x \x^{\trans} ]$ be the sample covariance matrix.  Our goal is to express the performance of our algorithms in terms of $\rho^2 = \spec(\mbs{\hat{\Sigma}})$ where $\spec(\cdot)$ denotes the maximum singular value.  

\textbf{Problem.}  Our problem is to minimize:
\begin{align}
	J(\w) \eqdef \frac{\sum_{i=1}^{N}\ell(\w^{\trans}\x_i)}{N} + \frac{\mu}{2} \norm{\w}^2.
	\label{eq:optForm}
	\end{align}
where
	\begin{align}
	\w^* \eqdef \argmin_{\w} J(\w)
	\label{eq:globalopt}
	\end{align}
\removed{We defer studying the gap between the iterates and the minimizer of the population objective in \eqref{eq:optForm} to future work.}
We will denote the subgradients of $J(\w)$ by $\nabla J(\w) \in \partial J(\w)$.

In our analysis we will make the following assumptions about the individual functions $\ell(\w^{\trans}\x)$:
\begin{itemize}
 \item The loss functions $\{ \ell(\cdot) \}$ are convex
\item The loss functions $\{ \ell(\cdot) \}$ are $L$-Lipschitz for some $L > 0$.
\end{itemize}
Any further assumptions will be made where necessary. Note that $J(\w)$ is $\mu$-strongly convex due to the $\ell_2$-regularization. For binary classification problems $\x$ is assumed to be scaled by its label in $\{-1,+1\}$.

\textbf{Network Model.} We consider a model in which minimization in \eqref{eq:globalopt} is carried out by $m$ computational nodes (cpu cores or machines).  These nodes are arranged in a network whose topology is given by a graph $\mc{G}$ -- an edge $(i,j)$ in the graph means nodes $i$ and $j$ can communicate.  A matrix $\bP$ is called graph conformant if $\bP_{ij} > 0$ only if the edge $(i,j)$ is in the graph.  We will consider algorithms which use a doubly stochastic and graph conformant sequence of matrices $\bP(t)$. Note that the mini batching paradigm corresponds to the star topology for $\mc{G}$.

\section{Consensus Based Optimization}\label{section:commIter}

In consensus-based optimization methods information about the local iterates is exchanged every iteration with the neighboring nodes as defined by a communication network. Such network optimization problems appear in a variety of applications such as multi-agent coordination and estimation problems in sensor networks. The chief requirement in these problems is the need to distribute computation across several, usually inexpensive, compute nodes and communicate lightly in a robust fashion. In machine learning such a setting is useful, for example when dealing with very large scale datasets. Subsets of data can then be distributed across a network and the compute nodes exchange information to solve the underlying loss minimization problem.

 
 Several authors have proposed distributed algorithms involving nodes computing local gradient steps and averaging iterates, gradients, or other functions of their neighbors~\citep{nedicDistributedOptimization,dualAveraging,distrStochSubgrOpt}.  By alternating local updates and consensus with neighbors, estimates at the nodes converge to the optimizer of $J(\cdot)$. 

\citet{nedicDistributedOptimization} propose a subgradient method and analyze the consensus problem when the objective $J(\dot)$ is unconstrained convex (not necessarily smooth) and give a constant error convergence bound $\frac{m}{\eta}\mc{O}\left(\frac{1}{T}\right) + const$ with a constant step-size. The analysis was then generalized and extended to the constrained case \citep{distrStochSubgrOpt} with (sub)gradients corrupted by stochastic noise, still obtaining a constant error convergence and a $\mc{O}(m^4)$ scaling with respect to the network. In contrast \citet{DistStronglyConvex} extended the primal consensus framework to the strongly convex regime and obtained $\mc{O}\left(\frac{\log(Tm)}{(1-\lambda_2(\bP))T}\right)$ convergence for a epoch decreasing step-size. Various extensions of the consensus problems have been proposed for the case of time varying networks \cite{DistOptTimeVarying} and asynchronous communication protocols \cite{DistOptAsyn}.

\citet{dualAveraging} proposed a consensus version of Nesterov's dual averaging algorithm \cite{NesterovDA} for a convex objective. The method essentially performs the consensus step in the dual space and employs a network dependent step-size to obtain a $\mc{O}\left(\frac{\log(Tm)}{\sqrt{(1-\lambda_2(\bP))T}}\right)$ guarantee. A tradeoff analysis for distributed dual averaging, relating the network size to the frequency of communication was presented by \citet{TsianosNIPS2012} wherein recipes for choosing the network size to offset low communication were presented. 

Next we discuss in detail some of the representative work for consensus based optimization algorithms.

\subsection{Consensus Primal Averaging}\label{subsec:DPA}

In this setting the topology of the network connecting the compute nodes is described by a graph $\mc{G}=\{\mc{V},\mc{E}\}$ where $\mc{V}$ is the set of the vertices and the $\mc{E}$ is edge set. Moreover there is a doubly stochastic matrix $\bP \in \Re^{m \times m}$ satisfying the graph constraints $\bP_{ij} > 0$ if $\{i,j\} \in \mc{E}$ and $\bP_{ij} = 0$ otherwise. The edge weights and the graph topology have a significant impact on the convergence of the distributed methods to be described and we will expand upon this topic in Chapter \eqref{Chapter5}.

The general synchronized primal averaging framework is as described in Algorithm \eqref{algorithm:D-SGD} where the choice of step-size, $\eta_t$ and the doubly stochastic matrix $\bP(t)$ will lead to different algorithms.

\begin{algorithm}[!htb]
 \caption{Distributed Stochastic Gradient Descent - Primal Averaging}
\begin{algorithmic}\label{algorithm:D-SGD}
\STATE \COMMENT{Each $i \in [m]$ executes}
\STATE Initialize $\w_i(0)=\mathbf{0}$ for all nodes $i \in [m]$ 
\FOR{$t=1$ {\bfseries to} $T$ }
\STATE Compute $\g_i(t)$ an unbiased estimate of the (sub)gradient $\nabla J(\w_i(t))$ 
\STATE $\w_i(t+1) = \sum_{j=1}^{m} \bP_{ij}(t)\w_j(t) - \eta_t \g_i(t)$
\ENDFOR
\STATE Return for any $i$
\begin{align*}
\bar{\w}_T(i)=\frac{1}{T}\sum_{t=1}^{T} \w_i(t)
\end{align*}
\end{algorithmic}
\end{algorithm}

Algorithm \eqref{algorithm:D-SGD} in essence computes an averaged primal estimate and then takes a stochastic (sub)gradient step. With some universal and specific assumptions we can recover different algorithms
\begin{enumerate}[I]
\item Universal 
\begin{enumerate}\label{enumerate:uniAssum} 
\item  Graph $\mc{G}=\{\mc{V},\mc{E}\}$ is connected.
\item The matrix $\bP(t)$ is doubly stochastic.
\item  $J(\w)$ is convex.
\end{enumerate}
\item Distributed subgradient method of \citet{nedicDistributedOptimization}.
\begin{enumerate}\label{enumerate:nedicAssum}
\item There exists a scalar $0<\alpha <1$ such that for all $i \in [m]$
\begin{enumerate}
\item $\bP_{ii}(t) \geq \alpha$ for all $t$.
\item $\bP_{ji}(t) \geq \alpha$ for all $t$ and all $j$ such that $\{j,i\} \in \mc{E}$.
\item The step-size $\eta_t = \eta$ is constant.
\end{enumerate}
\end{enumerate}
\item Distributed strongly convex optimization of \citet{DistStronglyConvex}.
\begin{enumerate}\label{enumerate:rabbatAssum}
\item $J(\w)$ is strongly convex.
\item $\bP(t)$ is fixed for all $t$.
\item The step-size is fixed for a epoch and is halved after each epoch.
\end{enumerate}
\end{enumerate}

Assumptions (\ref{enumerate:uniAssum}) are necessary for the convergence of the local iterates to the global optimum for all the primal and dual averaging methods, while assumptions (i) and (ii) from \ref{enumerate:nedicAssum} specific to \cite{nedicDistributedOptimization} imply a minimum amount of communication at every iteration.


In Chapter \eqref{Chapter3} we will describe a distributed strongly convex algorithm with a decreasing step-size and a corresponding data dependent guarantee.

\subsection{Consensus Dual Averaging}

In \cite{dualAveraging} Duchi et al. extend the dual averaging algorithm of Nesterov (\cite{NesterovDA}) using the distributed framework described in section (\ref{subsec:DPA}). The method is given in Algorithm (\ref{algorithm:Dual-Av})

\begin{algorithm}[!htb]
 \caption{Distributed Stochastic Dual Averaging}
\begin{algorithmic}\label{algorithm:Dual-Av}
\STATE \COMMENT{Each $i \in [m]$ executes}
\STATE Initialize $\z_i(0)=\mathbf{0}$ for all nodes $i \in [m]$ 
\FOR{$t=1$ {\bfseries to} $T$ }
\STATE Compute $\g_i(t)$ an unbiased estimate of the (sub)gradient $\nabla J(\w_i(t))$ 
\STATE $\z_i(t+1) = \sum_{j=1}^{m} \bP_{ij}(t)\z_j(t) + \g_i(t)$
\STATE $\w_i(t+1) = \argmin_{\w \in \mc{W}} \left \{ \z_i(t+1)^{\trans}\w + \frac{1}{\eta_t} \psi(\w) \right \}$
\ENDFOR
\STATE Return for any $i$
\begin{align*}
\bar{\w}_T(i)=\frac{1}{T}\sum_{t=1}^{T} \w_i(t)
\end{align*}
\end{algorithmic}
\end{algorithm}

The algorithm holds at each iteration $t$ for all $i$ a pair of vectors $(\w_i(t),\z_i(t))$ and in stark constrast to algorithm (\ref{algorithm:Dual-Av}), at time $t+1$ computes the new dual parameter from a weighted average of the dual parameter of its neighbors and then computes the next local iterate by a projection defined by the proximal function $\psi$ and the step-size $\eta_t$.

\subsection{Convergence Guarantees}

Table \eqref{tab:convGuarantee} shows convergence rates for primal and dual averaging algorithms. Note that \citet{dualAveraging} only look at the convex case and hence the sub-optimal dependence on $T$. But since we are mostly interested in the interplay of network and the data, the order of convergence is less important to our study

\begin{center}
\resizebox{\columnwidth}{!}{%
\begin{tabular}{ |c|c|c|c| } 
 \hline
 \textbf{Algorithm} & \textbf{Type} & \textbf{Step-size} & \textbf{Error}\\ 
    \citet{nedicDistributedOptimization} & Primal Avg. & $\eta_t = \eta$ & $J(\bar{\w}_i(T)) - J(\w^{*}) = \frac{m}{\eta}\mc{O}\left(\frac{1}{T}\right) + const. $\\ 
      \citet{DistStronglyConvex} & Primal Avg. & $\eta_t = \text{Epoch Decaying}$ & $J(\bar{\w}_i(T)) - J(\w^{*}) = \mc{O}\left(\frac{\log(Tm)}{(1-\lambda_2(\bP))T}\right)$ \\
            \citet{dualAveraging} & Dual Avg. & $\eta_t = \frac{c(1-\sqrt{\lambda_2(\bP)})}{L\sqrt{t}}$ & $J(\bar{\w}_i(T)) - J(\w^{*}) = \mc{O}\left(\frac{\log(Tm)}{\sqrt{(1-\lambda_2(\bP))T}}\right)$ \\
 \hline
\end{tabular} \label{tab:convGuarantee} 
}
\end{center}

All the results in Table \eqref{tab:convGuarantee} suggest that the error worsens with network size $m$. For a network topology that is not completely connected this is expected since as we spread a finite dataset across more and more machines it takes longer for machines to receive information about a fresh sample. Consequently the convergence guarantees do not reflect the setting when the data is homogenous (for e.g. when data has the same distribution), specifically error increases as we add more machines. This is counterintuitive, especially in the large scale regime, since this suggests that despite homogeneity the methods perform worse than the centralized setting (all data on one node).

This thesis (Chapter \ref{Chapter3}) provides a first analysis of a consensus based stochastic gradient method in the homogenous setting  and demonstrate that there exist regimes 
where we benefit from having more machines in any network.

\section{General Communication Strategies}\label{section:commInfreq}

A simple way to reduce the cost of frequent communication for Algorithm \eqref{algorithm:D-SGD} or sidestep the limitations of the underlying network is to incorporate more general protocols (e.g. intermittent  and asynchronous) through the Markov matrix $\bP(t)$ (in Algorithm \eqref{algorithm:D-SGD}) in the distributed stochastic optimizaiton regime. A simple example being the intermittent regime where the nodes communicate at a fixed frequency and perform communication free local updates the rest of the time. The generic communication protocol was also analyzed in the context of distributed dual averaging \citep{dualAveraging}.

A tradeoff analysis, relating the network size to the frequency of communication was presented in \cite{TsianosNIPS2012} wherein recipes for choosing the network size to offset low communication were presented. In Chapter \eqref{Chapter4} we present instead data dependent error rates for general communication protocols and example show that certain distributions are more amenable to communicating intermittently.

To mitigate the effect of limited communication, in Chapter \eqref{Chapter4} we propose and analyze a mini-batched extension to reduce communication costs. We interpret this as an intermediate regime between full communication and one-shot communication~\citep{ZhangDW:12},\citep{ShamirSrebroZhang:14icml}. Finally, we show that for twice-differentiable losses having more machines always helps (via a variance reduction) in the infinite data regime, using results of Bianchi et al.~\cite{BianchiFortHachem:13IEEETrans}.

\section{Mini Batches}\label{section:mBatch}

In stochastic gradient descent (SGD), the algorithm processes points sequentially and updates its estimate of the optimum after sampling each point as shown in Algorithm \eqref{algorithm:SGD}
\begin{algorithm}[!htb]
 \caption{Stochastic Gradient Descent}
\begin{algorithmic}\label{algorithm:SGD}
\STATE Initialize $\w_0=\mathbf{0}$
\FOR{$t=1$ {\bfseries to} $T$}
\STATE Compute $\g_t=\nabla f(\w(t);\x_{i_t})$, the unbiased estimate of $\nabla F(\w(t))$ at $\x_{i_t}$.
\STATE $\eta_t = \frac{c}{\lambda t}$
\STATE $\w(t+1) = \w(t) - \eta_t \g(t)$
\ENDFOR
\STATE Return
\begin{align*}
\bar{\w}(T)=\frac{1}{T}\sum_{t=1}^{m} \w(t)
\end{align*}
\end{algorithmic}
\end{algorithm}

To benefit from multiple compute nodes a common practice is to perform the computation of a mini batch of gradients in parallel. This corresponds to
\begin{align}
\g(t) = \frac{\sum_{i=1}^ b \g_i(t)}{b}
\end{align} 
in Algorithm \eqref{algorithm:SGD}. 

Recently the use of mini-batches in stochastic gradient descent,
as well as stochastic dual averaging and stochastic mirror descent, when minimizing a smooth loss
function has been considered (\cite{AgarwalD:11nips}, \cite{CotterSSS:11nips}). These works establish parallelization speedups for smooth loss minimization with mini-batches using accelerated variants of Algorithm \eqref{algorithm:D-SGD}. However, these results do not apply to non-smooth (but strongly convex) objective functions (for e.g. SVM). In Chapter \eqref{Chapter5} we prove data dependent bounds for the parallelization speedups for convex and strongly convex (not necessarily smooth) objectives.

\section{One Shot Averaging}\label{section:commEnd}
Parallelization using mini-batch and distributed primal averaging bears
a very high communication cost: the nodes must still communicate
the gradient estimates or the primal vectors every iteration.  The overall communication might be linear
in the total size of the data set, which may be prohibitive
in some distributed environments.
Instead, several authors have suggested that independent optimization
at each node followed by averaging the resulting predictors works well
in  practice~\cite{MannMMSW:09,McDonaldHM:2010,ZinkevichWSL:10,ZhangDW:12}.
We refer to such procedures as \textit{average-at-the-end}.  This
approach sits on the opposite end of the communication spectrum, with
each machine sending only a single message at the end of the
computation.  

Zhang et al.~\cite{ZhangDW:12} recently presented an analysis for the average-at-the-end procedure for twice-smooth loss functions (with bounded first, second and third derivatives) and additional bounded moment assumptions. In the statistical setting, the average-at-the-end approach can be thought of as simply averaging random draws from a probability distribution on predictors induced by the stochastic optimization algorithm.  

\subsection{\textit{Average-at-the-end}}

The setting consists of a dataset of $N=mn$ samples which is divided uniformly among a network of $m$ computing nodes. For each node there exists a local empirical objective
\begin{align*}
 F_{S_j}(\w)=\frac{1}{n}\sum_{\x \in S_j} f(\w;\x)
\end{align*}
where $j=1\ldots m$.
\begin{algorithm}[!htb]
\caption{SAVGM}
\begin{algorithmic}\label{algorithm:savgm}
\STATE For each $i\in [m]$ the node $i$ computes its approximate local minimizer
\begin{align*}
\w_i^{n} \approx \arg \min_{\w\in \mathcal{C}} \frac{1}{n}\sum_{\x \in S_j} f(\w;\x)
\end{align*}
by running algorithm \ref{algorithm:SGD} for $n$ iterations with step-size $\eta_t=c/(\lambda t)$ for some constant $c>1$.\\
\STATE {\bfseries Output:} 
\begin{align*}
\bar{\w}^n=\frac{1}{m}\sum_{i=1}^{m} \w^{n}_i
\end{align*}
\end{algorithmic}
\end{algorithm}

For algorithm \ref{algorithm:savgm} it is then shown that under the following assumptions
\begin{itemize}
\item \textbf{Assumption 1:} The parameter space $\mathcal{C} \subset \Re^d$ is a compact convex set and each individual loss function $f(\w;\x_i)$ is convex. A standard assumption across several optimization problems.
\item \textbf{Assumption 2:} There exists a function $L:\mathcal{X} \to \Re^{+}$ such that
\begin{align*}
\norm{\nabla^2 f(\w;\x) - \nabla^2 f(\w^{*};x)} \leq L(\x) \norm{\w-\w^{*}}
\end{align*}
and $\E[L^2(X)] \leq L^2$ and $\norm{\cdot}$ refers to the matrix operator norm. This assumption implies Lipschitz continuity of the Hessian near the optimal value.
\item \textbf{Assumption 3:} There are finite constants $G$ and $H$ such that 
\begin{align*}
\E[\norm{\nabla f(\w;X)}^4]\leq G^4 \text{, } \E[\norm{\nabla^2 f(\w^{*};X)}^4]\leq H^4 \ \forall \w \in \mathcal{C}.
\end{align*}
\item \textbf{Assumption 4:} The complete sample function $F(\w)$ is $\lambda$-strongly convex over the space $\mathcal{C}$ implying
\begin{align*}
\nabla^2 F(\w) \succeq \lambda I
\end{align*}
for all $\w \in \mathcal{C}$. This last assumption necessitates a non-negligible curvature of the global optimization function.
\end{itemize}

the following result holds (Theorem $3$ from \cite{ZhangDW:12})

\begin{theorem}\label{thm:savgmres}
\begin{align*}
\E \norm{\bar{\w}^n-\w^{*}}^2 \leq \frac{\alpha G^2}{\lambda}\frac{1}{mn} + \frac{\beta^2}{n^{3/2}}
\end{align*}
where $\alpha = 4c^2$ and
\begin{align}
\beta = \max\left \{\frac{cH}{\lambda}, \frac{c\alpha^{3/4}G^{3/2}}{(c-1)\lambda^{5/2}} \left (\frac{\alpha^{1/4}LG^{1/2}}{\lambda^{1/2}} + \frac{4G+HR}{\rho^{3/2}} \right )\right \}
\end{align}
\end{theorem}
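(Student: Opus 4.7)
The natural starting point is the bias--variance decomposition. Because the $m$ local estimators $\w_1^n,\ldots,\w_m^n$ are i.i.d.\ (each node draws independent samples from $\mc{P}$ and executes Algorithm~\ref{algorithm:SGD} independently of the others), a direct expansion gives
\begin{align*}
\E \norm{\bar{\w}^n - \w^*}^2 = \norm{\E[\w_1^n] - \w^*}^2 + \frac{1}{m}\E\norm{\w_1^n - \E[\w_1^n]}^2.
\end{align*}
The $1/m$ factor on the variance piece is the source of the $1/(mn)$ contribution in the theorem, while the bias piece does not shrink with $m$ and must therefore decay fast enough in $n$ to produce the $1/n^{3/2}$ contribution.

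For the variance piece I would invoke the standard SGD analysis for $\lambda$-strongly convex losses with step-size $\eta_t = c/(\lambda t)$ and $c > 1$. Assumption~3 supplies a bounded second (indeed fourth) moment of the stochastic gradient, which is more than enough for the textbook argument giving $\E\norm{\w_1^n - \w^*}^2 = O(c^2 G^2/(\lambda^2 n))$. Combining with $\E\norm{\w_1^n - \E[\w_1^n]}^2 \leq \E\norm{\w_1^n - \w^*}^2$ and tracking constants yields exactly the first term $\alpha G^2/(\lambda m n)$ with $\alpha = 4c^2$.

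The technical core is the bias bound $\norm{\E[\w_1^n] - \w^*}^2 = O(1/n^{3/2})$. My plan is to Taylor-expand each stochastic gradient about the optimum,
\begin{align*}
\nabla f(\w_t;\x_t) = \nabla f(\w^*;\x_t) + \nabla^2 f(\w^*;\x_t)(\w_t - \w^*) + R_t,
\end{align*}
where $\norm{R_t} \leq L(\x_t)\norm{\w_t - \w^*}^2$ by Assumption~2. Substituting into the SGD recursion and taking expectations, the zeroth-order term vanishes because $\nabla F(\w^*) = 0$ and $\x_t$ is independent of $\w_t$, so
\begin{align*}
\E[\w_{t+1} - \w^*] = \bigl(I - \eta_t \nabla^2 F(\w^*)\bigr)\E[\w_t - \w^*] - \eta_t \E[R_t].
\end{align*}
This is a contractive recursion (by strong convexity $\nabla^2 F(\w^*) \succeq \lambda I$) driven by a forcing term whose norm is at most $L\cdot \E\norm{\w_t - \w^*}^2 = O(1/(\lambda^2 t))$ by the variance bound just obtained. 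Unrolling the contraction against the $1/t^2$-scale forcing, together with Polyak--Ruppert averaging of the $\w_t$ (which is why we output $\bar\w_i^n$ rather than the last iterate), propagates to a bias of norm $O(1/n^{3/4})$, i.e., $1/n^{3/2}$ in squared norm.

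The main obstacle is this bias analysis. One must carefully propagate the quadratic Taylor remainder through the entire averaged trajectory, use the fourth-moment bounds in Assumption~3 on $\nabla f$ and on $\nabla^2 f(\w^*;\cdot)$ to control cross-terms that appear when the squared norm of the averaged deviation is expanded, and keep track of explicit dependence on $c$, $H$, $L$, $G$, $\lambda$, and the domain radius $R$ to extract the precise constant $\beta$ stated in the theorem. Once the bias and variance pieces are separately bounded, collecting $\alpha = 4c^2$ and combining the worst-case prefactors into the stated maximum defining $\beta$ is bookkeeping.
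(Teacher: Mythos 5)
First, a point of reference: the thesis does not actually prove this statement --- it is quoted verbatim as Theorem 3 of Zhang, Duchi and Wainwright \cite{ZhangDW:12}, and the surrounding text explicitly defers the argument to the appendix of that paper. So your proposal can only be judged against the original source. At the level of architecture you have reconstructed it correctly: the i.i.d.\ bias--variance split $\E\norm{\bar{\w}^n-\w^*}^2 = \norm{\E[\w_1^n]-\w^*}^2 + \tfrac{1}{m}\E\norm{\w_1^n-\E[\w_1^n]}^2$, the standard strongly convex SGD moment bound $\E\norm{\w_1^n-\w^*}^2 \le \alpha G^2/(\lambda^2 n)$ for the variance term, and a Taylor expansion of the gradient about $\w^*$ to show the bias decays like $n^{-3/4}$ are exactly the three ingredients of the cited proof.

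There is, however, a genuine gap in your treatment of the bias, precisely at the step you flag as the main obstacle. If the only forcing term in the recursion $\E[\w_{t+1}-\w^*] = (I-\eta_t\nabla^2 F(\w^*))\E[\w_t-\w^*] - \eta_t\E[R_t]$ were the Hessian--Lipschitz remainder with $\norm{\E[R_t]} \le L\,\E\norm{\w_t-\w^*}^2 = O(1/t)$, then unrolling the contraction with $\eta_t=c/(\lambda t)$ and $c>1$ gives $\sum_t (t/n)^c\,O(t^{-2}) = O(1/n)$, i.e.\ a squared bias of $O(1/n^2)$ --- strictly better than the claimed $n^{-3/2}$, and with no trace of the $\rho^{-3/2}$, $H$, or $R$ factors that appear in $\beta$. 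So your accounting cannot be where the stated rate comes from. The true bottleneck, which your sketch never addresses, is that Assumptions 2 and 4 are only exploitable on the event that $\w_t$ remains in a neighborhood $U_\rho=\{\w:\norm{\w-\w^*}\le\rho\}$ of the optimum; on the complementary event the Taylor error must be bounded crudely via $\E[\,\cdot\,\mathbf{1}_{E_t^c}] \le (\E[\,\cdot\,^4])^{1/4}\,\Prob(E_t^c)^{3/4}$ together with the Markov bound $\Prob(E_t^c)\le \E\norm{\w_t-\w^*}^2/\rho^2 = O(1/(t\rho^2))$. This H\"older/Markov truncation is what produces a forcing term of order $t^{-3/4}\rho^{-3/2}$, hence the $n^{-3/4}$ bias and the $(4G+HR)/\rho^{3/2}$ term inside $\beta$ --- and it is also why Assumption 3 demands fourth moments. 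Relatedly, Polyak--Ruppert averaging plays no role in generating the $n^{-3/4}$ rate (the original result is stated for the final local iterate), so that part of your explanation is a red herring rather than a load-bearing step.
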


Theorem \ref{thm:savgmres} shows that that algorithm (\ref{algorithm:savgm}) attains the optimal $\mathcal{O}(1/N)$ convergence rate under assumptions $1-4$ when the following the number of compute nodes $m$ satisifies the following condition
\begin{align}
m \leq \left ( \frac{\alpha G^2}{\beta^2 \lambda} \right )^{2/3} \sqrt[3]{N}
\label{eq:machinesCond}
\end{align}
Condition \eqref{eq:machinesCond} on $m$ can be improved to $\mathcal{O}(\sqrt{N})$ if we assume the existence of infinite moments in assumption $3$. 

Theorem \eqref{thm:savgmres} for algorithm \eqref{algorithm:savgm} leads to a distribution free guarantee on the number of machines the dataset can be parititioned into and still have an optimal error guarantee. This is good news for distributed algorithms since one doesn't have to worry about synchronization issues, communication overheads or network failure issues. With $m$ instantiations of algorithm \eqref{algorithm:savgm} such that $m$ satisfies condition \eqref{eq:machinesCond}, and a \textit{averaging-at-the-end} strategy we can get near optimal performance.

As discussed previously in Section \eqref{section:overVi} these error guarantees come at the price of assuming rather well behaved loss functions and data distributions. The proofs of results such as Theorem \eqref{thm:savgmres} rely heavily on higher order Taylor series expansions in turn implying strong smoothness assumptions (See appendix \citep{ZhangDW:12}). This strategy clearly does not bode well for a significant and important part of the machine learning and statistics stratosphere, for e.g. support vector machines and estimation problems involving Huber losses (\cite{huberLoss}) do not satisfy the smoothness assumption ($2$). Even the widely used, once differentiable surrogate for the Hinge loss, the squared Hinge loss fails to satisfy the twice differentiable conditions. 

In Chapter \eqref{Chapter6} we will provide empirical insight into how the convergence rates appears to be dependent on the spectral norm of the data distributions. The theoretical analysis supporting this observation is currently incomplete and we hope to we able to prove the result in a future work.

\section{Experimental Setup}

\subsection{Data sets and Network Settings} 

\begin{table}[t]
\centering
\caption{Data sets and parameters for experiments}
\begin{tabular}{l|c|c|c|c|c}
data set & training  & test  & dim. & $\lambda$ & $\specnorm^2$ \\
\hline
\rcv & $781,265$ & $23,149$ & $47,236$ & $10^{-4}$ & $0.01$ \\
\astro & $29,882$ & $32,487$ & $99,757$ & $5\times 10^{-5}$ & $0.01$ \\
\aset & $100,000$ & $50,000$ & $500$ & $10^{-4}$ & $0.35$ \\
\ctype & $522,911$ & $58,001$ & $47,236$ & $10^{-6}$ & $0.21$ 
\end{tabular}
\end{table}

The data sets used in our experiments are summarized in Table \ref{tab:data}. \ctype \ is the forest covertype dataset~\cite{covertype} used in \cite{SSSC11:pegasos} obtained from the UC Irvine Machine Learning Repository~\cite{Lichman:2013}, \astro \ is comprised of abstracts of papers from physics also of \cite{SSSC11:pegasos}, rcv1 is from the Reuters collection obtained from libsvm collection \cite{ChangL:11libsvm}. Alpha is a dense dataset obtained from the Pascal large scale learning challenge~\cite{alpha}.  The \rcv \ and \astro \ data sets have small values of $\hat{\rho}^2$, whereas \aset \ and  \ctype \ have larger values of $\hat{\rho}^2$. \label{tab:data} In all the experiments we looked at $\ell_2$-regularized classification objectives for problem \eqref{eq:optForm}.   Each plot is averaged over $5$ runs except where specified.


The data consists of pairs $\{(\x_1,y_1), \ldots, (\x_N,y_N)\}$ where $\x_i \in \mathbb{R}^d$ and $y_i \in \{-1,+1\}$.   We performed experiments on two objectives.  To analyze the effect of communication chose the hinge loss $\ell(\w^{\trans}\x ) = (1 - \w^{\trans} \x y)_{+}$.  
The values of the regularization parameter $\reg$ are chosen from to be the same as those in Shalev-Shwarz et al.~\cite{SSSC11:pegasos}. 

We simulated networks of compute nodes of varying size ($m$) arranged in a $k$-regular graph with $k = \floor{0.25m}$ or a fixed degree ($k=20$). Note that the dependence of the convergence rate of consensus based optimization procedures on the properties of the underlying network has been investigated before and we refer the reader to Agarwal and Duchi~\cite{AgarwalD:11nips} for more details. In this paper we experiment only with $k$-regular graphs. The weights on the Markov matrix $\bP$ are set by the max-degree random walk \citep{DiaconisBoyd}).
	\[
	P_{ij} = \left\{ \begin{array}{ll}
		\min\{ 1/d_i, 1/d_j \} & (i,j) \in \mc{E} \\
		\sum_{(i,k) \in \mc{E}} \max\{ 0, 1/d_i - 1/d_k\} & i = j \\
		0 & (i,j) \notin \mc{E}
		\end{array}
		\right.
	\]
where $d_i$ is the degree of node $i$.  
Each node is randomly assigned $n=\floor{N/m}$ points. For better performance we could optimize the weights on each edge~\citep{DoubStoch::nips09}.

\section{Summary}

In this chapter we introduced different computational paradigms for SGD including consensus, mini batching and one shot averaging. We described the problem setup and discussed existing works with their limitations, setting up the analysis and discussion in the subsequent chapters.



\chapter{Preliminary Results} 
\label{Chapter2}
\lhead{Chapter 2. \emph{Preliminary Results}} 

In this chapter we establish results related to spectral norm of randomly sampled principal gram submatrices. These will find a place in the analysis of algorithms presented in the subsequent chapters. We delegate a chapter to these results as they stand on their own and are potentially useful, even outside the confines of this thesis.

The first part of this chapter describes results that bound the spectral norm of submatrices of inner products of points sampled from some arbitrary distribution $\mc{P}$. We look at both sampling with and without replacement methods.

\section{Spectral Norm of Sampled Gram Submatrices}

In this section we establish bounds on expected spectral norm of principal submatrices of points sampled from a distribution $\mc{P}$ in terms of the spectral norm of the covariance matrix of the distribution. These results form the backbone of the convergence proofs to be presented in the later chapters. Though they are general enough in the current form to be used in other applications.

\subsection{Bound on Principal Gram Submatrices }

We establish the following inequality which follows by applying the Matrix Bernstein inequality of Tropp~\cite{Tropp:12tail}.


\begin{theorem}\label{theorem:specnorm}
Let $\mc{P}$ be a distribution on $\mathbb{R}^d$ with second moment matrix $\mbs{\Sigma} = \E_{\Y \sim \mc{P}}[ \Y \Y^{\trans}]$ such that $\norm{\Y_k} \le 1$ almost surely.  Let $\rho^2 = \spec(\mbs{\Sigma})$.  Let $\Y_1, \Y_2, \ldots, \Y_K$ be an i.i.d. sample from $\mc{P}$ and let
	\[
	\Q_K = \sum_{k=1}^{K} \Y_k \Y_k^{\trans}
	\]
be the empirical second moment matrix of the data.  Then for $K > \frac{4}{3 \rho^2} \log d$,
	\begin{align}
	\E\left[ \frac{ \spec( \Q_K ) }{K} \right] 
	&\le 5 \rho^2 
	\end{align}
and for $K > \max\{ \frac{4}{3 \rho^2} \log d, \frac{8 \sqrt{2}}{3 \rho^2} \sqrt{d} \}$,
	\begin{align}
	\E\left[ \frac{ \spec( \Q_K )^2 }{K^2} \right] 
	&\le 14 \rho^4.
	\end{align}	
\end{theorem}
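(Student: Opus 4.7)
The plan is to reduce everything to matrix Bernstein applied to a centered sum, and then to use the triangle inequality for the operator norm.

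\textbf{Centering and Bernstein hypotheses.} Define $X_k = \Y_k \Y_k^\top - \mbs{\Sigma}$, so the $X_k$ are i.i.d., symmetric, mean-zero, and $\Q_K = K\mbs{\Sigma} + \sum_{k=1}^K X_k$. Since $\|\Y_k\| \le 1$ almost surely, $\rho^2 = \sigma_1(\mbs\Sigma) \le \mathrm{tr}(\mbs\Sigma) = \E\|\Y\|^2 \le 1$, hence $\|X_k\| \le \|\Y_k\|^2 + \rho^2 \le 2$. For the matrix variance, the key inequality $\E[(\Y\Y^\top)^2] = \E[\|\Y\|^2\,\Y\Y^\top] \preceq \E[\Y\Y^\top] = \mbs{\Sigma}$ gives $\E X_k^2 \preceq \mbs{\Sigma}$, and therefore $V := \bigl\|\sum_{k=1}^K \E X_k^2\bigr\| \le K\rho^2$.

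\textbf{First-moment bound.} The standard expectation form of matrix Bernstein yields
\[
\E\Bigl\|\sum_k X_k\Bigr\| \;\le\; \sqrt{2K\rho^2 \log(2d)} \;+\; \tfrac{2}{3}\log(2d).
\]
Combined with the triangle inequality $\sigma_1(\Q_K) \le K\rho^2 + \|\sum_k X_k\|$, dividing by $K$ gives three terms: $\rho^2$, $\sqrt{2\rho^2\log(2d)/K}$, and $(2/3)\log(2d)/K$. Under the hypothesis $K > \tfrac{4}{3\rho^2}\log d$, the second and third terms are each bounded by an absolute multiple of $\rho^2$, and summing produces the claimed constant $5$.

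\textbf{Second-moment bound.} I would use the triangle inequality in squared form,
\[
\sigma_1(\Q_K)^2 \;\le\; 2K^2\rho^4 + 2\Bigl\|\sum_k X_k\Bigr\|^2,
\]
so the task becomes bounding $\E M^2$ where $M = \|\sum_k X_k\|$. The route I would take is tail integration:
\[
\E M^2 = \int_0^\infty 2t\,\Pr[M \ge t]\,dt,
\]
with the Bernstein tail $\Pr[M \ge t] \le 2d\exp\bigl(-t^2/(2V + 4t/3)\bigr)$. I would split the integral at the cutoff $t^\ast$ where the bound first drops below $1$, using $\Pr \le 1$ trivially below $t^\ast$ and the exponential tail above. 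Handling the two regimes of Bernstein (sub-Gaussian for $t \lesssim V$ and sub-exponential for $t \gtrsim V$) separately, one gets $\E M^2 = O\bigl(V + (\text{dimension factor})\bigr)$, which after division by $K^2$ combines with the lower-bound assumption on $K$ to give $O(\rho^4)$ with an explicit constant; tracking the constants should yield $14$.

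\textbf{Main obstacle.} The delicate step is the second moment: the specific threshold $K > \tfrac{8\sqrt{2}}{3\rho^2}\sqrt d$ in the statement is stronger than what the sharp $\log d$-type integration of Bernstein would naively predict, and matching it will require either a looser but cleaner intermediate step — for instance, passing through $\|\cdot\|^2 \le \|\cdot\|_F^2$ or a Chebyshev-type bound of the form $\Pr[M^2 \ge u] \le \mathrm{tr}\bigl(\sum \E X_k^2\bigr)/u$ for part of the integration range — or a careful split between a $d$-dependent and a $\log d$-dependent regime. The other source of friction is simply bookkeeping: carrying the absolute constants through the two integration regimes cleanly enough to land at the quoted constants ($5$ for the first moment, $14$ for the second).
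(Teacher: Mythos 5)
Your core strategy is the paper's: center to $\X_k = \Y_k\Y_k^{\trans} - \mbs{\Sigma}$, control $\lambda_{\max}(\X_k)$ and the matrix variance, and invoke Tropp's matrix Bernstein inequality. (Your variance bound $\E[\X_k^2]\preceq\mbs{\Sigma}$, hence $V \le K\rho^2$, is in fact slightly tighter than the paper's $2K\rho^2$.) The execution differs only in how Bernstein is converted into moment bounds. For the first moment the paper integrates the two-regime tail of $\spec(\Q_K)/K - \rho^2$ directly, starting the integral at $3\rho^2$ so that only the subexponential branch is ever used; you instead apply the expectation form of Bernstein and the triangle inequality, and under $K > \frac{4}{3\rho^2}\log d$ your three terms sum to under $4\rho^2 \le 5\rho^2$, so that part closes. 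Your diagnosis of the $\sqrt{d}$ condition is also correct: the paper needs it only because it bounds $\int_{2\rho^2}^\infty r'\, d\, e^{-3Kr'/8}\,dr'$ by the full integral $\tfrac{64d}{9K^2}$, discarding the damping factor $e^{-3K\rho^2/4}\le d^{-1}$ that the hypothesis on $K$ already supplies; a more careful integration would need only the $\log d$ condition.

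The one place your route has a real problem is the second-moment constant. Writing $\spec(\Q_K)^2 \le 2K^2\rho^4 + 2M^2$ with $M=\norm{\sum_k\X_k}$ forces you to show $\E[M^2]/K^2 \le 6\rho^4$, and the factor of $2$ in front of $\E[M^2]$ is expensive: the square of the threshold $t_0 = \sqrt{2V\log(2d)} + \tfrac{2R}{3}\log(2d)$ at which the Bernstein tail first drops below one, divided by $K^2$, already evaluates under your hypotheses to at least about $7\rho^4$ even with the sharper almost-sure bound $\norm{\X_k}\le 1$ (the cross term alone contributes roughly $3.5\rho^4$), before any tail integral is added; after doubling and adding the $2\rho^4$ you are already past $14\rho^4$. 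The paper avoids this by integrating $\Prob\left(\spec(\Q_K)/K \ge \sqrt{x}\right)$ directly from $x = 9\rho^4$, so that $9\rho^4$ is obtained for free and only $5\rho^4$ must be extracted from the tail, which the $\sqrt{d}$ condition then delivers. Your approach certainly yields $\E[\spec(\Q_K)^2/K^2] = O(\rho^4)$, but to land on the stated constant you would need to abandon the $2a^2+2b^2$ split and integrate the tail of $\spec(\Q_K)/K$ itself, as the paper does.
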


\begin{proof}
Let $\Y$ be the $d \times K$ matrix whose columns are $\{\Y_k\}$.  Define $\X_k = \Y_k \Y_k^{\trans} - \mbs{\Sigma}$.  Then $\E[ \X_k ] = \mbf{0}$ and
	\begin{align*}
	\lmax(\X_k) &= \lmax\left( \Y_k \Y_k^{\trans} - \mbs{\Sigma} \right) \\
	&\le \norm{ \Y_k }^2 \\
	&\le 1,
	\end{align*}
because $\mbs{\Sigma}$ is positive semidefinite and $\norm{ x_i } \le 1$ for all $i$.  Furthermore,
	\begin{align*}
	\spec\left( \sum_{k=1}^{K} \E\left[ \X_k^2 \right] \right) 
	&= t \spec\left(  
		\E\left[ \Y_k \Y_k^{\trans} \Y_k \Y_k^{\trans} \right] - \mbs{\Sigma}^2 \right) \\
	&\le t \spec\left( \E\left[ \norm{Y_k}^2 \Y_k \Y_k^{\trans} \right] \right) 
		+ t \spec\left( \mbs{\Sigma} \right)^2 \\
	&\le t (\rho^2 + \rho^4) \\
	&\le 2 t \rho^2
	\end{align*}
since $\rho \le 1$.
	
Applying the Matrix Bernstein inequality of Tropp~\cite[Theorem 6.1]{Tropp:12tail}
	\begin{align}
	\Prob\left( \spec\left( \sum_{k=1}^{K} \mbf{X}_k \right) \ge r \right) 
	&\le \left\{ 
		\begin{array}{ll}
		d \exp\left( - 3 r^2/( 16 K \rho^2) \right) & r/K \le 2 \rho^2 \\
		d \exp\left( - 3 r/8 \right) & r/K \ge 2 \rho^2
		\end{array}
		\right.
	\label{eq:bernstein1}
	\end{align}
Now, note that
	\begin{align*}
	\spec\left( \sum_{k=1}^{K} \X_k  \right) = \spec\left( \sum_{k=1}^{K} \Y_k \Y_k^{\trans} - \mbs{\Sigma}  \right),
	\end{align*}
so $\spec\left( \sum_{k=1}^{K} \mbf{X}_k \right) \ge r$ is implied by 
	\[
	\left| \frac{1}{t} \spec\left( \sum_{k=1}^{K} \Y_k \Y_k^{\trans} \right) - \spec\left( \mbs{\Sigma}  \right) \right| \ge r.  
	\]
Therefore
	\begin{align}
	\Prob\left( \left| \frac{ \spec(\Q_K) }{ K } - \rho^2 \right| \ge r' \right)  \
	&\le \left\{ 
		\begin{array}{ll}
		d \exp\left( - 3 K r'^2/( 16 \rho^2) \right) & r' \le 2 \rho^2 \\
		d \exp\left( - 3 K r'/8 \right) & r' \ge 2 \rho^2
		\end{array}
		\right.
	\label{eq:bernstein2}
	\end{align}
Integrating \eqref{eq:bernstein2} yields
	\begin{align*}
	\E\left[ \frac{ \spec( \Q_K ) }{K} \right]
	&= \int_{0}^{\infty} \Prob\left( \frac{ \spec(\Q_K) }{ K } \ge x \right) dx \\
	&\le 3 \rho^2 + \int_{3 \rho^2}^{\infty} \Prob\left( \frac{ \spec(\Q_t) }{ K } - \rho^2 \ge x - \rho^2 \right) dx \\
	&\le 3 \rho^2 + \int_{2 \rho^2}^{\infty} \Prob\left( \frac{ \spec(\Q_t) }{ K } - \rho^2 \ge r' \right) dr' \\
	&\le 3 \rho^2 + \int_{2 \rho^2}^{\infty} d \exp\left( - \frac{3}{8} K r' \right) dr' \\
	&= 3 \rho^2 + \frac{8}{3} \cdot \frac{d}{K} \exp\left( - \frac{3}{4} \rho^2 K \right)
	\end{align*}
For $K > \frac{4}{3 \rho^2} \log d$,
	\begin{align*}
	\E\left[ \frac{ \spec( \Q_K ) }{K} \right] 
	&\le 3 \rho^2 + \frac{8}{3} \cdot \frac{3}{4} \cdot \frac{\rho^2}{\log d} \\
	&\le 5 \rho^2.
	\end{align*}	
	
Turning to the second inequality,
	\begin{align*}
	\E\left[ \frac{ \spec( \Q_K )^2 }{K^2} \right]
	&= \int_{0}^{\infty} \Prob\left( \frac{ \spec(\Q_K) }{ K } \ge \sqrt{x} \right) dx \\
	&\le 9 \rho^4 + \int_{9 \rho^4}^{\infty} \Prob\left( \frac{ \spec(\Q_K) }{ K } \ge \sqrt{x} \right) dx \\
	&= 9 \rho^4 + \int_{3 \rho^2}^{\infty} \Prob\left( \frac{ \spec(\Q_K) }{ K } \ge y \right) 2 y dy \\
	&= 9 \rho^4 + \int_{3 \rho^2}^{\infty} \Prob\left( \frac{ \spec(\Q_K) }{ K } - \rho^2 \ge y - \rho^2 \right) 2 y dy \\
	&= 9 \rho^4 + \int_{2 \rho^2}^{\infty} \Prob\left( \frac{ \spec(\Q_K) }{ K } - \rho^2 \ge r' \right) 2 (r' + \rho^2) dy \\
	&\le 9 \rho^4 + 2 d \int_{2 \rho^2}^{\infty} r' \exp\left( - 3 K r'/8 \right ) dr' 
		+ 2 \rho^2 d \int_{2 \rho^2}^{\infty} \exp\left( - 3 K r'/8 \right ) dr' \\
	&\le 9 \rho^4 + \frac{128 d}{9 K^2} 
		+ \frac{16}{3} \rho^2 \cdot \frac{d}{K} \exp\left( - \frac{3}{4} \rho^2 K \right) \\
	\end{align*}
where in the last line we used the formula for the mean of an exponential random variable.  This shows that for $K > \max\{ \frac{4}{3 \rho^2} \log d, \frac{8 \sqrt{2}}{3 \rho^2} \sqrt{d} \}$,
	\begin{align*}
	\E\left[ \frac{ \spec( \Q_K )^2 }{K^2} \right] &\le 14 \rho^4.
	\end{align*}
	
	Proceeding similarly we can show that there exists a fixed constant $c_0$ such that
	\begin{align}
	\E \left [ \frac{\spec(\Q_K)^4}{K^4} \right ] \leq c_0 \rho^8
	\end{align}
\end{proof}

\subsection{Bound on Principal Gram Submatrices - Intrinsic Dimension }

Often the data we work with resides in a low dimensional subspace and the effective or the intrinsic dimensionality of the data is much less than the ambient dimension $d$. To get tighter bounds then it then becomes necessary to incorporate some notion of intrinsic dimension. 
We work with the following defintion of intrinsic dimension (from \cite{Tropp:12tail}) for a positive semi-definite matrix $\mathbf{A}$.
\begin{align}
intdim(\mathbf{A}) = \frac{tr(\mathbf{A})}{\norm{\mathbf{A}}}
\end{align}

The proof applies the intrinsic dimension version of Matrix Bernstein inequality of Tropp~\cite{Tropp:12tail}.

\begin{theorem}\label{theorem:specnormIntdim}
Let $\mc{P}$ be a distribution on $\mathbb{R}^d$ with second moment matrix $\mbs{\Sigma} = \E_{\Y \sim \mc{P}}[ \Y \Y^{\trans}]$ such that $\alpha \le \norm{\Y_k} \le 1$ almost surely for some $\alpha>\rho$.  Let $\rho^2 = \spec(\mbs{\Sigma})$.  Let $\Y_1, \Y_2, \ldots, \Y_K$ be an i.i.d. sample from $\mc{P}$ and let
	\[
	\Q_K = \sum_{k=1}^{K} \Y_k \Y_k^{\trans}
	\]
be the empirical second moment matrix of the data.  Then for $K > \frac{4}{\rho^2} \log\frac{11}{\rho^2(\alpha^2-\rho^2)}$,
	\begin{align}
	\E\left[ \frac{ \spec( \Q_K ) }{K} \right] 
	&\le 5 \rho^2 
	\end{align}
\end{theorem}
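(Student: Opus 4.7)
The plan is to mirror the proof of Theorem \ref{theorem:specnorm} step by step, but replace the standard Matrix Bernstein tail bound with the intrinsic-dimension version from Tropp (Theorem 7.3.1 of \cite{Tropp:12tail}), in which the dimension prefactor $d$ is replaced by a constant multiple of $\mathrm{intdim}(\sum_k \E[\X_k^2])$. Setting $\X_k = \Y_k \Y_k^{\trans} - \mbs{\Sigma}$ gives a zero-mean sequence with $\lmax(\X_k) \le 1$ and, exactly as before, $\spec\bigl(\sum_{k=1}^K \E[\X_k^2]\bigr) \le 2 K \rho^2$. These uniform and variance bounds enter the exponent of Bernstein's inequality identically to the previous proof; the only new ingredient is the estimate of the intrinsic dimension of $\mathbf{A} := \sum_{k=1}^K \E[\X_k^2] = K\bigl(\E[\|\Y\|^2 \Y \Y^{\trans}] - \mbs{\Sigma}^2\bigr)$.

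For the numerator of $\mathrm{intdim}(\mathbf{A}) = \tr(\mathbf{A})/\spec(\mathbf{A})$, I use $\tr(\mathbf{A}) \le K \E[\|\Y\|^4] \le K$ since $\|\Y\|\le 1$ almost surely (dropping the nonnegative $\tr(\mbs{\Sigma}^2)$ term only loosens the bound). For the denominator, this is the step that actually uses the new assumption $\|\Y\| \ge \alpha$: pick $u$ to be a unit top eigenvector of $\mbs{\Sigma}$, so that $u^{\trans}\mbs{\Sigma} u = \rho^2$ and $u^{\trans}\mbs{\Sigma}^2 u = \rho^4$, and bound
\begin{align*}
u^{\trans} (\mathbf{A}/K) u = \E\bigl[\|\Y\|^2 (u^{\trans}\Y)^2\bigr] - u^{\trans}\mbs{\Sigma}^2 u \ge \alpha^2 \rho^2 - \rho^4 = \rho^2(\alpha^2 - \rho^2).
\end{align*}
Combining gives $\mathrm{intdim}(\mathbf{A}) \le 1/\bigl(\rho^2(\alpha^2 - \rho^2)\bigr)$, which is precisely the quantity appearing inside the logarithm in the statement and which explains why the hypothesis $\alpha > \rho$ is needed.

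Once this is in hand, the analogue of inequality \eqref{eq:bernstein2} holds with the prefactor $d$ replaced by a constant times $1/\bigl(\rho^2(\alpha^2 - \rho^2)\bigr)$, and the final calculation is purely mechanical: integrate the tail
\begin{align*}
\E\!\left[ \frac{\spec(\Q_K)}{K} \right] \le 3 \rho^2 + \int_{2\rho^2}^{\infty} \Prob\!\left( \frac{\spec(\Q_K)}{K} - \rho^2 \ge r' \right) dr',
\end{align*}
substitute the intrinsic-dimension Bernstein bound, and evaluate the resulting exponential integral as before. The threshold on $K$ is then chosen so that the residual term is at most $2\rho^2$; balancing $\frac{c}{\rho^2(\alpha^2-\rho^2) K}\exp(-\tfrac{3}{4}\rho^2 K)$ against $\rho^2$ yields the stated $K > \frac{4}{\rho^2}\log\frac{11}{\rho^2(\alpha^2-\rho^2)}$ after folding the multiplicative constant (here $11$) into the logarithm. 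The only non-routine obstacle I anticipate is tracking the intrinsic-dimension Bernstein constants carefully enough to land on exactly the constants $5$ and $11$ in the statement; the structural work — identifying the correct lower bound $K\rho^2(\alpha^2-\rho^2)$ on $\spec(\mathbf{A})$ — is the single genuine step beyond Theorem \ref{theorem:specnorm}.
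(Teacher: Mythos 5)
Your proposal matches the paper's proof essentially step for step: same decomposition $\X_k = \Y_k\Y_k^{\trans} - \mbs{\Sigma}$, same uniform and variance bounds feeding the intrinsic-dimension Matrix Bernstein inequality, and the same key new step of lower-bounding $\spec\bigl(\sum_k \E[\X_k^2]\bigr)$ by $K\rho^2(\alpha^2-\rho^2)$ to conclude $\mathrm{intdim} \le 1/\bigl(\rho^2(\alpha^2-\rho^2)\bigr)$. The only cosmetic difference is that you obtain that lower bound by evaluating the quadratic form at the top eigenvector of $\mbs{\Sigma}$, whereas the paper uses the reverse triangle inequality $\norm{\mathbf{A}}-\norm{\mathbf{B}} \le \norm{\mathbf{A}-\mathbf{B}}$ together with $\E[\norm{\Y}^2\Y\Y^{\trans}] \succeq \alpha^2\mbs{\Sigma}$; both yield the identical bound.
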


\begin{proof}
Let $\Y$ be the $d \times K$ matrix whose columns are $\{\Y_k\}$.  Define $\X_k = \Y_k \Y_k^{\trans} - \mbs{\Sigma}$.  Then $\E[ \X_k ] = \mbf{0}$ and
	\begin{align*}
	\lmax(\X_k) &= \lmax\left( \Y_k \Y_k^{\trans} - \mbs{\Sigma} \right) \\
	&\le \norm{ \Y_k }^2 \\
	&\le 1,
	\end{align*}
because $\mbs{\Sigma}$ is positive semidefinite and $\norm{ x_i } \le 1$ for all $i$. 

Let us define $\Z = \sum_{k} \X_k$ then we have
\begin{align}
\norm{\E[\Z^2]} = K\norm{\E[\X_k^2]}
\end{align}

using $\norm{\mathbf{A}} - \norm{\mathbf{B}} < \norm{\mathbf{A}-\mathbf{B}}$ we get a lower bound for $\norm{\E[\Z^2]}$ as follows
\begin{align}
K\norm{\E[\X_k^2]} &\geq K \left ( \norm{\E \left [ \norm{Y_k}^2\Y_k \Y_k^{\trans} \right ]} -  \norm{\E \left [ \Y_k \Y_k^{\trans}\right ]}^2\right ) \notag \\
&= K (\alpha^2\rho^2 - \rho^4)
\end{align}

This gives us
\begin{align}
\label{eq:intdimBnd}
intdim(\E[\Z^2]) &= \frac{tr(\E[\Z^2])}{\norm{\E[\Z^2]}} \leq K \frac{tr \left(  \E \left [ \Y_k \Y_k^{\trans} \right ] - \E \left [ \Y_k \Y_k^{\trans} \right ]^2 \right) } {K (\alpha\rho^2 - \rho^4)} \notag \\
&\leq \frac{1-tr\left (\E \left [ \Y_k \Y_k^{\trans} \right ]^2 \right)}{\alpha^2\rho^2 - \rho^4} \notag \\
&\leq \frac{1}{\alpha^2\rho^2 - \rho^4}
\end{align}

Let us define $d=d(\E[\Z^2]) = intdim(\E[\Z^2])$ then we have
	
Applying the intrinsic dimension Matrix Bernstein inequality of Tropp~\cite[Theorem 7.3.1]{Tropp:12tail}
	\begin{align}
	\Prob\left( \spec\left( \sum_{k=1}^{K} \mbf{X}_k \right) \ge r \right) 
	&\le \left\{ 
		\begin{array}{ll}
		4d \exp\left( - 3 r^2/( 16 K \rho^2) \right) & r/K \le 2 \rho^2 \\
		4d \exp\left( - 3 r/8 \right) & r/K \ge 2 \rho^2
		\end{array}
		\right.
	\label{eq:bernstein1}
	\end{align}
Now, note that
	\begin{align*}
	\spec\left( \sum_{k=1}^{K} \X_k  \right) = \spec\left( \sum_{k=1}^{K} \Y_k \Y_k^{\trans} - \mbs{\Sigma}  \right),
	\end{align*}
so $\spec\left( \sum_{k=1}^{K} \mbf{X}_k \right) \ge r$ is implied by 
	\[
	\left| \frac{1}{t} \spec\left( \sum_{k=1}^{K} \Y_k \Y_k^{\trans} \right) - \spec\left( \mbs{\Sigma}  \right) \right| \ge r.  
	\]
Therefore
	\begin{align}
	\Prob\left( \left| \frac{ \spec(\Q_K) }{ K } - \rho^2 \right| \ge r' \right)  \
	&\le \left\{ 
		\begin{array}{ll}
		4d \exp\left( - 3 K r'^2/( 16 \rho^2) \right) & r' \le 2 \rho^2 \\
		4d \exp\left( - 3 K r'/8 \right) & r' \ge 2 \rho^2
		\end{array}
		\right.
	\label{eq:bernstein2}
	\end{align}
Integrating \eqref{eq:bernstein2} yields
	\begin{align*}
	\E\left[ \frac{ \spec( \Q_K ) }{K} \right]
	&= \int_{0}^{\infty} \Prob\left( \frac{ \spec(\Q_K) }{ K } \ge x \right) dx \\
	&\le 3 \rho^2 + \int_{3 \rho^2}^{\infty} \Prob\left( \frac{ \spec(\Q_t) }{ K } - \rho^2 \ge x - \rho^2 \right) dx \\
	&\le 3 \rho^2 + \int_{2 \rho^2}^{\infty} \Prob\left( \frac{ \spec(\Q_t) }{ K } - \rho^2 \ge r' \right) dr' \\
	&\le 3 \rho^2 + \int_{2 \rho^2}^{\infty} 4d \exp\left( - \frac{3}{8} K r' \right) dr' \\
	&= 3 \rho^2 + \frac{32}{3} \cdot \frac{d}{K} \exp\left( - \frac{3}{4} \rho^2 K \right) \notag \\
	&=  3 \rho^2 + 11 \cdot \frac{d}{K} \exp\left( - \frac{3}{4} \rho^2 K \right)
	\end{align*}
For $K > \frac{4}{3 \rho^2} \log (11d)$,
	\begin{align*}
	\E\left[ \frac{ \spec( \Q_K ) }{K} \right] 
	&\le 3 \rho^2 + 2\rho^2 \\
	&\le 5 \rho^2
	\end{align*}	
	
Finally using bound \eqref{eq:intdimBnd} gives us the required result.
	
\end{proof}
 
\emph{Remark:} The intrinsic dimension Lemma no longer depends on the dimension of the ambient space ($d$) and gives us a weaker requirement on the number of points to be sampled for the upper bound to be true. 
 
\subsection{Sampling Without Replacment Bound}
\begin{table}[t!]
{
\centering
\footnotesize
\begin{tabular}{|c|r|r|r|r|l}
\hline
& \multicolumn{2}{c|}{\textbf{Astro-ph}}& \multicolumn{2}{c|}{\textbf{Covertype}} \\
 Batch Size($K$) & \multicolumn{1}{|c}{$\E[\norm{\Q_K}]$} &
 \multicolumn{1}{c|}{$\beta_K$} &
 \multicolumn{1}{|c}{$\E[\norm{\Q_K}]$} &
 \multicolumn{1}{c|}{$\beta_K$} 
 \\ \hline \hline
 $32$ & 1.48  & 1.46 & 7.97 & 7.49
 \\
 $256$ & 4.776 & 4.75 & 54.57 & 54.36
 \\
 $1024$& 16.01 & 16.05  & 214.97 & 215.1
 \\
 $4096$ & 61.16 & 61.24 & 858.28 & 857.9 \\
 $8192$ & 121.47 & 121.50 & 1715.45 & 1715.13 \\
 \hline
\end{tabular}
\label{tab:dataspec}
\small \caption{Values of $\E[\norm{\Q_K}]$ and $\beta_K$ for two datasets of considerably different sparsity. It can be observed that the two values are very close and hence from a computational perspective best to use the one easier to compute (i.e. $\E[\norm{\Q_K}]$ as discussed before). }
}
\end{table}

In this section we obtain high probability bounds for sampling without replacement. These results provide us an approximation to the spectral norm of a large matrix. For example a (size $K$) block coordinate descent method described in \cite{TakacBRS:13icml} uses the step size $\beta_K \approx 1 + (K-1)\hat{\rho}^2$. For large $N$ the spectral norm is prohibitive to compute and an approximate estimate via the spectral norm of a submatrix of size $K$ is then useful. Additionally empirical values in table 2.1 indicate that these values are indeed close.

Since the block coordinate descent strategy described in \cite{TakacBRS:13icml} samples each block using with replacement strategy  we look at sampling from a fixed Gram matrix $\Q$ of $N$ points with replacement. 

We employ the Matrix Azuma inequality (~\cite[Theorem 7.1]{Tropp:12tail}) which states that
\begin{theorem}
For a finite sequence of $\{\X_k\}$ of self-adjoint matrices in dimension $d$ and a fixed sequence of $\{\mathbf{A}_k\}$ of self-adjoint matrices that satisfy
$\E[\X_k|\X_1,...,\X_{k-1}] = \mathbf{0}$ and $\mathbf{A}_k^2 \succeq \X_k^2$ for a fixed self-adjoint sequence $\mathbf{A}_k$ . Then for all $t\geq 0$ we have
\begin{align*}
P\left ( \lambda_{\max} \left(\Y=\sum_k \X_k \right) \geq t   \right ) \leq d \exp(-t^2/8\sigma^2)
\end{align*}
where $\sigma^2 = \|\sum_k \mathbf{A}_k^2\|$.
\end{theorem}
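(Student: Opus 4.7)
The plan is to use the matrix Laplace transform method pioneered by Ahlswede--Winter and refined by Tropp. The first step is a matrix Markov/Chernoff-type reduction: for any $\theta > 0$,
\begin{align*}
\Prob\bigl(\lambda_{\max}(\Y) \ge t\bigr) \le e^{-\theta t}\,\E\bigl[\operatorname{tr}\exp(\theta \Y)\bigr],
\end{align*}
since $\lambda_{\max}(\Y) \le \log \operatorname{tr} \exp(\Y)$ (monotonicity of the trace exponential). After this, the entire problem reduces to controlling the trace of $\exp\bigl(\theta \sum_k \X_k\bigr)$ for a matrix martingale $\{\X_k\}$.

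The core technical step is peeling off one $\X_k$ at a time. Unlike the scalar case, the $\X_k$'s do not commute so $\exp(\sum_k \theta \X_k) \ne \prod_k \exp(\theta \X_k)$; this is where Lieb's concavity theorem enters. Lieb shows that the map $\mathbf{A} \mapsto \operatorname{tr}\exp\bigl(\mathbf{H} + \log \mathbf{A}\bigr)$ is concave on the PSD cone for any fixed self-adjoint $\mathbf{H}$. Applying this with $\mathbf{H} = \theta \sum_{j<k} \X_j$ and conditional Jensen's inequality with respect to the filtration $\mathcal{F}_{k-1}$ generated by $\X_1,\dots,\X_{k-1}$, one obtains
\begin{align*}
\E\bigl[\operatorname{tr}\exp(\theta\textstyle\sum_{j\le k}\X_j)\,\big|\,\mathcal{F}_{k-1}\bigr] \le \operatorname{tr}\exp\!\bigl(\theta\textstyle\sum_{j<k}\X_j + \log \E[\exp(\theta \X_k)\mid\mathcal{F}_{k-1}]\bigr).
\end{align*}
Iterating and taking full expectation therefore requires only a bound on the conditional cumulant generating function $\log \E[\exp(\theta \X_k)\mid\mathcal{F}_{k-1}]$ in the semidefinite order.

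The matrix Hoeffding/Azuma lemma supplies precisely this: since $\X_k^2 \preceq \mathbf{A}_k^2$ almost surely and $\E[\X_k\mid\mathcal{F}_{k-1}]=\mathbf{0}$, one shows $\log \E[\exp(\theta \X_k)\mid\mathcal{F}_{k-1}] \preceq \tfrac{\theta^2}{2}\mathbf{A}_k^2$ (the factor of $8$ in the final denominator comes from a slightly weaker version of this step that absorbs the non-commutativity of $\X_k$ and $\mathbf{A}_k$). Stringing these bounds together gives
\begin{align*}
\E\bigl[\operatorname{tr}\exp(\theta \Y)\bigr] \le \operatorname{tr}\exp\!\Bigl(\tfrac{\theta^2}{2}\textstyle\sum_k \mathbf{A}_k^2\Bigr) \le d\,\exp\!\bigl(\tfrac{\theta^2}{2}\sigma^2\bigr),
\end{align*}
using $\operatorname{tr}\exp(\mathbf{M}) \le d\cdot\exp(\lambda_{\max}(\mathbf{M}))$ and the definition $\sigma^2 = \|\sum_k \mathbf{A}_k^2\|$.

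Combining with the Markov step and optimizing $\theta = t/\sigma^2$ (with the appropriate numerical constant) produces the stated $d\exp(-t^2/8\sigma^2)$ bound. The main obstacle, and the reason this is substantially more delicate than scalar Azuma, is the semidefinite conditional MGF inequality: the scalar Hoeffding lemma does not transfer to matrices by a naive eigenvalue argument because $\X_k$ and $\mathbf{A}_k$ generally do not share eigenvectors. The standard fix is to express $\exp(\theta \X_k)$ via its convex combination with $\pm \mathbf{A}_k$-valued bounding matrices and use operator monotonicity, but executing this carefully is where the constant $8$ (rather than $2$) appears. Since we are invoking Tropp's theorem as a black box, I would simply cite this step from \cite{Tropp:12tail} and proceed to apply the inequality to the without-replacement sampling martingale of interest.
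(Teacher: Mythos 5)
The paper does not actually prove this statement: it is quoted essentially verbatim from Tropp and invoked as a black box (the text introduces it as ``the Matrix Azuma inequality \cite[Theorem 7.1]{Tropp:12tail}''), the only original content being the subsequent application to the without-replacement sampling martingale. Your sketch is therefore not in competition with a proof in the paper; rather, it is an accurate outline of the argument underlying Tropp's theorem itself: the matrix Laplace-transform reduction $\Prob(\lambda_{\max}(\Y)\ge t)\le e^{-\theta t}\,\E[\operatorname{tr}\exp(\theta \Y)]$, the one-at-a-time peeling of summands via Lieb's concavity theorem together with conditional Jensen (which is precisely where the martingale hypothesis $\E[\X_k\mid\mathcal{F}_{k-1}]=\mathbf{0}$ and the fact that the dominating matrices $\mathbf{A}_k$ are \emph{deterministic} are needed for the iteration to close), and a semidefinite bound on the conditional matrix cumulant generating function. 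That is the right route, and you correctly identify the conditional MGF bound as the genuinely non-scalar step.

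One imprecision is worth flagging. The intermediate inequality you display, $\log\E[\exp(\theta\X_k)\mid\mathcal{F}_{k-1}]\preceq\tfrac{\theta^2}{2}\mathbf{A}_k^2$, together with your bound $\E[\operatorname{tr}\exp(\theta\Y)]\le d\exp(\tfrac{\theta^2}{2}\sigma^2)$, would after optimizing $\theta=t/\sigma^2$ yield $d\exp(-t^2/(2\sigma^2))$ --- a \emph{stronger} conclusion than the theorem claims, and one that is not actually available under the hypothesis $\X_k^2\preceq\mathbf{A}_k^2$ alone, because $\X_k$ and $\mathbf{A}_k$ need not commute. The CGF lemma Tropp proves in this setting is weaker by a constant factor, and it is that weaker lemma which produces the exponent $-t^2/(8\sigma^2)$. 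You acknowledge this in prose but never state the correct intermediate inequality, so the two displayed steps are mutually inconsistent with the final constant. Since you, like the paper, ultimately defer to Tropp for this step, the proposal stands as a citation-plus-sketch; but a self-contained proof would have to replace the $\tfrac{\theta^2}{2}\mathbf{A}_k^2$ bound with the correct (weaker) one and re-optimize $\theta$ accordingly.
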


Applying this theorem to our setting ields the following result
\begin{theorem}
For a random principal submatrix $\Q_K$ of size $K$ sampled from a gram matrix $Q=\X\X^{\trans}$, without replacment, of size $N$ we have that with probability at least $1-\delta$
\begin{align}
\spec(\Q_K) \leq K\rho^2  + \sqrt{8CK\rho^2\log \frac{d}{\delta}}
\end{align} 
where $C$ is a universal constant and $\rho^2 = \spec(\Q)/N$.
\end{theorem}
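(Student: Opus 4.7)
The plan is to reduce to a matrix martingale tail bound via Weyl's inequality. Let $(i_1,\ldots,i_K)$ be the ordered sample drawn uniformly without replacement from $\{1,\ldots,N\}$, set $Z_k = x_{i_k} x_{i_k}^{\trans}$ so that $\Q_K = \sum_{k=1}^{K} Z_k$, and let $\F_k = \sigma(Z_1,\ldots,Z_k)$. Introduce the Doob martingale $M_k = \E[\Q_K \mid \F_k]$, which satisfies $M_0 = (K/N)\Q$ and $M_K = \Q_K$, and write $\X_k = M_k - M_{k-1}$. Weyl's inequality then gives
\begin{align*}
\spec(\Q_K) \le \spec\!\bigl((K/N)\Q\bigr) + \spec\!\Bigl(\sum_{k=1}^{K} \X_k\Bigr) = K\rho^2 + \spec\!\Bigl(\sum_{k=1}^{K} \X_k\Bigr),
\end{align*}
so it suffices to concentrate $\spec(\sum_k \X_k)$ with Matrix Azuma.

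Next I would derive the increments in closed form. Writing $R_{k-1} = \Q - \sum_{j<k} Z_j$ for the residual sum over the not-yet-sampled indices and using exchangeability of the without-replacement sample, a direct computation yields $M_k = \sum_{j\le k} Z_j + \tfrac{K-k}{N-k} R_k$, and combining with $\E[Z_k \mid \F_{k-1}] = R_{k-1}/(N-k+1)$ together with $R_k = R_{k-1}-Z_k$ collapses the difference to the clean form
\begin{align*}
\X_k = \frac{N-K}{N-k}\bigl(Z_k - \E[Z_k \mid \F_{k-1}]\bigr).
\end{align*}
This form makes $\E[\X_k \mid \F_{k-1}] = \mathbf{0}$ automatic and bounds each $\|\X_k\|$ by a constant of order one.

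The main obstacle is producing an envelope $\A_k^2 \succeq \X_k^2$ for Matrix Azuma whose partial sum spectral norm $\sigma^2 = \|\sum_k \A_k^2\|$ scales as $CK\rho^2$ rather than the trivial $O(K)$ one would get from $\|Z_k\|\le 1$. The calculation that drives the $\rho^2$ factor is the variance bound
\begin{align*}
\E[\X_k^2 \mid \F_{k-1}] \preceq \Bigl(\frac{N-K}{N-k}\Bigr)^{\!2} \E[Z_k^2 \mid \F_{k-1}] \preceq \Bigl(\frac{N-K}{N-k}\Bigr)^{\!2} \frac{R_{k-1}}{N-k+1},
\end{align*}
where I have used $Z_k^2 = \|x_{i_k}\|^2 Z_k \preceq Z_k$; after the deterministic replacement $R_{k-1} \preceq \Q$ and the telescoping identity $\sum_k \tfrac{(N-K)^2}{(N-k)^2(N-k+1)} \le \tfrac{K}{N}$, one arrives at $\sum_k \A_k^2 \preceq (K/N)\Q$, whose spectral norm equals $(K/N)\spec(\Q) = K\rho^2$. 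Once such a $\sigma^2 \le CK\rho^2$ is in hand, Matrix Azuma yields $\Pr\bigl(\spec(\sum_k \X_k) \ge t\bigr) \le d\exp(-t^2/(8\sigma^2))$; setting the right-hand side equal to $\delta$ gives $t = \sqrt{8CK\rho^2 \log(d/\delta)}$, and combining with the Weyl decomposition above delivers the claimed inequality.
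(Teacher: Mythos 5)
Your proposal follows essentially the same route as the paper's proof --- center the without-replacement sample into a matrix martingale, apply a matrix concentration inequality, and peel off the deterministic $K\rho^2$ term --- but your execution is more careful at both places where the paper is loose. First, you use the Doob martingale $M_k=\E[\Q_K\mid\F_k]$, which gives the exact decomposition $\Q_K=\tfrac{K}{N}\Q+\sum_k\X_k$ with the clean increments $\X_k=\tfrac{N-K}{N-k}\left(Z_k-\E[Z_k\mid\F_{k-1}]\right)$; the paper instead centers each summand naively and then must bound $\sum_k\spec\left(\E[\Y_k\mid\F_{k-1}]\right)$ by $K\rho^2$, a step it writes by replacing $\tfrac{1}{N+1-k}$ with $\tfrac{1}{N}$, which points the wrong way as an inequality. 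Second, you actually derive the variance proxy $\sigma^2\le K\rho^2$ via $Z_k^2\preceq Z_k$, $R_{k-1}\preceq\Q$, and the bound $\sum_{k=1}^K (N-K)^2/\left((N-k)^2(N-k+1)\right)\le K/N$ (which is correct; it follows by induction on $N$ with equality at $K=1$), whereas the paper simply asserts $\sigma^2\le CK\rho^2$ without ever constructing the envelope matrices $\mathbf{A}_k$.

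The one step that does not close as written is the appeal to Matrix Azuma. That inequality, as quoted in the paper, requires a \emph{fixed} deterministic sequence with $\mathbf{A}_k^2\succeq\X_k^2$ holding surely, whereas what you control is the predictable quadratic variation $\sum_k\E[\X_k^2\mid\F_{k-1}]\preceq\tfrac{K}{N}\Q$. No sure envelope with $\left\|\sum_k\mathbf{A}_k^2\right\|=O(K\rho^2)$ exists in general, since a single increment $\X_k$ can have spectral norm of order one regardless of $\rho$. The correct tool is the matrix Freedman inequality (also due to Tropp), which takes exactly the two inputs you have --- a sure bound $\norm{\X_k}\le 2$ and a deterministic bound on the predictable quadratic variation --- and returns a tail of the form $d\exp\left(-t^2/(2\sigma^2+2Rt/3)\right)$; when $\log(d/\delta)$ is at most a constant multiple of $K\rho^2$ this reduces to the claimed $d\exp\left(-t^2/(8CK\rho^2)\right)$ form, with the slack absorbed into the universal constant $C$. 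This is a repairable substitution of one concentration inequality for its martingale-variance cousin rather than a conceptual flaw, and it is worth noting that the paper's own proof has the identical mismatch, stated even less explicitly.
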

\begin{proof}
Suppose $\mathcal{A} \subset [N]$ sampled without replacment such that $|\mathcal{A}|=K$.

let $\Y_k = \z_k \z_k^{T}$ and $\mathcal{B}_k =\{\mathcal{A}[1],...,\mathcal{A}[k]\}$  then let us compute in the sampling without replacement setting
\begin{align}
\E[\Y_k|\Y_1,...,\Y_{k-1}] = \E[\z_k \z_k^{T} | \Y_1,...,\Y_{k-1}] = \sum_{i \in [N]-\mathcal{B}_k}\frac{\z_i \z_i^{T}}{N+1-k}
\end{align}

Now define a new sequence $\X_k = \Y_k -  \sum_{i \in [N]-\mathcal{B}_k}\frac{\z_i \z_i^{T}}{N+1-k}$ This sequence is clearly a conditionally centered sequence satisfy first assumption in the theorem.

Now for $t = \sqrt{8\sigma^2 \log \frac{d}{\delta}}$ (by setting $t$ equal to the right hand side of the probability inequality), using the fact that $\norm{\mathbf{A}-\mathbf{B}} \geq \norm{\mathbf{A}} - \norm{\mathbf{B}}$ and the triangle inequality for the norm and the fact that the spectral norm of a matrix is always greater than or equal to its submatrix norm, we have that with probability at least $1-\delta$
\begin{align}
t \geq \spec(\Y) &= \spec\left( \sum_k \left ( \z_k \z_k^{T} - \sum_{i \in [N]-\mathcal{B}_k}\frac{\z_i \z_i^{T}}{N+1-k} \right )\right) \notag  \\
 &\geq \spec(\Q_K) - \sum_{k=1}^K \spec\left( \sum_{i \in [N]-\mathcal{B}_k}\frac{\z_i \z_i^{T}}{N+1-k} \right ) \notag \\
 &= \spec(\Q_K) - \sum_{k=1}^K  \frac{\spec\left(\Q_{[N]-\mc{B}_k}\right)}{N+1-K} \notag \\ 
 &\geq \spec(\Q_K) - \sum_{k=1}^K  \frac{\spec\left(\Q\right)}{N} \notag \\ 
 &= \spec(\Q_K) -  K\rho^2 \notag 
\end{align}

Now we need $\mathbf{A}_k$. We have for some universal constant $C$
\begin{align}
\sigma^2=\|\sum_k \mathbf{A}_k^2 \| \leq Cb\rho^2
\end{align}

Combining all the above we get that with probability at least $1-\delta$
\begin{align}
\spec(\Q_K) \leq K\rho^2  + \sqrt{8CK\rho^2\log \frac{d}{\delta}}
\end{align}

\end{proof}

This concludes the statements and the proofs of the Lemmas we will use in the subsequent chapters. 

\section{Summary}

We proposed bounds on spectral norm of randomly sampled gram submatrices. In the ensuing chapters these bounds will lead us to the data-dependence convergence of distributed SGD algorithms.



\chapter{Consensus SGD and Convergence Rates} 
\label{Chapter3}
\lhead{Chapter 3. \emph{Consensus SGD and Convergence Rates}} 

In this chapter we characterize how the spectral norm $\specnorm^2 = \spec( \E_{\hat{\mc{P}}}[ \x \x^{\trans} ])$ of the sample covariance of the data affects the rate of convergence of   stochastic consensus schemes under different communication requirements. Elucidating this dependence can help guide empirical practice by providing insight into when these methods will work well. We prove an upper bound on the suboptimality gap for distributed primal averaging that depends on $\specnorm^2$ as well as the mixing time of the weight matrix associated to the algorithm. Our result shows that networks of size $m<\frac{1}{\specnorm^2}$ gain from parallelization. Moreover in an asymptotic regime with infinite data at every node we show that for twice-differentiable loss functions this network effect disappears and that we gain from additional parallelization.

\textbf{Related Work.} Several authors have proposed distributed algorithms  involving nodes computing local gradient steps and averaging iterates, gradients, or other functions of their neighbors~\citep{nedicDistributedOptimization,dualAveraging,distrStochSubgrOpt}.  By alternating local updates and consensus with neighbors, estimates at the nodes converge to the optimizer of $J(\cdot)$. 
In these works no assumption is made on the local objective functions and they can be arbitrary. Consequently the convergence guarantees do not reflect the setting when the data is homogenous (for e.g. when data has the same distribution), specifically error increases as we add more machines. This is counterintuitive, especially in the large scale regime, since this suggests that despite homogeneity the methods perform worse than the centralized setting (all data on one node). 

We provide a first analysis of a consensus based stochastic gradient method in the homogenous setting  and demonstrate that there exist regimes %
where we benefit from having more machines in any network. We also show that for twice-differentiable losses, having more machines always helps (via a variance reduction) in the infinite data regime, using results of Bianchi et al.~\cite{BianchiFortHachem:13IEEETrans}. 

In contrast to our stochastic gradient based results, data dependence via the Hessian of the objective has also been demonstrated in parallel coordinate descent based approaches of Liu et al.~\cite{LiuWrightReBittSri} and the Shotgun algorithm of Bradley et al.~\cite{BradleyKyrola}. The assumptions differ from us in that the objective function is assumed to be smooth~\cite{LiuWrightReBittSri} or $\mathcal{L}_1$ regularized~\cite{BradleyKyrola}. Most importantly, our results hold for arbitrary networks of compute nodes, while the coordinate descent based results hold only for networks where all nodes communicate with a central aggregator (sometimes referred to as a master-slave architecture, or a star network), which can be used to model shared-memory systems.

We take an approach similar to that of Tak\'{a}\v{c} et al.~\citep{TakacBRS:13icml}, who developed a spectral-norm based analysis of mini-batching for non-smooth functions. We decompose the iterate in terms of the data points encountered in the sample path~\citep{CotterSSS:11nips}. This differs from analysis based on smoothness considerations alone~\citep{CotterSSS:11nips,AgarwalD:11nips,DekelGSX:12mini,ShamirSrebroZhang:14icml} and gives practical insight into how communication (full or intermittent) impacts the performance of these algorithms. Note that our work is fundamentally different in that these other works either assume a centralized setting \citep{CotterSSS:11nips,DekelGSX:12mini,ShamirSrebroZhang:14icml} or implicitly assume a specific network topology (e.g. \citep{ZhangDW:12} uses a star topology). For the main results we only assume strong convexity while the existing guarantees for the cited methods depend on a variety of regularity and smoothness conditions.

 \textbf{Limitation.} In the stochastic convex optimization (see for e.g. \cite{StochConOpt}) setting the quantity of interest is the population objective corresponding to problem \ref{eq:optForm}. When minimizing this population objective our results suggest that adding more machines worsens convergence (See Theorem \ref{theorem:mainThrm}). For finite data our convergence results satisfy the intuition that adding more nodes in an arbitrary network will hurt convergence. The finite homogenous setting is most relevant in settings such as data centers, where the processors hold data which essentially looks the same. 
In the infinite or large scale data setting, common in machine learning applications, this is  counterintuitive since when each node has infinite data, any distributed scheme including one on arbitrary networks shouldn't perform worse than the centralized scheme (all data on one node). Thus our analysis is limited in that it doesn't unify the stochastic optimization and the consensus setting in a completely satisfactory manner. To partially remedy this we explore distributed primal averaging for smooth strongly convex objectives in the asymptotic regime and show that one can gain from adding more machines in any network.

In this chapter we focus on a simple and well-studied protocol~\cite{nedicDistributedOptimization}. However, our analysis approach and insights may yield data-dependent bounds for other more complex algorithms such as distributed dual averaging~\cite{dualAveraging}. 
More sophisticated gradient averaging schemes such as that of Mokhtari and Ribeiro~\cite{MokhtariR:16jmlr} can exploit dependence across iterations~\cite{ShiLWY:15extra,SchmidLF:2015} to improve the convergence rate; analyzing the impact of the data distribution is considerably more complex in these algorithms.

These results provide a first step towards understanding data-dependent bounds for distributed stochastic optimization in settings common to machine learning. Our analysis coincides with phenomenon seen in practice: for data sets with small $\specnorm$, distributing the computation across many machines is beneficial, but for data with larger $\specnorm$ more machines is not necessarily better. We provide upper bounds on the gap between the iterates and the optimal solution: these bounds do not immediately yield parameters for practical use, but our work does suggest that taking into account the data dependence can improve the empirical performance of these methods.

\section{Problem Structure and Model}\label{section:samplingModel}

The data and network setup is as described in Section (\ref{section:prelim}) of Chapter \eqref{Chapter1}.

\subsubsection{Sampling Model}

We assume the $N$ data points are divided evenly among the $m$ nodes, and define $n \eqdef N/m$ to be the number of points at each node.   Let $S_j$ be the subset of $n$ points at node $j$.  The local stochastic gradient procedure consists of each node $j \in [m]$ sampling from $S_j$ with replacement.  This is an approximation to the local objective function
\begin{align}
J_j(\w) = \sum_{i \in S_j} \frac{\ell_i(\w^{\trans} \x_i)}{n} + \frac{\reg}{2} \norm{\w}^2.
\end{align}

\subsection{Algorithm}

\begin{algorithm}[!htp]
   \caption{Consensus Strongly Convex Optimization}
   \label{alg:DiSCO}
\begin{algorithmic}
   \STATE {\bfseries Input:} $\{\x_i\}_{i=1}^N$, $\reg> 0$, $T \geq 1$ 
   \STATE
   \STATE \COMMENT{Each $i \in [m]$ executes}
   \STATE {\bfseries Initialize:} set $\w_i(1) = {\bf 0} \in \R^d$.
   \FOR{$t=1$ {\bfseries to} $T$}
   \STATE Sample $\x_{i_t}$ uniformly with replacement from $S_i$.
   \STATE Compute $\g_i(t) \in  \partial\ell(\w_i(t)^{\trans}\x_{i_t})\x_{i_t} + \reg \w_i(t)$
   \STATE $\w_i(t+1) = \sum_{j=1}^m \w_j(t)\bP_{ij}(t) - \eta_t \g_i(t)\label{eq:updRule}$
   \ENDFOR
    \STATE {\bfseries Output:} $\bar{\w}_T(i)=\frac{1}{T}\sum_{t=1}^{T} \w_i(t)$ for any $i \in [m]$.
\end{algorithmic}
\end{algorithm}

Algorithm (\ref{alg:DiSCO}) describes the distributed strategy we analyze in the subsequent sections. Every node $i$ samples a point uniformly with replacement from a local pool of $n$ points and then updates its iterate by computing a weighted sum with its neighbors followed by a local subgradient step. The time dependence of the Markov matrix $\bP(t)$ and the step-size indicates that Algorithm \eqref{alg:DiSCO} is a generalized strategy and specific choices of this matrix with the step-size will pave the way for analysis of different strategies for communication. 


Algorithms like \eqref{alg:DiSCO}, also referred to as primal averaging, have been analyzed previously~\cite{nedicDistributedOptimization,distrStochSubgrOpt,DistStronglyConvex}. In these works it is shown that the convergence properties depend on the structure of the underlying network via the second largest eigenvalue of $\bP$.  We consider in this section the case when $\bP(t)=\bP$ for all $t$ where $\bP$ is a fixed Markov matrix.  This corresponds to a synchronous setting where communication occurs at every iteration.

We analyze the use of the step-size $\eta_t=1/(\mu t)$ in Algorithm \ref{alg:DiSCO} and show that the convergence depends on the spectral norm of the covariance matrix of the data distribution $\mc{P}$. Specifically, we prove the following result.
\begin{theorem}
Fix a Markov matrix $\bP$ and let $\specnorm^2 =\sigma_1(\mbf{\hat{\Sigma}})$ denote the spectral norm of the covariance matrix of the data distribution. Consider Algorithm \eqref{alg:DiSCO} when the objective $J(\w)$ is strongly convex, $\bP(t) = \bP$ for all $t$, and $\step_t=1/(\reg t)$.  Let $\lambda_2(\bP)$ denote the second largest eigenvalue of $\bP$.  Then if the number of samples on each machine $n$ satisfies 
	\begin{align}
	n > \frac{4}{3 \rho^2} \log \left( d\right)
	\end{align}
and the number of iterations $T$ satisfies
	\begin{align}
	T &> 2 e  \log(1/\sqrt{ \lambda_2(\bP)}) \\
	\frac{T}{\log(T)} &> \max\left(\frac{4}{3 \rho^2} \log \left( d\right), \frac{ \left( \frac{8}{5} \right)^{\frac{1}{4}} \sqrt{ m/\specnorm }}{\log(1/\lambda_2(\bP))}\right), \label{eq:thm_Tbound}
	\end{align}
then the expected error for each node $i$ satisfies
\begin{align}
\E\left[ J(\hat{\w}_i(T)) -J(\w^{*})  \right] \ \le \ \left(\frac{1}{m} + \frac{100\sqrt{m\specnorm^2}\cdot \log T}{1-\sqrt{\lambda_2(\bP)}} \right)\cdot \frac{L^2}{\reg} \cdot \frac{\log T}{T}.
\label{eq:morecomm:totalerr}
\end{align}
\label{theorem:mainThrm}
\end{theorem}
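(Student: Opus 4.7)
The plan is to exploit the fact that for doubly stochastic $\bP$, the mean iterate $\bar{\w}(t) \eqdef \frac{1}{m}\sum_j \w_j(t)$ evolves like centralized SGD driven by the averaged mini-batch gradient $\bar{\g}(t) \eqdef \frac{1}{m}\sum_j \g_j(t)$; the two summands in \eqref{eq:morecomm:totalerr} then arise from separately controlling the centralized progress of $\bar{\w}$ and the consensus disagreement $\w_i(t)-\bar{\w}(t)$, with the spectral norm $\rho^2$ surfacing through Theorem \eqref{theorem:specnorm} applied to the Gram matrix of the points sampled by the $m$ nodes.

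First I would decompose, by convexity of $J$ and $L$-Lipschitzness,
\begin{align*}
J(\hat{\w}_i(T))-J(\w^*) \le \frac{1}{T}\sum_{t=1}^T\big(J(\bar{\w}(t))-J(\w^*)\big) + \frac{L}{T}\sum_{t=1}^T\E\|\w_i(t)-\bar{\w}(t)\|,
\end{align*}
and apply the standard $\mu$-strongly convex SGD analysis with $\eta_t=1/(\mu t)$ to the recursion for $\bar{\w}$. This yields a bound of order $\frac{\log T}{\mu T}\sup_t\E\|\bar{\g}(t)\|^2$ plus a disagreement-induced bias of order $L\cdot\max_{j,t}\E\|\w_j(t)-\bar{\w}(t)\|$, the latter arising because $\E[\bar{\g}(t)\mid\calF_{t-1}]=\frac{1}{m}\sum_j\nabla J_j(\w_j(t))\neq\nabla J(\bar{\w}(t))$; this bias will be absorbed into the disagreement term treated below. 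The crucial step is the Gram identity $\|\sum_j a_j\x_j\|^2\le\|a\|_2^2\|\Q_m\|$ with $a_j=\ell'_j/m$ applied together with $\E\|\Q_m(t)\|\le 5m\rho^2$ from Theorem \eqref{theorem:specnorm} (whose sampling hypothesis is exactly the assumption on $n$ in the statement), which gives $\E\|\bar{\g}(t)\|^2\lesssim L^2\rho^2/m$ and hence, using $\rho^2\le 1$, the $L^2\log T/(m\mu T)$ summand.

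For the disagreement I would unroll the update as
\begin{align*}
\w_i(t)-\bar{\w}(t) = -\sum_{s=1}^{t-1}\eta_s\sum_{j=1}^m\Big([\bP^{t-s}]_{ij}-\tfrac{1}{m}\Big)\g_j(s),
\end{align*}
then for fixed $(s,t)$ set $a_j(s,t)=[\bP^{t-s}]_{ij}-1/m$ so that $\|a(s,t)\|_2^2\le\lambda_2(\bP)^{t-s}$, and apply the Gram identity once more together with Theorem \eqref{theorem:specnorm} to obtain $\E\|\sum_j a_j(s,t)\g_j(s)\|\le L\sqrt{5m\rho^2}\,\lambda_2(\bP)^{(t-s)/2}$ (plus a negligible contribution from the $\mu\w_j(s)$ piece of $\g_j(s)$). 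Substituting $\eta_s=1/(\mu s)$, evaluating the geometric-harmonic double sum (the inner geometric series is controlled by $1/(1-\sqrt{\lambda_2(\bP)})$ and the outer harmonic one by $\log T$, with the hypotheses \eqref{eq:thm_Tbound} on $T$ ensuring the tails are benign), and averaging over $t\le T$ produces the $L^2\sqrt{m\rho^2}(\log T)^2/((1-\sqrt{\lambda_2(\bP)})\mu T)$ summand. Combining with the centralized part and collecting absolute constants into the numerical factor $100$ yields \eqref{eq:morecomm:totalerr}.

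The main obstacle will be the disagreement step: producing the $\sqrt{m\rho^2}$ factor rather than the naive $L\sqrt{m}$ requires peeling the expectation so that the deterministic weights $a_j(s,t)$ factor out while the Gram matrix $\Q_m(s)$—which depends on the random points $\{\x_{i_s^j}\}_j$ drawn at time $s$—is handled by Theorem \eqref{theorem:specnorm}; the sampled points are conditionally independent of the iterates only given the past history, so one must invoke Cauchy-Schwarz and the tower property carefully to avoid double counting. A secondary subtlety is that the sample-size hypothesis of Theorem \eqref{theorem:specnorm} must hold at every time slice used in the sum over $s$, which is precisely what dictates the lower bounds \eqref{eq:thm_Tbound} on $T$, and the analogous accounting of the bias term $\frac{1}{m}\sum_j\nabla J_j(\w_j(t))-\nabla J(\bar{\w}(t))$ in the centralized step must not disturb the $L^2/m$ scaling of the first summand.
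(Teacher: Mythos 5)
Your architecture mirrors the paper's (a centralized-progress term plus a consensus-disagreement term, with the spectral-norm lemma supplying the data dependence), but you harvest the factor $\specnorm$ from the wrong Gram matrix, and under the stated hypotheses that step fails. In your disagreement bound you write $\E\norm{\sum_j a_j(s,t)\g_j(s)} \le L\sqrt{5m\specnorm^2}\,\lambda_2(\bP)^{(t-s)/2}$ by applying Theorem \eqref{theorem:specnorm} to the Gram matrix $\Q_m(s)$ of the $m$ points sampled \emph{across the nodes at the single time $s$}. That theorem only gives $\E[\spec(\Q_K)] \le 5K\specnorm^2$ when $K > \frac{4}{3\specnorm^2}\log d$; here $K=m$, and no lower bound on $m$ is assumed (indeed the interesting regime of the theorem is $m \le 1/\specnorm^2$, and $m$ as small as $2$ is allowed). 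For small $m$ one only has the trivial bound $\E[\spec(\Q_m)]\le m$, so your disagreement term carries $\sqrt{m}$ rather than $\sqrt{m\specnorm^2}$, and since your top-level decomposition multiplies the disagreement only by the plain Lipschitz constant $L$, the final bound degrades to $\sqrt{m}\log T/(1-\sqrt{\lambda_2(\bP)})$ — the data dependence, which is the entire content of the theorem, is lost. (The same objection applies to your claim $\E\norm{\bar{\g}(t)}^2 \lesssim L^2\specnorm^2/m$, but there it is harmless: $\E[\spec(\Q_m)]\le \Tr(\Q_m)\le m$ already gives the $L^2/m$ needed for the $1/m$ summand.)

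The paper routes the $\specnorm$ through a different door. Its network-error lemma (Lemma \eqref{lemma:avgdevBnd}) is data-independent, giving $\sqrt{\E\norm{\bar{\w}(t)-\w_i(t)}^2} \le \frac{2L\sqrt{m}}{\reg}\cdot\frac{\log(2bet^2)}{bt}$ via the $\ell_1$ mixing bound; the factor $\specnorm$ then enters because the disagreement is multiplied not by $L$ but by the gradient norms $\norm{\nabla J_i(\w_i(t))}$ and $\norm{\nabla J_i(\bar{\w}(t))}$ (term T3 of the decomposition), which are shown to satisfy $\E\norm{\nabla J_i(\cdot)}^2 \le 30L^2\specnorm^2$. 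Those bounds come from the Gram identity applied to the size-$n$ local datasets $\Q_{S_i}$ and to the size-$(t-1)$ Gram matrices $\Q_{i,t-1}$ of the points sampled at a single node over time — and it is precisely the hypotheses $n > \frac{4}{3\specnorm^2}\log d$ and $T/\log T > \frac{4}{3\specnorm^2}\log d$ that license Theorem \eqref{theorem:specnorm} for those two matrices. So the fix is to keep your unrolled network error at $L\sqrt{m}$ and instead carry the $\specnorm$ via the per-node gradient-norm bounds (which also requires the more careful decomposition of the cross term $\nabla J_i(\w_i(t))^{\trans}(\bar{\w}(t)-\w^*)$ that the paper performs, rather than a blanket Lipschitz step at the level of $J$).
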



\noindent \textit{Remark 1:} Theorem \ref{theorem:mainThrm} indicates that the number of machines should be chosen as a function of $\specnorm$. We can identify three sub-cases of interest:

\textbf{Case (a):} \textit{$m\le \frac{1}{\specnorm^{2/3}}$}: In this regime since $1/m > \sqrt{m\specnorm^2}$ (ignoring the constants and the $\log T$ term) we always benefit from adding more machines.

\textbf{Case (b):} \textit{$\frac{1}{\specnorm^{2/3}} < m\le \frac{1}{\specnorm^2}$}: 
The result tells us that there is no degradation in the error and the bound improves by a factor $\sqrt{m}\specnorm$. Sparse data sets generally have a smaller value of $\specnorm^2$ (as seen in Tak\'{a}\v{c} et al.~\cite{TakacBRS:13icml}); Theorem \ref{theorem:mainThrm} suggests that for such data sets we can use a larger number of machines without losing performance. However the requirements on the number of iterations also increases. This provides additional perspective on the observation by Tak\'{a}\v{c} et al~\cite{TakacBRS:13icml} that sparse datasets are more amenable to parallelization via mini-batching. The same holds for our type of parallelization as well.

\textbf{Case (c):} \textit{$m > \frac{1}{\specnorm^2}$}: 
In this case we pay a pay a penalty $\sqrt{m\specnorm^2} \ge 1$ suggesting that for datasets with large $\specnorm$ we should expect to lose performance even with relatively fewer machines.

Note that $m>1$ is implicit in the condition $T > 2 e  \log(1/\sqrt{ \lambda_2)}) $ since $\lambda_2 =0$ for $m=1$. This excludes the single node Pegasos~\citep{TakacBRS:13icml} case. Additionally in the case of general strongly convex losses (not necessarily dependent on $\w^{\trans}\x$) we can obtain a convergence rate of $\mc{O}(\log^2(T)/T)$. 

\noindent \textit{Remark 2:} The lower bound on the number of iterations \ref{eq:thm_Tbound} can be considerably improved by instead looking at the intrinsic dimension of the data since for several real datasets the intrinsic dimension can be much smaller than the dimension of the ambient space. However this requires us to assume a lower bound on the norm of the data samples, which is a less natural assumption.

\section{Proof of Data Dependent Convergence}
Let  $\mc{F}_t$ be the sigma algebra generated by data and random selections of the algorithm up to time $t$, so that the iterates $\{\w_i(t) : i \in [m]\}$ are measurable with respect to $\mc{F}_t$.  Theorem \ref{theorem:mainThrm} provides a bound on the suboptimality gap for the output $\hat{\w}_i(T)$ of Algorithm \eqref{alg:DiSCO} at node $i$, which is the average of that node's iterates. In the analysis we relate this local average to the average iterate across nodes at time $t$:
	\begin{align}
	\bar{\w}(t)  = \sum_{i=1}^m \frac{\w_i(t)}{m}.
	\label{eq:node_avg_iterate}
	\end{align}
We will also consider the average of $\bar{\w}(t)$ over time. 

The proof consists of three main steps. 
\begin{itemize}
\item We establish the following inequality for the objective error:
	\begin{align}
	\E \left[ J(\bar{\w}(t))- J(\w^{*}) \right] &\le \notag \\
   	&\hspace{-1in}
	\frac{(\step_t^{-1}-\reg)}{2} \E \left[ \norm{\bar{\w}(t)-\w^{*}}^2 \right] \notag \\
   	&\hspace{-1in}
	- \frac{\step_t^{-1}}{2} \E\left[ \norm{\bar{\w}(t+1)-\w^{*}}^2 \right] 		
		\notag \\
	&\hspace{-1in} 
	+ \frac{\step_t}{2} \E\left[ \norm{\sum_{i=1}^{m}\frac{\g_i(t)}{m}}^2 
			\right] \notag \\ 
	&\hspace{-1in} 
	+\sum_{i=1}^{m} 
			\sqrt{\E\left[\norm{ \bar{\w}(t)-\w_i(t) }^2 \right]} \notag \\
	&\hspace{-1in}
	\cdot \sqrt{\E\left[ \left(\norm{\nabla J_i(\w_i(t))} 
				+ \norm{\nabla J_i(\bar{\w}(t))} \right)^2 \right] }/m,
	 \label{eq:mainBnd_before}
	 \end{align}
where $\bar{\w}(t)$ is the average of the iterates at all nodes and the expectation is with respect to $\calF_t$ while conditioned on the sample split across nodes. All expectations, except when explicitly stated, will be conditioned on this split.
 \item We bound $\E\left[\norm{\nabla J(\w_i(t))}^2\right] $ and $\frac{\step_t}{2}\E \left[ \norm{\sum_{i=1}^{m}\frac{\g_i(t)}{m}}^2 \right]$ in terms of the spectral norm of the covariance matrix of the distribution $\mc{P}$ by additionally taking expectation with respect to the sample $S$.
 \item We bound the network error $\E\left[\norm{ \bar{\w}(t)-\w_i(t) }^2 \right]$ in term of the network size $m$ and a spectral property of the matrix $\bP$.
\end{itemize}
Combining the bounds using inequality \eqref{eq:mainBnd_before} and applying the definition of subgradients yields the result of Theorem \ref{theorem:mainThrm}.

\subsection{Spectral Norm of Random Submatrices}
In this section we restate Theorem \ref{theorem:specnorm} proved in Chapter \ref{Chapter2} pertaining to the spectral norm of submatrices that is central to our results. 
\begin{lemma}\label{lem:specnormIntdim}
Let $\mc{P}$ be a distribution on $\mathbb{R}^d$ with second moment matrix $\mbs{\Sigma} = \E_{\Y \sim \mc{P}}[ \Y \Y^{\trans}]$ such that $\norm{\Y_k} \le 1$ almost surely.  Let $\dists^2 = \spec(\mbs{\Sigma})$.  Let $\Y_1, \Y_2, \ldots, \Y_K$ be an i.i.d. sample from $\mc{P}$ and let
	\[
	\Q_K = \sum_{k=1}^{K} \Y_k \Y_k^{\trans}
	\]
be the empirical second moment matrix of the data.  Then for $K > \frac{4}{3\dists^2} \log(d)$,
	\begin{align}
	\E\left[ \frac{ \spec( \Q_K ) }{K} \right] 
	&\le 5 \dists^2. 
	\end{align}
\end{lemma}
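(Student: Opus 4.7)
The plan is to apply the Matrix Bernstein inequality of Tropp to the centered random matrices $\X_k = \Y_k \Y_k^{\trans} - \mbs{\Sigma}$, which are mean zero by construction. Two ingredients need to be checked: an almost-sure upper bound on $\lmax(\X_k)$, and a bound on the matrix variance $\spec\left( \sum_k \E[\X_k^2] \right)$. The first is immediate from $\norm{\Y_k} \le 1$ and the positive semidefiniteness of $\mbs{\Sigma}$, giving $\lmax(\X_k) \le 1$. For the variance, I would expand $\X_k^2$ and use $\E[\norm{\Y_k}^2 \Y_k \Y_k^{\trans}] \preceq \E[\Y_k \Y_k^{\trans}] = \mbs{\Sigma}$ (since $\norm{\Y_k}^2 \le 1$), together with $\spec(\mbs{\Sigma}^2) = \rho^4 \le \rho^2$, to obtain an upper bound of $2 K \rho^2$.

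Next I would invoke the Matrix Bernstein inequality to get the standard piecewise tail bound: for $r/K \le 2\rho^2$ a subgaussian tail $d \exp(-3r^2/(16 K \rho^2))$ and for $r/K \ge 2\rho^2$ a subexponential tail $d \exp(-3r/8)$. The triangle inequality for $\spec$, together with $\spec(\mbs{\Sigma}) = \rho^2$, then transfers this to a tail bound on the deviation $|\spec(\Q_K)/K - \rho^2|$.

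The final and most delicate step is to integrate the tail bound. Using the layer-cake representation
\begin{align*}
\E\!\left[\frac{\spec(\Q_K)}{K}\right] = \int_0^{\infty} \Prob\!\left( \frac{\spec(\Q_K)}{K} \ge x \right) dx,
\end{align*}
I would split the integral at $3\rho^2$: bound the probability trivially by $1$ on $[0, 3\rho^2]$, and on $(3\rho^2, \infty)$ change variables to $r' = x - \rho^2 \ge 2\rho^2$ so that we are in the subexponential regime of Bernstein. The remaining integral then evaluates to roughly $\tfrac{8}{3} \cdot \tfrac{d}{K} \exp(-\tfrac{3}{4}\rho^2 K)$, and the hypothesis $K > \tfrac{4}{3\rho^2} \log d$ ensures this residual is bounded by $2\rho^2$, giving the claimed $5\rho^2$.

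The main obstacle is essentially bookkeeping: choosing the split point at $3\rho^2$ so that one stays in the subexponential regime of Bernstein (avoiding the square-root in the exponent), and tuning the lower bound on $K$ so that the residual exponential factor $d \exp(-\tfrac{3}{4}\rho^2 K)/K$ is absorbed cleanly into the constant. Since the argument is linear in $K$ only through the tail exponent, no extra concentration or truncation is needed beyond the hypothesis on $K$.
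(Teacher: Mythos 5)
Your proposal is correct and follows essentially the same route as the paper's proof of this bound (Theorem \ref{theorem:specnorm} in Chapter \ref{Chapter2}): the same centering $\X_k = \Y_k\Y_k^{\trans} - \mbs{\Sigma}$, the same variance bound $2K\rho^2$ via $\rho\le 1$, the same piecewise Matrix Bernstein tail, and the same layer-cake integration split at $3\rho^2$ landing in the subexponential regime, with the hypothesis on $K$ absorbing the residual $\tfrac{8}{3}\cdot\tfrac{d}{K}\exp(-\tfrac{3}{4}\rho^2 K)$ into the constant. No gaps.
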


Thus when $\mc{P}$ is the empirical distribution we get that $	\E\left[ \frac{ \spec( \Q_K ) }{K} \right] \le 5 \dists^2$.

\subsection{Decomposing the expected suboptimality gap}

The proof in part follows \citep{nedicDistributedOptimization}. It is easy to verify that because $\bP$ is doubly stochastic the average of the iterates across the nodes at time $t$, the average of the iterates across the nodes in \eqref{eq:node_avg_iterate} satisfies the following update rule:
\begin{align}
\bar{\w}(t+1) = \bar{\w}(t) - \step_t\sum_{i=1}^{m}\frac{\g_i(t)}{m}.
\label{eq:avgUp}
\end{align}
We emphasize that in Algorithm \eqref{alg:DiSCO} we do not perform a final averaging across nodes at the end as in \eqref{eq:node_avg_iterate}. Rather, we analyze the average at a single node across its iterates (sometimes called Polyak averaging).  Analyzing \eqref{eq:node_avg_iterate} provides us with a way to understand how the objective $J(\w_i(t))$ evaluated at any node $i$'s iterate $\w_i(t)$ compares to the minimum value $J(\w^*)$.  The details can be found in Section \ref{sec:combo_bound}.

To simplify notation, we treat all expectations as conditioned on the sample $S$. Then \eqref{eq:avgUp},
\begin{align}
\E\left[ \norm{\bar{\w}(t+1)-\w^{*}}^2 \Big| \calF_t \right] & \notag \\
&\hspace{-1.2in}
=\E \left[ \norm{\bar{\w}(t)-\w^{*}}^2 | \calF_t \right] \notag \\
	&\hspace{-0.8in} 
	+ \step_t^2\E \left[ \norm{\sum_{i=1}^{m}\frac{\g_i(t)}{m}}^2 \Big| \calF_t\right] 
	\notag \\
	&\hspace{-0.8in}
	- 2\step_t(\bar{\w}(t)-\w^{*})^{\trans}\sum_{i=1}^{m}\frac{\E \left[ \g_i(t)|\calF_t \right]}{m} 
	\notag \\
&\hspace{-1.2in}
=  \E \left[ \norm{\bar{\w}(t)-\w^{*}}^2 |\calF_t \right] \notag \\
	&\hspace{-0.8in} 
	+ \step_t^2\E \left[ \norm{\sum_{i=1}^{m}\frac{\g_i(t)}{m}}^2 \Big| \calF_t \right] \notag \\
	&\hspace{-0.8in} 
	- 2\step_t\sum_{i=1}^{m}(\bar{\w}(t)-\w^{*})^{\trans}\frac{\E \left[ \g_i(t)|\calF_t \right]}{m}. 
\label{eq:comm_recursion}
\end{align}

Note that $\nabla J_i(\w_i(t)) = \E \left[ \g_i(t)|\calF_t \right]$, so for the last term, for each $i$ we have
\begin{align}
\nabla J_i(\w_i(t))^{\trans}(\bar{\w}(t)-\w^{*}) \notag \\
&\hspace{-1.2in}
= 
\nabla J_i(\w_i(t))^{\trans}\left(\bar{\w}(t)-\w_i(t)\right) 
	\notag \\
	&\hspace{-0.8in}
	+ \nabla J_i(\w_i(t))^{\trans}\left(\w_i(t) - \w^{*}\right) 
	\notag \\
&\hspace{-1.2in} 
\ge 
-\norm{\nabla J_i(\w_i(t))} \norm{\bar{\w}(t)-\w_i(t)} 	
	\notag \\
	&\hspace{-0.8in}
	+ \nabla J_i(\w_i(t))^{\trans}\left(\w_i(t) - \w^{*}\right) 
	\notag \\
&\hspace{-1.2in}
\ge
-\norm{\nabla J_i(\w_i(t))} \norm{\bar{\w}(t)-\w_i(t)} 
	\notag \\
	&\hspace{-0.8in}
	+ J_i(\w_i(t)) - J_i(\w^*)  
	+ \frac{\reg}{2}\norm{\w_i(t)-\w^{*}}^2
	\notag \\
&\hspace{-1.2in}
= 
-\norm{\nabla J_i(\w_i(t))} \norm{\bar{\w}(t)-\w_i(t)} 
	\notag \\
	&\hspace{-0.8in}
	+ J_i(\w_i(t)) - J_i(\bar{\w}(t)) 
	\notag \\
  	&\hspace{-0.8in} 
	+ \frac{\reg}{2}\norm{\w_i(t)-\w^{*}}^2 
	+  J_i(\bar{\w}(t)) - J_i(\w^*) 
	\notag \\
&\hspace{-1.2in}
\ge 
	-\norm{\nabla J_i(\w_i(t))} \norm{\bar{\w}(t)-\w_i(t)} 
	\notag \\
	&\hspace{-0.8in}
	+\nabla J_i(\bar{\w}(t))^{\trans}\left(\w_i(t) - \bar{\w}(t)\right) 
	\notag \\
  	&\hspace{-0.8in}
	+ \frac{\reg}{2}\norm{\w_i(t)-\w^{*}}^2 
	+  J_i(\bar{\w}(t)) - J_i(\w^*) 
	\notag \\
&\hspace{-1.2in}
\ge 
	-\left(\norm{\nabla J_i(\w_i(t))}
		+\norm{\nabla J_i(\bar{\w}(t))}\right) \norm{\bar{\w}(t)-\w_i(t)} 
	\notag \\
 	&\hspace{-0.8in}
	+ \frac{\reg}{2}\norm{\w_i(t)-\w^{*}}^2 
	+ J_i(\bar{\w}(t)) - J_i(\w^*),
\end{align}
where the second and third lines comes from applying the Cauchy-Shwartz inequality and strong convexity, the fifth line comes from the definition of subgradient, and the last line is another application of the Cauchy-Shwartz inequality.

Averaging over all the nodes, using convexity of $\norm{\cdot}^2$, the definition of $J(\cdot)$, and Jensen's inequality yields the following inequality:
\begin{align}
-2 \step_t \sum_{i=1}^{m} (\bar{\w}(t)-\w^{*})^{\trans} \frac{\E[\g_i(t)|\calF_t]}{m} 	
	\notag \\
&\hspace{-2in}
\le 
	2\step_t  
	\sum_{i=1}^{m}
		\frac{
			\norm{ \bar{\w}(t)-\w_i(t) } 
			\left (\norm{\nabla J_i(\w_i(t))} 
				+ \norm{\nabla J_i(\bar{\w}(t))} \right )
			}{m}
	\notag \\
	&\hspace{-1.7in}
	-2\step_t
	\left( \sum_{i=1}^m \frac{J_i(\bar{\w}(t))- J_i(\w^{*})}{m} \right) 
	\notag \\
	&\hspace{-1.7in}
	-\reg\step_t \sum_{i=1}^{m}\frac{\norm{\w_i(t)-\w^{*}}^2}{m} 
	\notag \\
&\hspace{-2in}
\le
	2\step_t 
	\sum_{i=1}^{m}\frac{\norm{ \bar{\w}(t)-\w_i(t) } \left (\norm{\nabla J_i(\w_i(t))} + \norm{\nabla J_i(\bar{\w}(t))} \right )}{m}
	\notag \\
	&\hspace{-1.7in}
	-2\step_t\left( J(\bar{\w}(t))- J(\w^{*}) \right) - \reg\step_t \norm{\bar{\w}(t)-\w^{*}}^2
\label{eq:node_average}
\end{align}

Substituting inequality \eqref{eq:node_average} in recursion \eqref{eq:comm_recursion}, 
	\begin{align}
	\E\left[ \norm{\bar{\w}(t+1)-\w^{*}}^2 \big| \calF_t \right] 
	\notag \\
	&\hspace{-1.6in}
	\le
		\E \left[ \norm{\bar{\w}(t)-\w^{*}}^2 |\calF_t \right] 
		\notag \\
  		&\hspace{-1.5in}
		+ \step_t^2 \E\left[ \norm{\sum_{i=1}^{m}\frac{\g_i(t)}{m}}^2 
			~\Big|~ \calF_t\right] 
		\notag \\
		&\hspace{-1.5in} 
		+ 2 \step_t
		\sum_{i=1}^{m}
			\frac{
				\norm{ \bar{\w}(t)-\w_i(t) } 
				\left(\norm{\nabla J_i(\w_i(t))} 
					+ \norm{\nabla J_i(\bar{\w}(t))}
				\right)
			}{m} 
		\notag \\
		&\hspace{-1.5in} 
 		-2 \step_t \left( J(\bar{\w}(t))- J(\w^{*}) \right) 
		- \reg \step_t \norm{\bar{\w}(t)-\w^{*}}^2.
	\label{eq:recurse1}
	\end{align}
Taking expectations with respect to the entire history $\mc{F}_t$,
\begin{align}
\E\left[ \norm{\bar{\w}(t+1)-\w^{*}}^2 \right]  
	\notag \\
&\hspace{-1.3in}
\le
	\E\left[ \norm{\bar{\w}(t)-\w^{*}}^2 \right] 
	 + \step_t^2\E \left[ \norm{\sum_{i=1}^{m}\frac{\g_i(t)}{m}}^2 \right] 	
	\notag \\ 
	&\hspace{-1.2in}
	+ 2\step_t \cdot 
	\notag \\
	&\hspace{-1.1in}
		\sum_{i=1}^{m} \frac{
			\E\left[\norm{ \bar{\w}(t)-\w_i(t) } 
				\left(\norm{\nabla J_i(\w_i(t))} 
				+ \norm{\nabla J_i(\bar{\w}(t))} \right)
				\right]
			}{m} 
	\notag \\ 
	&\hspace{-1.2in}
	-2\step_t \left( \E\left[ J(\bar{\w}(t))- J(\w^{*}) \right] \right)
			- \reg\step_t \E \left[ \norm{\bar{\w}(t)-\w^{*}}^2 \right] 
	\notag \\ 
&\hspace{-1.3in}
\le 
	-2 \step_t \left( \E \left[ J(\bar{\w}(t))- J(\w^{*}) \right] \right)
	\notag \\ 
	&\hspace{-1.2in}
	+ (1 - \reg \step_t) \E\left[ \norm{\bar{\w}(t)-\w^{*}}^2 \right] 
	+ \step_t^2\E \left[ \norm{\sum_{i=1}^{m}\frac{\g_i(t)}{m}}^2 \right] 
	\notag \\ 
	&\hspace{-1.2in}
	+ \frac{2\step_t}{m} 
			 \sum_{i=1}^{m} 
			\sqrt{\E\left[\norm{ \bar{\w}(t)-\w_i(t) }^2 \right]} 
			\notag \\ 
			&\hspace{-0.6in}
			\cdot \sqrt{\E\left[ \left(\norm{\nabla J_i(\w_i(t))} 
				+ \norm{\nabla J_i(\bar{\w}(t))} \right)^2 \right] }
\end{align}

This lets us bound the expected suboptimality gap $\E \left[ J(\bar{\w}(t))- J(\w^{*}) \right]$ via three terms:
	\begin{align}
	\text{T1}
	&= \frac{(\step_t^{-1}-\reg)}{2}
			\E \left[ \norm{\bar{\w}(t)-\w^{*}}^2 \right]
		\notag \\
		&\hspace{0.5in} 
		- \frac{\step_t^{-1}}{2}
			\E\left[ \norm{\bar{\w}(t+1)-\w^{*}}^2 \right]
	\label{eq:T1}
	\\
	\text{T2}
	&=
		\frac{\step_t}{2}\E \left[ \norm{\sum_{i=1}^{m}\frac{\g_i(t)}{m}}^2 \right]
	\label{eq:T2}
	\\
	\text{T3}
	&= \frac{1}{m} \sum_{i=1}^{m} 
			\sqrt{\E\left[\norm{ \bar{\w}(t)-\w_i(t) }^2 \right]} 
		\notag \\
		&\hspace{0.5in} 
		\cdot 
		\sqrt{\E\left[ \left (\norm{\nabla J_i(\w_i(t))} 
				+ \norm{\nabla J_i(\bar{\w}(t))} \right)^2 \right] },
	\label{eq:T3}
	\end{align}
where
\begin{align}
\E \left[ J(\bar{\w}(t))- J(\w^{*}) \right]  
&\le \text{T1} + \text{T2} + \text{T3}.
 \label{eq:mainBnd}
 \end{align}
The remainder of the proof is to bound these three terms separately.

\subsection{Network Error Bound} 
We need to prove an intermediate bound first to handle term \text{T3}.

\begin{lemma}\label{lemma:avgdevBnd}
Fix a Markov matrix $\bP$ and consider Algorithm \ref{alg:DiSCO} when the objective $J(\w)$ is strongly convex \removed{and the number of iterations \$T$ satisfies
\begin{align}
	T &> 2 e  \log(1/\sqrt{ \lambda_2(\bP)}) 
	\end{align} }
we have the following inequality for the expected squared error between the iterate $\w_i(t)$ at node $i$ at time $t$ and the average $\bar{\w}(t)$ defined in Algorithm \ref{alg:DiSCO}:
	\begin{align}
	\sqrt{\E\left[\norm{ \bar{\w}(t)-\w_i(t) }^2 \right]} \le \frac{2L}{\reg}\cdot \frac{\sqrt{m}}{b }\cdot\frac{\log(2bet^2)}{t},
	\end{align}
where $b= (1/2)\log(1/\lambda_2(\bP))$.
\end{lemma}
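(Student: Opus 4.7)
The plan is to unroll the recursion, express the disagreement $\bar{\w}(t)-\w_i(t)$ as a weighted sum of past subgradients, and then exploit the mixing rate of $\bP$ together with a uniform bound on the subgradient norms.

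First, because $\w_i(1)=\mathbf{0}$ and $\bP$ is doubly stochastic (so $\mathbf{1}^{\trans}\bP = \mathbf{1}^{\trans}$), iterating the update rule yields
\begin{align*}
\w_i(t+1) &= -\sum_{s=1}^{t}\step_s \sum_{j=1}^m [\bP^{t-s}]_{ij}\,\g_j(s), \\
\bar{\w}(t+1) &= -\sum_{s=1}^{t}\step_s \frac{1}{m}\sum_{j=1}^m \g_j(s),
\end{align*}
so that
\begin{align*}
\bar{\w}(t+1)-\w_i(t+1) = \sum_{s=1}^{t}\step_s \sum_{j=1}^m \Bigl(\tfrac{1}{m}-[\bP^{t-s}]_{ij}\Bigr)\g_j(s).
\end{align*}

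Next, I would apply the triangle inequality across $s$, then Cauchy--Schwarz across $j$, to get $\big\|\sum_j (1/m-[\bP^{t-s}]_{ij})\g_j(s)\big\| \le \big\|[\bP^{t-s}-\tfrac{1}{m}\mathbf{1}\mathbf{1}^{\trans}]_{i,:}\big\|_2 \sqrt{\sum_j \|\g_j(s)\|^2}$. For a symmetric doubly stochastic $\bP$ the operator norm $\|\bP^k - \tfrac{1}{m}\mathbf{1}\mathbf{1}^{\trans}\|$ equals $\lambda_2(\bP)^k$, so the $i$-th row has Euclidean norm at most $\lambda_2(\bP)^{t-s}$. A standard Pegasos-style induction shows $\|\w_i(t)\| \le L/\reg$ for all $i,t$ under $\step_t = 1/(\reg t)$, using that the consensus step satisfies $\|\sum_j \bP_{ij}\w_j(t)\| \le \max_j \|\w_j(t)\|$ by convexity of the norm, so averaging cannot inflate the maximum and the $\reg$-shrinkage balances the $L$-bounded loss subgradient. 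Hence $\|\g_i(t)\| \le L + \reg\|\w_i(t)\| \le 2L$, giving the deterministic bound
\begin{align*}
\|\bar{\w}(t+1)-\w_i(t+1)\| \le \frac{2L\sqrt{m}}{\reg}\sum_{s=1}^{t} \frac{\lambda_2(\bP)^{t-s}}{s}.
\end{align*}

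The final step is to bound the harmonic-times-geometric sum. Writing $\lambda_2(\bP) = e^{-2b}$, I would split at $\delta = \lceil \log(2bet^2)/(2b)\rceil$. For the $\delta$ late terms ($s > t-\delta$) bound $1/s \le 1/(t-\delta)$ and $\sum_s e^{-2b(t-s)} \le 1/(1-e^{-2b}) \le 1/b$ (for $b$ in the relevant range), giving an $O(\log(2bet^2)/(bt))$ contribution. For the early terms, the factor $e^{-2b\delta} \le 1/(2bet^2)$ times $\sum 1/s \le 1+\log t$ is negligible. Tracking constants produces exactly the prefactor $\log(2bet^2)/(bt)$. Since the whole derivation is pointwise, it transfers to $\sqrt{\E[\|\bar{\w}(t)-\w_i(t)\|^2]}$.

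The step I expect to be the main obstacle is the precise evaluation of the split sum so that the exact factor $\log(2bet^2)$ falls out with a clean leading constant of $1$ rather than some looser $C\log t$; this requires matching the split point $\delta$ to the desired bound and using a tight estimate of $1/(1-e^{-2b})$. Establishing the uniform iterate bound $\|\w_i(t)\|\le L/\reg$ in the distributed setting also needs a short induction, but it reduces cleanly to the Pegasos argument once the maximum-norm preservation of doubly stochastic mixing is noted.
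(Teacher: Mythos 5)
Your proposal follows essentially the same route as the paper's proof: unroll the consensus recursion to write $\bar{\w}(t)-\w_i(t)$ as a sum of past gradients weighted by the rows of $\frac{1}{m}\mbf{1}\mbf{1}^{\trans}-\bP^{t-s}$, invoke geometric mixing of $\bP$ to extract a $\sqrt{m}\,e^{-b(t-s)}$ factor, bound the resulting harmonic--geometric sum by $\log(2bet^2)/(bt)$, and note that the bound is pointwise so it transfers to $\sqrt{\E[\|\cdot\|^2]}$. The only differences are cosmetic --- the paper controls the row in $\ell_1$ against a sup-norm gradient bound and evaluates the sum via $e^{-x}<1/(1+x)$ plus an exact integral rather than your split at $\delta$ --- and your explicit induction for $\norm{\w_i(t)}\le L/\reg$ is if anything more careful than the paper, which simply asserts $\norm{\g_j(s)}\le L$ (though note the shrinkage $-\step_t\reg\w_i(t)$ acts on $\w_i(t)$ while the averaging acts on all $\w_j(t)$, so the coefficients $\bP_{ij}-\delta_{ij}/t$ need $\bP_{ii}\ge 1/t$ to stay nonnegative in your induction).
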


\begin{proof}
We follow a similar analysis as others \citep[Prop. 3]{nedicDistributedOptimization}~\citep[IV.A]{dualAveraging}~\citep{DistStronglyConvex}.  Let $\mbf{W}(t)$ be the $m \times d$ matrix whose $i$-th row is $\w_i(t)$ and $\mbf{G}(t)$ be the $m \times d$ matrix whose $i$-th row is $\g_i(t)$ .  Then the iteration can be compactly written as
	\[
	\mbf{W}(t+1) = \bP(t) \mbf{W}(t) - \step_t \mbf{G}(t)
	\]
and the network average matrix $\bar{\mbf{W}}(t) = \frac{1}{m} \mbf{1} \mbf{1}^{\top} \mbf{W}(t)$.  Then we can write the difference using the fact that $\bP(t) = \bP$ for all $t$:
	\begin{align}
	&\bar{\mbf{W}}(t+1) - \mbf{W}(t+1) =\notag \\
		&\left(  \frac{1}{m} \mbf{1} \mbf{1}^{\top} - I \right) \left( \bP \mbf{W}(t) - \step_t \mbf{G}(t) \right) \notag \\
			&\hspace{-1in} \notag \\
		&= \left(  \frac{1}{m} \mbf{1} \mbf{1}^{\top} - \bP \right) \mbf{W}(t)
			- \step_t \left(  \frac{1}{m} \mbf{1} \mbf{1}^{\top} - I \right) \mbf{G}(t) \notag \\
		&\hspace{-1in} \notag \\
		&= \left(  \frac{1}{m} \mbf{1} \mbf{1}^{\top} - \bP \right) \left( \bP \mbf{W}(t-1) - \step_{t-1} \mbf{G}(t-1) \right)
			\notag \\
			&\quad - \step_t \left(  \frac{1}{m} \mbf{1} \mbf{1}^{\top} - I \right) \mbf{G}(t) \notag \\
				&\hspace{-1in} \notag \\
		&= \left(  \frac{1}{m} \mbf{1} \mbf{1}^{\top} - \bP^2 \right) \mbf{W}(t-1) \notag \\
			&\quad- \step_{t-1} \left(  \frac{1}{m} \mbf{1} \mbf{1}^{\top} - \bP \right) \mbf{G}(t-1)
			\notag \\
			&\quad- \step_t \left(  \frac{1}{m} \mbf{1} \mbf{1}^{\top} - I \right) \mbf{G}(t) \notag \\
				&\hspace{-1in} \notag \\
		&= \left(  \frac{1}{m} \mbf{1} \mbf{1}^{\top} - \bP^2 \right) \mbf{W}(t-1)\notag \\
		&\quad- \sum_{s=t-1}^{t} \step_s \left(  \frac{1}{m} \mbf{1} \mbf{1}^{\top} - \bP^{t-s} \right) \mbf{G}(s).
	\end{align}
Continuing the expansion and using the fact that $\mbf{W}(1) = \mbf{0}$,
	\begin{align}
   &	\bar{\mbf{W}}(t+1) - \mbf{W}(t+1) = \notag \\
	&\left( \frac{1}{m} \mbf{1} \mbf{1}^{\top} - \bP^t \right)  \mbf{W}(1)
		- \sum_{s=1}^{t} \step_s \left(  \frac{1}{m} \mbf{1} \mbf{1}^{\top} - \bP^{t-s} \right) \mbf{G}(s) \notag \\
		&\quad= - \sum_{s=1}^{t} \step_s \left(  \frac{1}{m} \mbf{1} \mbf{1}^{\top} - \bP^{t-s} \right) \mbf{G}(s) \notag \\
		&\quad= - \sum_{s=1}^{t-1} \step_s \left(  \frac{1}{m} \mbf{1} \mbf{1}^{\top} - \bP^{t-s} \right) \mbf{G}(s) \notag \\
		&\quad- \step_t \left(  \frac{1}{m} \mbf{1} \mbf{1}^{\top} - I \right) \mbf{G}(t).
			\label{eq:more:neterr_matrix}
	\end{align}
Now looking at the norm of the $i$-th row of \eqref{eq:more:neterr_matrix} and using the bound on the gradient norm:
	\begin{align}
	&\norm{ \bar{\w}(t)-\w_i(t) } \notag \\
		&\quad \le \Bigg\| 
			\sum_{s=1}^{t-1} \step_s \sum_{j=1}^{m} 
				\left( \frac{1}{m} - (\bP^{t-s})_{ij} \right) \g_j(s) 
			\notag \\
			&\hspace{1in}
			+ \step_t \left( \sum_{j=1}^{m} \frac{1}{m} \g_j(t) - \g_i(t) \right) 
				\Bigg\| \\
		&\le \sum_{s=1}^{t-1} \frac{L}{\reg s} \cdot \norm{ \frac{1}{m} - (\bP^{t-s})_{i} }_1 + \frac{2L}{\reg t}.
		\label{eq:devBnd}
	\end{align}
	
We handle the term $\norm{ \frac{1}{m} - (\bP^{t-s})_{i} }_1$ using a bound on the mixing rate of Markov chains (c.f. (74) in Tsianos and Rabbat~\cite{DistStronglyConvex}):
	\begin{align}
	\sum_{s=1}^{t-1} \frac{L}{\reg s} \cdot \norm{ \frac{1}{m} - (\bP^{t-s})_{i} }_1 
		&\le \frac{L \sqrt{m}}{\reg}  \sum_{s=1}^{t-1} \left( \sqrt{ \lambda_2(\bP)} \right)^{t - s} \frac{1}{s}.
	\label{eq:mixingrate}
	\end{align}

Define $a = \sqrt{\lambda_2(\bP)} \le 1$ and $b = - \log(a) > 0$.  Then we have the following identities:
	\begin{align}
	\sum_{\tau=1}^t \frac{a^{t-\tau+1}}{\tau} 
	= \sum_{\tau=1}^t \frac{a^\tau}{t-\tau+1}
	=  \sum_{\tau=1}^t \frac{\exp(-b\tau)}{t-\tau+1}.
	\end{align}
Now using the fact that when $x > -1$ we have $\exp(-x)< 1/(1+x)$ and using the integral upper bound we get
	\begin{align}
   &	\sum_{\tau=1}^t \frac{a^{t-\tau+1}}{\tau} \notag \\
   	&\le
	\sum_{\tau=1}^t \frac{1}{(1+b\tau)(t-\tau+1)} \notag \\ 
	&\le  
	\frac{1}{(1+b)t} + \int_{1}^{t} \frac{d\tau}{(1+b\tau)(t-\tau+1)} \notag \\
 	&=  
	\frac{1}{(1+b)t} + \left[ \frac{\log(b\tau+1)
		-\log(t-\tau+1)}{bt+b+1} \right]_{\tau=1}^{t} 
		\notag \\
 	&=  
	\frac{1}{(1+b)t} + \frac{\log(bt+1)-\log(b+1)+\log(t)}{bt+b+1} \notag \\
 	&\le   
	\frac{\log(et(bt+1))}{bt} \notag \\
 	&\le   \frac{\log(2bet^2)}{bt}.
	\label{eq:seriesBnd}
	\end{align}
Using \eqref{eq:mixingrate} and \eqref{eq:seriesBnd} in \eqref{eq:devBnd} we get
	\begin{align}
	\norm{\bar{\w}(t)-\w_i(t)} 
	&\le \frac{L\sqrt{m}}{\reg}\frac{\log(2bet^2)}{bt} + \frac{2L}{ \reg t} 
	\notag \\
	&\le \frac{2L\sqrt{m}}{\reg}\frac{\log(2bet^2)}{bt}.
	\label{eq:networkDevBnd}
	\end{align}
Therefore we have 
	\begin{align}
	\sqrt{\E\left[\norm{ \bar{\w}(t)-\w_i(t) }^2 \right]}
		&\le \frac{2L\sqrt{m}}{\reg}\frac{\log(2bet^2)}{bt}.
	\label{eq:avgtoind:normbnd1}
\end{align}

\end{proof}

\emph{Remark:} The network lemma ignores the fact that the data is i.i.d and this leads to a loose dependence on $m$. This is the main weakness of our analysis and in the future we intend to prove a stronger result.

\subsection{Bounds for expected gradient norms \label{subsec:gradBnds}}

\subsubsection{Bounding Gradient at Averaged Iterate}

Let $\sgi{j}{t} \in \partial \ell(\bar{\w}(t)^{\trans}\x_{i,j})$ denote a subgradient for the $j$-th point at node $i$ and $\SGI{t} = (\sgi{1}{t}, \sgi{2}{t}, \ldots, \sgi{n}{t})^{\trans}$ be the vector of subgradients at time $t$.  Let $\Q_{S_i}$ be the $n \times n$ Gram matrix of the data set $S_i$.  From the definition of $\norm{\nabla J_i(\bar{\w}(t))}$ and using the Lipschitz property of the loss functions, we have the following bound:
	\begin{align}
   	&\norm{\nabla J_i(\bar{\w}(t))}^2 
	\notag \\
	&\le \norm{\sum_{j \in S_i }\frac{\sgi{j}{t}\x_{i,j}}{n} + \reg \bar{\w}(t)}^2 
		\notag \\
	&\le
	2 \norm{\sum_{j\in S_i } \frac{\sgi{j}{t} \x_{i,j}}{n}}^2 
		+ 2\reg^2 \norm{\bar{\w}(t)}^2 
		\notag \\
	&= \frac{ 2 \sum_{j \in S_i }\sum_{j^{\prime}\in S_i } 
		\sgi{j}{t} \sgi{j^{\prime}}{t} \x_{i,j}^{\trans} \x_{i,j}^{\prime}  
		}{n^2} 
		+ 2\reg^2 \norm{\bar{\w}(t)}^2 \notag \\
&= \frac{ 2 }{n^2} \SGI{t}^{\trans} \Q_{S_i} \SGI{t}  + 2\reg^2 \norm{\bar{\w}(t)}^2 \notag \\
&\le \frac{2}{n^2} \norm{ \SGI{t} }^2 \spec(\Q_{S_i}) + 2\reg^2 \norm{\bar{\w}(t)}^2 \notag \\
&\le  2L^2 \frac{\spec(\Q_{S_i})}{n} + 2\reg^2 \norm{\bar{\w}(t)}^2.
\label{eq:basicGradBnd}
\end{align}

We rewrite the update \eqref{eq:avgUp} in terms of $\{ \x_{i,t} \}$, the points sampled at the nodes at time $t$:
	\begin{align}
	\bar{\w}(t+1) 
	= \bar{\w}(t)(1-\reg \step_t) 
		- \step_t \sum_{i=1}^{m} 
			\frac{\partial \ell(\w_i(t)^{\trans}\x_{i,t})\x_{i,t}}{m}.
	\label{eq:avgUprecurse}
	\end{align}
Now from equation \eqref{eq:avgUprecurse}, after unrolling the recursion as in Shalev-Shwarz et al.~\cite{SSSC11:pegasos} we see
	\begin{align}
	\bar{\w}(t) 
	= \frac{1}{\reg(t-1)} \sum_{\tau=1}^{t-1} 
		\frac{
		\sum_{i=1}^{m}\partial \ell(\w_i(\tau)^{\trans}\x_{i,\tau})\x_{i,\tau}
		}{m}.
\end{align}
Let $\gamma^i_{\tau} \in \partial \ell(\w_i(\tau)^{\trans}\x_{i,\tau})$ the subgradient set for the $i$th node computed at time $\tau$, then we have
	\begin{align}
	\norm{\bar{\w}(t)} 
	\le 
	\frac{1}{\reg(t-1)} \cdot \frac{1}{m} \sum_{i=1}^{m}  
		\norm{\sum_{\tau=1}^{t-1}\gamma^i_{\tau} \x_{i,\tau}}.
\label{eq:avg:normbnd1}
\end{align}

Let us in turn bound for each node $i$ the term $\norm{\sum_{\tau=1}^{t-1}\gamma^i_{\tau} \x_{i,\tau}}$. Let $\sgt{\tau}^{i} \in \partial \ell(\w_i(\tau)^{\trans}\x_{i,{\tau}})$ denote a subgradient for the point sampled at time $\tau$ at node $i$ and $\SGT^{i} = (\sgt{1}^{i}, \sgt{2}^{i}, \ldots, \sgt{t-1}^{i})^{\trans}$ be the vector of subgradients up to time $t-1$.  We have
	\begin{align}
	\norm{\sum_{\tau=1}^{t-1}\gamma^i_{\tau} \x_{i,\tau}}^2 
	&= \sum_{\tau,\tau^{\prime}} 
		\gamma^i_{\tau} \gamma^i_{\tau^{\prime}} 
		\x_{i,\tau}^{\trans}\x_{i,\tau^{\prime}} 
		\notag \\
	&= ( \mbs{\gamma}^{i} )^{\trans} \Q_{i,t-1} \mbs{\gamma}^{i} 
		\notag \\
	&\le \norm{\mbs{\gamma}^{i}}^2 \spec(\Q_{i,t-1}) 
		\notag \\
	&\le (t-1) L^2\spec(\Q_{i,t-1}),
\end{align}
where $\Q_{i,t-1}$ is the $(t-1) \times (t-1)$ Gram submatrix  corresponding to the points sampled at the $i$-th node until time $t-1$.

Further bounding \eqref{eq:avg:normbnd1}:
	\begin{align*}
	\norm{\bar{\w}(t)}^2 
	&\le \left(\frac{1}{\reg(t-1)} 
		\frac{\sum_{i=1}^{m} \sqrt{(t - 1) L^2\spec(\Q_{i,t-1})}}{m}\right)^2  
		\notag \\
	&\le \frac{L^2}{\reg^2} \left( \frac{1}{m} 
		\sum_{i=1}^{m} \sqrt{ \frac{\spec(\Q_{i,t-1})}{t-1} }
		\right)^2.
	\end{align*}
Since as stated before everything is conditioned on the sample split we take expectations w.r.t the history and the random split and using the Cauchy-Schwarz inequality again, and the fact that the points are sampled i.i.d. from the same distribution,
	\begin{align}
	&\E\left[ \norm{\bar{\w}(t)}^2 \right] 
	\notag \\
	&\le 
	\frac{L^2}{\reg^2} \frac{1}{m^2} \sum_{i=1}^{m} \sum_{j=1}^{m}
		\E\left[  \frac{ \sqrt{\spec(\Q_{i,t-1})\spec(\Q_{j,t-1}) } }{t-1}
			\right] \notag \\
	&\le \frac{L^2}{\reg^2} \frac{1}{m^2}
		\sum_{i=1}^{m} \sum_{j=1}^{m}
		\sqrt{ \E\left[ \frac{ \spec(\Q_{i,t-1})  }{t-1} \right]
			\E\left[ \frac{ \spec(\Q_{j,t-1}) }{t-1} \right] } \notag \\
	&= \frac{L^2}{\reg^2} \E\left[ \frac{ \spec(\Q_{i,t-1})  }{t-1} \right] .
	\label{eq:avg:normbnd2}
	\end{align}
The last line follows from the expectation over the sampling model: the data at node $i$ and node $j$ have the same expected covariance since they are sampled uniformly at random from the total data.

Taking the expectation in \eqref{eq:basicGradBnd} and substituting \eqref{eq:avg:normbnd2} we have
	\begin{align}
   	\E\left[\norm{\nabla J_i(\bar{\w}(t))}^2\right] 
	&\le 
		2L^2 \E\left[\frac{\spec(\Q_{S_i})}{n}\right] 
		\notag \\
		&\hspace{0.5in}
		+ 2L^2 \E\left[\frac{\spec(\Q_{i,t-1})}{t-1} \right].
	\end{align}
Since $S_i$ is a uniform random draw from $S$ and by assuming both $t$ and $n$ to be greater than $4/(3\specnorm^2)\log (d)$, applying Lemma \ref{lem:specnormIntdim} gives us
	\begin{align}
	\E\left[\norm{\nabla J_i(\bar{\w}(t))}^2\right] \le 20L^2\specnorm^2.
	\label{eq:itravgnrmBnd}
	\end{align}

\subsubsection{Bounding Gradient at any Node}

We have just as in the previous subsection
\begin{align*}
\norm{\nabla J_i(\w_i(t))}^2 &\le  2L^2 \frac{\spec(\Q_{S_i})}{n} + 2\reg^2 \norm{\w_i(t)}^2.
\end{align*}
Using the triangle inequality, the fact that $(a_1+a_2)^2 \le 2a_1^2 + 2a_2^2$, the bounds \eqref{eq:networkDevBnd} and \eqref{eq:avg:normbnd2}, and Lemma \ref{lem:specnormIntdim}:
\begin{align}
\E\left[\norm{\w_i(t)}^2\right] &\le 2\E\left[\norm{\w_i(t) - \bar{\w}(t)}^2\right]  + 2\E\left[\norm{\bar{\w}(t)}^2\right] \notag \\
&\le  \frac{8L^2m}{\reg^2}\frac{\log^2(2bet^2)}{b^2 (t-1)^2} + \frac{5L^2\specnorm^2}{\reg^2}.
\label{eq:netBnd}
\end{align}
From \eqref{eq:netBnd} we can infer that for the second term to dominate the first we require
\begin{align*}
\frac{t}{\log(t)} > \sqrt{ \frac{8}{5} } \frac{\sqrt{m}}{\specnorm b}.
\end{align*}
This gives us
\begin{align}
\E\left[\norm{\w_i(t)}^2\right] &\le \frac{10L^2\specnorm^2}{\reg^2},
	\label{eq:localiter:norm}
\end{align}
and therefore
\begin{align}
\E\left[ \norm{\nabla J_i(\w_i(t))}^2 \right] \le 30L^2\specnorm^2.
\label{eq:itrnrmBnd}
\end{align}

\subsection{Intermediate Bound - $1$}

Because the gradients are bounded,
\begin{align*}
&\E \left[ \norm{\sum_{i=1}^{m}\frac{\g_i(t)}{m}}^2 \right] 
\notag \\
&=\E \left[\sum_{i,j}\frac{\g_i(t)^{\trans}\g_i(t)}{m^2} \right] \notag \\
&=\sum_{i=1}^m \frac{\E \left[ \norm{\g_i(t)}^2 \right]}{m^2} + \sum_{i\ne j}\frac{\E \left[\g_i(t)^{\trans}\g_j(t)\right]}{m^2}  \\
&\le \frac{L^2}{m} +  \sum_{i\ne j}\frac{\E \left[\g_i(t)^{\trans}\g_j(t)\right]}{m^2}  \\
&= \frac{L^2}{m} +\frac{\sum_{i \ne j} \E_{\calF_{t-1}} \left[ \E\left[\g_i(t)^{\trans}\g_j(t) |  \calF_{t-1} \right] \right]}{m^2}.
\end{align*}
Now using the fact that the gradients $\g_i(t)$ are unbiased estimates of $\nabla J_i(\w_t)$ and that $\g_i(t)$ and $\g_j(t)$ are independent given past history and inequality \eqref{eq:itrnrmBnd} for node $i$ and $j$ we get
\begin{align}
&\frac{\sum_{i \ne j} \E_{\calF_{t-1}} \left[ \E\left[\g_i(t)^{\trans}\g_j(t) |  \calF_{t-1} \right] \right]}{m^2} 
\notag \\ 
&=\sum_{i \ne j} \frac{\E_{\calF_{t-1}} \left[\nabla J_i(\w_i(t))^{\trans}\nabla J_j(\w_j(t))  \right]}{m^2} \notag \\
&\le \sum_{i \ne j} \frac{\sqrt{\E_{\calF_{t-1}} \left[\norm{\nabla J_i(\w_i(t))}^2\right]}\sqrt{\E_{\calF_{t-1}} \left[\norm{\nabla J_j(\w_j(t))}^2\right]}}{m^2} \notag \\
&
= \frac{(m-1)}{m} \cdot 30L^2\specnorm^2\notag \\
&
\le 30L^2\specnorm^2.
\end{align}
Therefore to bound the term $\mathrm{T2}$ in \eqref{eq:mainBnd} we can use
\begin{align}
\E \left[ \norm{\sum_{i=1}^{m}\frac{\g_i(t)}{m}}^2 \right] 
	\le \frac{L^2}{m} + 30L^2\specnorm^2.
\label{eq:T2Bnd}
\end{align}

\subsection{Intermediate Bound - $2$ }

Applying \eqref{eq:avgtoind:normbnd1}, \eqref{eq:itravgnrmBnd}, and \eqref{eq:itrnrmBnd} to $\mathrm{T3}$ in \eqref{eq:mainBnd}, as well as Lemma \ref{lemma:avgdevBnd} and the fact that $(a_1+a_2)^2\le 2a_1^2 + 2a_2^2$ we obtain the following bound:
\begin{align}
T3 &\le \frac{1}{m} \sum_{i=1}^{m} 
	\sqrt{\E\left[\norm{ \bar{\w}(t)-\w_i(t) }^2 \right]} 
	\notag \\
	&\hspace{0.5in}
	\cdot \sqrt{\E\left[ \left(\norm{\nabla J_i(\w_i(t))} 
				+ \norm{\nabla J_i(\bar{\w}(t))} \right)^2 \right] }
	\notag \\
 &\le \frac{1}{m} \sum_{i=1}^{m} \frac{2L\sqrt{m}}{\mu}\frac{\log(2bet^2)}{bt} \cdot 10L \specnorm \notag \\
 &\le   \frac{20L^2}{\mu} \cdot \frac{\sqrt{m}}{b} \cdot \frac{\log(T)}{t} \cdot \specnorm.
\label{eq:T3Bnd}
\end{align}

\subsection{Combining the Bounds \label{sec:combo_bound}}

Finally combining \eqref{eq:T2Bnd} and \eqref{eq:T3Bnd} in \eqref{eq:mainBnd} and applying the step size assumption $\step_t = 1/(\reg t)$:
\begin{align}
&\E \left[ J(\bar{\w}(t))- J(\w^{*}) \right]  
\notag \\
&\le 
	\frac{(\step_t^{-1}-\reg)}{2}\E \left[ \norm{\bar{\w}(t)-\w^{*}}^2 \right] 
	\notag \\
	&\qquad 
	- \frac{\step_t^{-1}}{2}\E\left[ \norm{\bar{\w}(t+1)-\w^{*}}^2 \right]
	\notag \\
	&\qquad 
	+  \left(\frac{30L^2\specnorm^2}{\reg} + \frac{L^2}{\reg m}\right)\cdot \frac{1}	{t}\notag \\
	&\qquad 
	+  \frac{20L^2}{\reg} \cdot \frac{\sqrt{m}}{b} \cdot \frac{\log(2bet^2)}{t} \cdot \specnorm 
\notag \\
&\le 
	\frac{\reg(t-1)}{2}\E \left[ \norm{\bar{\w}(t)-\w^{*}}^2 \right]  
	\notag \\
	&\qquad
	- \frac{\reg t}{2}\E\left[ \norm{\bar{\w}(t+1)-\w^{*}}^2 \right] 
	+ K_0 \cdot \frac{L^2}{\reg t},
 \label{eq:mainBnd2}
 \end{align}
where $K_0 =  \left(30 \specnorm^2 + 1/m + \left(60 \cdot \sqrt{m\specnorm^2} \cdot \log(T)\right)/b \right)$, using $t\le T$ and assuming $T>2be$.

Let us now define two new sequences, the average of the average of iterates over nodes from $t=1$ to $T$ and the average for any node $i\in [m]$ 
\begin{align}
\hat{\w}(T) &= \frac{1}{T}  \sum_{t=1}^T \bar{\w}(t) \\
\hat{\w}_i(T) &= \frac{1}{T} \sum_{t=1}^T \w_i(t).
\end{align} 
Then summing \eqref{eq:mainBnd2} from $t=1$ to $T$, using the convexity of $J$ and collapsing the telescoping sum in the first two terms of \eqref{eq:mainBnd2},
	\begin{align}
&\E \left[ J(\hat{\w}(T))- J(\w^{*}) \right]  \notag \\
&\le \frac{1}{T} \sum_{t=1}^T \E \left[ J(\bar{\w}(t))- J(\w^{*}) \right] 
	 \notag \\
&\le - \frac{\reg T}{2} \E\left[ \norm{\bar{\w}(T+1)-\w^{*}}^2 \right] + K_0 \cdot \frac{L^2}{\reg} \cdot \frac{\sum_{t=1}^{T} 1/t}{T}  	\notag \\
&\le  K_0 \cdot \frac{L^2}{\reg} \cdot \frac{\log(T)}{T}.
\label{eq:avgofavgBnd} 
\end{align}

Now using the definition of subgradient, Cauchy-Schwarz, and Jensen's inequality we have 
\begin{align}
	&J(\hat{\w}_i(T)) -J(\w^{*}) \notag \\
   &\le	J(\hat{\w}(T))-J(\w^{*}) + \nabla J(\hat{\w}_i(T))^{\trans}(\hat{\w}_i(t) - \hat{\w}(T)) 
	\notag \\
	&\le
	J(\hat{\w}(T))-J(\w^{*}) + \norm{\nabla J(\hat{\w}_i(T)}\norm{\hat{\w}_i(t) - \hat{\w}(T)} 
	\notag \\
	&\le
	J(\hat{\w}(T))-J(\w^{*})  
	\notag \\
	&\hspace{0.5in}
	+ \norm{\nabla J(\hat{\w}_i(T))} \cdot \sum_{t=1}^T \frac{\norm{\w_i(t)-\bar{\w}(t)}}{T}.
	\label{eq:combine:bound1}
\end{align}
To proceed we must bound $\E\left[ \norm{\nabla J(\hat{\w}_i(T))}^2 \right]$ in a similar way as the bound \eqref{eq:itravgnrmBnd}.  First, let $\alpha_{i} = \partial \ell( \hat{\w}_i(T)^{\trans} \x_i)$ denote the subgradient for the $i$-th loss function of $J(\cdot)$ in \eqref{eq:optForm}, evaluated at $\hat{\w}_i(T)$, and $\mbs{\alpha}_T = (\alpha_1, \alpha_2, \ldots, \alpha_N)^{\trans}$ be the vector of subgradients.  As before,
	\begin{align*}
	\norm{ \nabla J(\hat{\w}_i(T)) }^2
	&= \norm{ \frac{1}{N} \sum_{i=1}^{N} \alpha_i \x_i + \reg \hat{\w}_i(T) }^2 \\
	&\le \frac{2}{N^2} \mbs{\alpha}^{\trans} \Q \mbs{\alpha} + 2 \reg^2 \norm{ \hat{\w}_i(T) }^2 \\
	&\le 10 L^2 \specnorm^2 + 2 \reg^2 \norm{ \hat{\w}_i(T) }^2 \\
	&\le 10 L^2 \specnorm^2 + 2 \reg^2 \frac{1}{T} \sum_{t=1}^{T} \norm{ \w_i(t) }^2. 
	\end{align*}
Taking expectations of both sides and using \eqref{eq:localiter:norm} as before:
	\begin{align*}
	\E\left[ \norm{ \nabla J(\hat{\w}_i(T)) }^2 \right]
	&\le 30 L^2 \specnorm^2 .
	\end{align*}
Taking expectations of both sides of \eqref{eq:combine:bound1} and using the Cauchy-Schwarz inequality, \eqref{eq:avgofavgBnd}, the preceding gradient bound, Lemma \ref{lemma:avgdevBnd} and the definition of $K_0$ we get
	\begin{align}
	&\E\left[ J(\hat{\w}_i(T)) -J(\w^{*})  \right] \notag \\
	&\le
	K_0 \cdot \frac{ L^2}{\reg} \cdot \frac{\log (T)}{T}
		+ \frac{2 \sqrt{30} L^2}{\reg} \cdot \frac{\sqrt{m}}{b} \cdot \specnorm \cdot \frac{\log(T)}{T} \cdot \sum_{t=1}^T\frac{1}{t}	\notag \\
	&\le \left(K_0 + \frac{2\sqrt{30}\cdot \sqrt{m\specnorm^2} \cdot \log T}{b}  \right)\cdot \frac{\log T}{T} \notag \\
	&\le \left(30\specnorm^2 + \frac{1}{m} + \frac{70\sqrt{m\specnorm^2}\cdot \log T}{b} \right) 
	\cdot \frac{L^2}{\reg} \cdot \frac{\log T}{T}.
	\end{align}
Recalling that $b=\log(1/\lambda_2(\bP)) \ge 1-\lambda_2(P)$, assuming $T> 2be$ and subsuming the first term in the third and taking expectations with respect to the sample split the above bound can be written as
\begin{align}
\E\left[ J(\hat{\w}_i(T)) -J(\w^{*})  \right] 
&\le \left(\frac{1}{m} + \frac{100\sqrt{m\specnorm^2}\cdot \log T}{1-\lambda_2(P)} \right)
\notag \\
&\hspace{0.5in}
\cdot \frac{L^2}{\reg} \cdot \frac{\log T}{T}.
\end{align}

\section{General Convergence Result}

For Algorithm \eqref{alg:DiSCO} we also establish a general convergence guarantee for arbitrary strongly convex functions (not necessarily dependent on inner products $\w^{\trans}\x$). Specifically we show that

\begin{theorem}
Fix a Markov matrix $\bP$ and let $\ds=\sigma_1(\mbf{\Sigma})$ denote the spectral norm of the covariance matrix of the data distribution. Consider Algorithm \ref{alg:DiSCO} when the objective $J(\w)$ is strongly convex, $\bP(t) = \bP$ for all $t$, and $\eta_t=1/(\mu t)$.  Let $\lambda_2(\bP)$ denote the second largest eigenvalue of $\bP$.  
Then the expected error for each node $i$ satisfies
\begin{align}
\E\left [ J(\bar{\w}_i(T)) -J(\w^{*})  \right ] \leq 12 \frac{L^2}{\mu} \cdot \frac{\sqrt{m}}{1-\lambda_2(\bP)} \cdot  \frac{\log(T) + \log^2(T)}{T}.
\end{align}
\label{theorem:mainThrmDataIndep}
\end{theorem}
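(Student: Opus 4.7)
The plan is to reuse the three-term decomposition \eqref{eq:mainBnd} developed for Theorem~\ref{theorem:mainThrm}, but replace every appeal to the spectral-norm bound of Lemma~\ref{lem:specnormIntdim} with the crude Lipschitz bound $\|\g_i(t)\| \le L$ and $\|\nabla J_i(\w)\| \le L$, which holds without any assumption on the functional form of the losses. Concretely, I would re-derive \eqref{eq:mainBnd} verbatim since that step used only convexity, strong convexity of $J$, Cauchy--Schwarz, and doubly-stochasticity of $\bP$, none of which used the $\w^\top \x$ structure.

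First I would bound T2. Expanding $\E[\|\sum_{i} \g_i(t)/m\|^{2}]$ and using $\|\g_i(t)\| \le L$ together with the trivial bound $\g_i(t)^\top \g_j(t) \le L^2$, one gets $\E[\|\sum_i \g_i(t)/m\|^{2}] \le L^2$, so that $T2 \le L^2/(2\mu t)$. Next, for T3 I would keep Lemma~\ref{lemma:avgdevBnd} untouched, since its proof only uses the Lipschitz bound on the gradients together with the mixing rate of $\bP$, giving
\begin{align*}
\sqrt{\E[\|\bar{\w}(t) - \w_i(t)\|^2]} \le \frac{2L\sqrt{m}}{\mu} \cdot \frac{\log(2bet^2)}{bt}.
\end{align*}
Combined with $\|\nabla J_i(\w_i(t))\| + \|\nabla J_i(\bar{\w}(t))\| \le 2L$, this yields $T3 \le (4L^2\sqrt{m}/\mu) \cdot \log(2bet^2)/(bt)$ where $b = \tfrac{1}{2}\log(1/\lambda_2(\bP))$.

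Plugging these into \eqref{eq:mainBnd} with step-size $\eta_t = 1/(\mu t)$ makes T1 a telescoping term $\tfrac{\mu(t-1)}{2}\E[\|\bar{\w}(t)-\w^*\|^2] - \tfrac{\mu t}{2}\E[\|\bar{\w}(t+1)-\w^*\|^2]$. Summing from $t=1$ to $T$, dividing by $T$, and invoking convexity of $J$ to pass from the time-averaged iterate $\hat{\w}(T)$ to a Jensen-type bound on $\E[J(\hat{\w}(T)) - J(\w^*)]$, the telescoping cancels and I am left with $\mathcal{O}(L^2/\mu) \cdot (\sqrt{m}/b) \cdot \sum_t \log(t)/t / T$, where the sum contributes a $\log^2(T)$ factor, giving the $(\log T + \log^2 T)/T$ rate. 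Finally, to pass from $\E[J(\hat{\w}(T)) - J(\w^*)]$ to the per-node average $\E[J(\hat{\w}_i(T)) - J(\w^*)]$, I would use the same bridging inequality \eqref{eq:combine:bound1}, but bound $\|\nabla J(\hat{\w}_i(T))\| \le L$ directly by Lipschitzness (rather than via the spectral norm of the Gram matrix as in the data-dependent proof), and control the average deviation $\sum_t \|\w_i(t) - \bar{\w}(t)\|/T$ by Lemma~\ref{lemma:avgdevBnd} again, producing another $\sqrt{m}\log^2(T)/(bT)$ contribution. Absorbing constants and using $b \ge 1 - \lambda_2(\bP)$ recovers the stated bound.

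The argument is strictly easier than that of Theorem~\ref{theorem:mainThrm}, so no genuinely new obstacle arises; the only care needed is to track the $\sqrt{m}$ from the network lemma (rather than a $\sqrt{m\rho^2}$) and to make sure the leading constant $12$ emerges after collecting the contributions from T2, T3, and the bridging inequality.
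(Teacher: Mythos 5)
Your proposal is correct and is essentially the paper's own proof: the paper proves this theorem in one sentence by noting that all steps of the proof of Theorem~\ref{theorem:mainThrm} go through unchanged once the data-dependent (spectral-norm) gradient bounds are replaced by the crude Lipschitz bounds, which is precisely the substitution you carry out for T2, T3, and the bridging inequality \eqref{eq:combine:bound1}. The only caveat is the exact constant $12$, which neither you nor the paper tracks explicitly, but the $\sqrt{m}/(1-\lambda_2(\bP))\cdot(\log T+\log^2 T)/T$ form emerges exactly as you describe.
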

\begin{proof}
Following along the proof of Theorem \ref{theorem:mainThrm} we observe that aside from the data-dependence bounds all steps in the proof remain the same and lead to the above result.
\end{proof}

\section{Asymptotic Analysis}

In this section we explore the sub-optimality of distributed primal averaging when $T \rightarrow \infty$ for the case of smooth strongly convex objectives. As discussed before the results of previous sections do not explain the behaviour of Algorithm \eqref{alg:DiSCO} when each node has infinite data (and therefore each node processes a fresh sample at every iteration). In this case we expect to gain from adding more machines in any network. Towards that goal we investigate the behaviour of Consensus SGD in the asymptotic regime and show that the network effect disappears and we can gain from more machines in any network.

Our analysis depends on the asymptotic normality of a variation of Algorithm \eqref{alg:DiSCO} (See Thm. 5 of \cite{BianchiFortHachem:13IEEETrans}). The main differences being that in this case we average the iterates after making the local update. We make the following assumptions for the analysis in this section: (1) The loss function differentials $\{ \partial \left(\ell(\cdot) \right) \}$ are differentiable and $G$-Lipschitz for some $G>0$, (2) the stochastic gradients are of the form $\g_i(t)=\nabla J(\w_i(t)) + \mbs{\xi}_t$ where $\E[\mbs{\xi}_t] = \mbf{0}$ and $\E[\mbs{\xi}_t \mbs{\xi}^{\trans}_t] = \mbf{C}$, and (3) there exists $p>0$ such that $\E\left[ \norm{\mbs{\xi}_t}^{2+p} \right] < \infty$. Apart from smoothness the rest of the assumptions subsume the case of sampling with replacement in Algorithm \eqref{alg:DiSCO} and hold for arbitrary sampled gradient estimates with a covariance $\mbf{C}$.

Our results hold for all smooth strongly convex objectives not necessarily dependent on $\w^{\trans}\x$.

\begin{lemma}   \label{lemma:mainThrmLarge}
Fix a Markov matrix $\bP$. Consider Algorithm \eqref{alg:DiSCO} when the objective $J(\w)$ is strongly convex and twice differentiable, $\bP(t) = \bP$ for all $t$, and $\step_t=1/(\lambda t)$.  
then the expected error for each node $i$ satisfies for a arbitrary split of $N$ samples into $m$ nodes
\begin{align}
\limsup\limits_{T\rightarrow \infty} T \cdot \E\left[J\left(\sum_{j=1}^m \bP_{ij} \w_j(T)\right) - J(\w^{*})\right] \le
\sum_{j \in \mc{N}(i)}(\bP_{ij})^2 \cdot \Tr\left(\mbf{H}\right) \cdot \frac{G}{\reg}
\end{align}
where $\mbf{H}$ is the solution to the equation
\begin{align}
\nabla J^2(\w^{*}) \mbf{H} + \mbf{H} \nabla J^2(\w^{*})^{T} = \mbf{C}.
\end{align}
\end{lemma}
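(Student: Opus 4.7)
The plan is to apply the central limit theorem for distributed stochastic approximation of Bianchi et al.~\cite{BianchiFortHachem:13IEEETrans} (Theorem 5) to the local iterates produced by Algorithm~\ref{alg:DiSCO}, and then combine it with a second-order Taylor expansion of $J$ at $\w^{*}$. Under the stated assumptions (smooth strongly convex $J$, unbiased gradient noise with covariance $\mbf{C}$ and finite $(2+p)$-th moment, and step size $\step_t = 1/(\reg t)$), their result yields that for each node $j$,
\begin{align*}
\sqrt{T}\bigl(\w_j(T) - \w^{*}\bigr) \;\xrightarrow{d}\; \mc{N}(\mbf{0},\, \mbf{H}),
\end{align*}
where $\mbf{H}$ solves the Lyapunov equation $\nabla^2 J(\w^{*}) \mbf{H} + \mbf{H} \nabla^2 J(\w^{*}) = \mbf{C}$. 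Moreover, because each node draws a fresh stochastic gradient at every step, the fluctuation noise across nodes is conditionally independent, so the joint limiting distribution of $\{\sqrt{T}(\w_j(T) - \w^{*})\}_{j=1}^{m}$ decouples into independent Gaussians in the leading order (the consensus coupling enters only through $O(1/T)$ lower-order terms that vanish under the $\sqrt{T}$ rescaling).

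Given this joint CLT, consider the single-step consensus combination $\tilde{\w}_i(T) \eqdef \sum_{j=1}^{m} \bP_{ij} \w_j(T)$. Writing
\begin{align*}
\sqrt{T}\bigl(\tilde{\w}_i(T) - \w^{*}\bigr) \;=\; \sum_{j \in \mc{N}(i)} \bP_{ij} \cdot \sqrt{T}\bigl(\w_j(T) - \w^{*}\bigr),
\end{align*}
and invoking the asymptotic independence of the $\{\w_j(T) - \w^{*}\}$, the rescaled combination is asymptotically Gaussian with covariance $\bigl(\sum_{j \in \mc{N}(i)} \bP_{ij}^2\bigr)\mbf{H}$. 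The factor $\sum_j \bP_{ij}^2 < 1$ (rather than $(\sum_j \bP_{ij})^2 = 1$) is precisely the variance-reduction gain of averaging conditionally independent local estimates, and is the source of the ``more machines helps'' phenomenon in the infinite-data regime.

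Next, since $\nabla J(\w^{*}) = \mbf{0}$ and $\nabla J$ is $G$-Lipschitz, a second-order Taylor expansion gives
\begin{align*}
J(\tilde{\w}_i(T)) - J(\w^{*}) \;=\; \tfrac{1}{2} (\tilde{\w}_i(T)-\w^{*})^{\trans} \nabla^2 J(\w^{*}) (\tilde{\w}_i(T)-\w^{*}) + o\bigl(\|\tilde{\w}_i(T)-\w^{*}\|^{2}\bigr).
\end{align*}
Multiplying by $T$, taking expectations, and passing to the limit via the continuous mapping theorem combined with uniform integrability of $T\|\tilde{\w}_i(T)-\w^{*}\|^{2}$ (which is where the $(2+p)$-moment hypothesis on $\mbs{\xi}_t$ is used), we obtain
\begin{align*}
\limsup_{T\to\infty} T \cdot \E\bigl[J(\tilde{\w}_i(T)) - J(\w^{*})\bigr] \;\le\; \tfrac{1}{2} \sum_{j \in \mc{N}(i)} \bP_{ij}^{2} \cdot \Tr\!\bigl(\nabla^2 J(\w^{*}) \mbf{H}\bigr).
\end{align*}
Finally, using $\|\nabla^2 J(\w^{*})\| \le G$ (Lipschitz gradient) together with the fact that $\nabla^2 J(\w^{*}) \succeq \reg I$ from strong convexity (which controls the implicit $1/\reg$ scaling inherent in $\mbf{H}$ from the Lyapunov equation with step $1/(\reg t)$), yields the claimed bound $\sum_{j\in\mc{N}(i)} \bP_{ij}^{2}\cdot \Tr(\mbf{H})\cdot G/\reg$.

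The principal obstacle is verifying that the Bianchi et al.\ central limit theorem can be invoked verbatim for Algorithm~\ref{alg:DiSCO}: one must check that their regularity conditions (boundedness of moments, smoothness and asymptotic stability of the mean dynamics, Lyapunov structure of the linearization at $\w^{*}$) hold under our hypotheses, and that the consensus averaging does not distort the fluctuation covariance in the scaling limit. Establishing the asymptotic independence of the nodal fluctuations at scale $1/\sqrt{T}$ is the most delicate piece, since the iterates are correlated through $\bP$ at every step; the argument is that these correlations enter only at order $O(1/T)$ and therefore do not appear in the $\sqrt{T}$-CLT. A secondary issue is the uniform-integrability step needed to convert convergence in distribution into convergence of $\E[T\|\cdot\|^{2}]$.
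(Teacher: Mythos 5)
Your proposal follows essentially the same route as the paper's proof: invoke the central limit theorem of Bianchi et al.\ for the local iterates, combine the per-node Gaussian fluctuations with weights $\bP_{ij}$ to obtain the $\sum_{j}(\bP_{ij})^2$ variance-reduction factor, and convert to a suboptimality bound via the quadratic upper bound coming from $G$-smoothness (your Taylor expansion plus $\Tr(\nabla^2 J(\w^{*})\mbf{H})\le G\,\Tr(\mbf{H})$ is equivalent to the paper's direct use of $J(\w)-J(\w^{*})\le \tfrac{G}{2}\norm{\w-\w^{*}}^2$, with the $1/\reg$ arising from the $\sqrt{\reg t}$ normalization of the CLT). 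The step you flag as most delicate — asymptotic independence of the nodal fluctuations at scale $1/\sqrt{T}$, which yields $\sum_j (\bP_{ij})^2$ rather than $(\sum_j \bP_{ij})^2=1$ — is precisely the step the paper also asserts without further justification, so your sketch is faithful to the paper's argument, including its weakest link.
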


\noindent \textit{Remark:}  This result shows that asymptotically the network effect from Theorem \eqref{theorem:mainThrmStoch} disappears and that additional nodes can speed convergence.

An application of Lemma \eqref{lemma:mainThrmLarge} to the problem \eqref{eq:optForm} gives us the following result for the specialized case of a complete graph with constant weight matrix $\bP$.

\begin{theorem}   \label{theorem:mainThrmLargeApplication}
Consider Algorithm \ref{alg:DiSCO} when the objective $J(\w)$ has the form \ref{eq:optForm}
, $\bP(t) = \bP$ and corresponds to a complete graph with uniform weights for all $t$, and $\step_t=1/(\lambda t)$.  
then the expected error for each node $i$ satisfies 
\begin{align}
\limsup\limits_{T\rightarrow \infty} T \cdot \E\left[J\left(\sum_{j=1}^m \bP_{ij} \w_j(T)\right) - J(\w^{*})\right] \le \frac{25\rho L^2}{m} \cdot \Tr\left(\nabla^2 J(\w^{*})^{-1} \right) \cdot \frac{G}{\reg} 
 \end{align}
where the expectation is with respect to the history of the sampled gradients as well as the uniform random splits of $N$ data points across $m$ machines.
\end{theorem}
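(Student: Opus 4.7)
The plan is to apply Lemma \ref{lemma:mainThrmLarge} with the specific choice of $\bP$ corresponding to the complete graph with uniform weights, and then perform a trace computation to pass from the abstract Lyapunov solution $\mbf{H}$ to the quantity $\Tr(\nabla^2 J(\w^*)^{-1})$ that appears in the final bound.

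First I would substitute in the network quantities. For the complete graph with uniform weights, $\bP_{ij} = 1/m$ for every pair $i,j \in [m]$ and $\mc{N}(i) = [m]$, so the graph-dependent factor in Lemma \ref{lemma:mainThrmLarge} collapses to
\begin{align*}
\sum_{j \in \mc{N}(i)} \bP_{ij}^2 \;=\; m \cdot \frac{1}{m^2} \;=\; \frac{1}{m}.
\end{align*}
Plugging this into the lemma reduces the claim to showing that $\Tr(\mbf{H}) \le 25 \rho L^2 \cdot \Tr(\nabla^2 J(\w^*)^{-1})$, where $\mbf{H}$ solves the Lyapunov equation $\nabla^2 J(\w^*)\mbf{H} + \mbf{H} \nabla^2 J(\w^*) = \mbf{C}$ and $\mbf{C}$ is the covariance of the stochastic gradient at $\w^*$.

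Next I would handle the trace of $\mbf{H}$ by diagonalizing the Hessian. Since $J$ is strongly convex, $\nabla^2 J(\w^*) = U \Lambda U^{\trans}$ is symmetric positive definite. Writing $\tilde{\mbf{H}} = U^{\trans} \mbf{H} U$ and $\tilde{\mbf{C}} = U^{\trans} \mbf{C} U$, the Lyapunov equation becomes $\Lambda \tilde{\mbf{H}} + \tilde{\mbf{H}} \Lambda = \tilde{\mbf{C}}$, whose diagonal entries are $\tilde H_{ii} = \tilde C_{ii}/(2\lambda_i)$. Because $\mbf{C}$ is PSD, $\tilde C_{ii} \le \spec(\mbf{C})$, and summing yields the standard trace bound
\begin{align*}
\Tr(\mbf{H}) \;\le\; \frac{\spec(\mbf{C})}{2} \cdot \Tr\!\left(\nabla^2 J(\w^*)^{-1}\right).
\end{align*}
Equivalently this can be obtained by noting $A^{-1/2} \mbf{C} A^{-1/2} \preceq \spec(\mbf{C}) A^{-1}$ with $A = \nabla^2 J(\w^*)$ and taking traces.

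The remaining ingredient is a data-dependent bound on $\spec(\mbf{C})$. The stochastic gradient at $\w^*$ evaluated on a random point $\x$ from node $i$'s local sample has the form $\partial \ell(\w^{*\trans}\x)\x + \reg \w^*$, so because $\ell$ is $L$-Lipschitz, $\mbf{C} \preceq L^2 \hat{\mbs{\Sigma}}_i$ where $\hat{\mbs{\Sigma}}_i$ is the empirical second-moment matrix of the $n$ points assigned to node $i$. Since the $N$ total points are split uniformly at random across the $m$ machines, $\hat{\mbs{\Sigma}}_i$ is exactly an i.i.d.\ empirical second-moment matrix of size $n$, and I can apply Theorem \ref{theorem:specnorm} (the main spectral-norm lemma from Chapter \ref{Chapter2}) to get $\E[\spec(\hat{\mbs{\Sigma}}_i)] \le 5\rho^2$ under the mild condition on $n$ already assumed in the setting. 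Combining this with the trace bound on $\mbf{H}$ and the $1/m$ factor from the complete graph gives the desired bound up to an overall numerical constant.

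The main obstacle I foresee is in interchanging the expectation over the random split with the Lyapunov trace bound, since $\mbf{H}$ depends nonlinearly on the random covariance $\mbf{C}$; the cleanest way is to apply the trace bound conditionally on the split (using only that $\spec(\mbf{C}) \le L^2 \spec(\hat{\mbs{\Sigma}}_i)$ and that $\nabla^2 J(\w^*)$ is deterministic) and to take the expectation only at the very end, where Theorem \ref{theorem:specnorm} yields the factor of $5\rho^2$. This is also the step at which the final numerical constant $25$ is pinned down, and I would want to verify whether the stated power of $\rho$ in the bound should in fact be $\rho^2$ given that $\spec(\mbf{C})$ inherits the full factor $L^2 \rho^2$ from the data covariance.
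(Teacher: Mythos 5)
Your proposal is correct and follows the same skeleton as the paper's proof: invoke Lemma \ref{lemma:mainThrmLarge}, compute $\sum_{j}\bP_{ij}^2 = 1/m$ for the uniform complete graph, reduce $\Tr(\mbf{H})$ to $\tfrac{1}{2}\Tr(\mbf{C}\,\nabla^2 J(\w^*)^{-1}) \le \tfrac{1}{2}\spec(\mbf{C})\Tr(\nabla^2 J(\w^*)^{-1})$, and finish with a data-dependent bound on $\spec(\mbf{C})$. The one genuine divergence is in that last step, and your version is actually tighter. The paper expands $\E[\g_i(t)\g_i(t)^{\trans}]$ for the full regularized gradient at $\w_i(t)$ without cancelling against $\nabla J(\w_i(t))\nabla J(\w_i(t))^{\trans}$, so its bound retains the cross terms $2\reg L\,\E[\norm{\w_i(t)}]$ and $\reg^2\E[\norm{\w_i(t)}^2]$; after substituting the iterate-norm bounds \eqref{eq:localiter:norm} and \eqref{eq:itrnrmBnd} from Chapter \ref{Chapter3} and using $\rho^2\le\rho$, this yields $\spec(\mbf{C})\le 50\rho L^2$ and hence the constant $25\rho L^2$. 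You instead observe that the $\reg\w$ part of the gradient is deterministic given the history, so the centered covariance is just that of $\partial\ell(\w^{\trans}\x)\x$ and satisfies $\mbf{C}\preceq L^2\hat{\mbs{\Sigma}}_i$; Theorem \ref{theorem:specnorm} then gives $\E[\spec(\mbf{C})]\le 5L^2\rho^2$. This answers your own closing question: the cross terms the paper keeps (but which in fact cancel) are exactly what degrade its exponent from $\rho^2$ to $\rho$, and your route delivers the stated bound a fortiori, with the sharper constant $\tfrac{5}{2}L^2\rho^2$ in place of $25\rho L^2$. Your care about applying the trace bound conditionally on the split before taking the outer expectation is also a point on which you are more rigorous than the paper, which simply substitutes the expected bounds; both arguments do rely on the implicit requirement $n > \tfrac{4}{3\rho^2}\log d$ inherited from Lemma \ref{lem:specnormIntdim}.
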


\noindent \textit{Remark:}  For objective \eqref{eq:optForm} we obtain a $1/m$ variance reduction and the network effect disappears.

\subsection{Proof of General Asymptotic Lemma}

\begin{proof}

In the proof we will first show that the iterate of Algorithm \eqref{alg:DiSCO} is asymptotically normal by showing it is close to the iterate of the consensus Algorithm of \citet{BianchiFortHachem:13IEEETrans} and then use the corresponding multivariate normality result of Bianchi et al.~\cite[Theorem 5]{BianchiFortHachem:13IEEETrans}. Finally using smoothness and strong convexity we shall get Lemma \ref{lemma:mainThrmLarge}.  

We need to verify that Algorithm \eqref{alg:DiSCO} satisfies all the assumptions necessary (Assumptions \textbf{1}, \textbf{4}, \textbf{6}, \textbf{7}, \textbf{8a}, and \textbf{8b} in Bianchi et al.~\cite{BianchiFortHachem:13IEEETrans}) for the result to hold.
\begin{itemize}
\item Assumption \textbf{1} requires the weight matrix $\bP(t)$ to be row stochastic almost surely, identically distributed over time, and that $\E[\bP(t)]$ is column stochastic. Our Markov matrix is constant over time and doubly stochastic. Assumption \textbf{1b} follows because $\bP$ is constant and independent of the stochastic gradients, which are sampled uniformly with replacement.
\item Assumption \textbf{4} requires square integrability of the gradients as well as a regularity condition. In our setting, this follows since the sampled gradients are bounded almost everywhere.
\item Assumption \textbf{6} imposes some analytic conditions at the optimum value.  These hold since the gradient is assumed to be differentiable and the Hessian matrix at $\w^{*}$ is positive definite with its smallest eigenvalue is at least $\reg > 0$ (this follows from strong convexity). 
\item Assumption \textbf{7} of Bianchi et al.~\cite{BianchiFortHachem:13IEEETrans} follows from our existing assumptions.
\item Assumptions \textbf{8a} and \textbf{8b} are standard stochastic approximation assumptions on the step size that are easily satisfied by $\step_t = \frac{1}{\reg t}$.
\end{itemize} 

It is easy to show that the average over the nodes of the iterates $\tilde{\w}_i(t)$, $\w_i(t)$ for Algorithm \eqref{alg:DiSCO} and \eqref{alg:DiSCO_large} respectively are the same and satisfy
\begin{align}
\bar{\tilde{\w}}(t+1) = \bar{\tilde{\w}}(t) - \eta_t \frac{\sum_{i=1}^m \g_i(t)}{m}  \notag \\ 
\bar{\w}_i(t+1) = \bar{\w}_i(t+1) - \eta_t \frac{\sum_{i=1}^m \g_i(t)}{m}  
\label{eq:avgIterates}
\end{align}

Now note that 
\begin{align}
\w_i(t) - \w^{*} = \underbrace{\w_i(t) - \bar{\w}_i(t)}_{\textbf{T1=Network Error}} + \underbrace{\bar{\w}_i(t) - \w^{*}}_{\textbf{T2=Asymptotically Normal}}
\label{eq:NetworkErrAsymNormal}
\end{align}

From Lemma \ref{lemma:avgdevBnd} we know that the network error (\textbf{T1}) decays and from update equation \eqref{eq:avgIterates} we know that the averaged iterate for Algorithm \eqref{alg:DiSCO} and consensus Algorithm of \citet{BianchiFortHachem:13IEEETrans} are the same. Then the proof of Theorem $5$ of Bianchi et al.~\cite{BianchiFortHachem:13IEEETrans} shows that the term \textbf{T2}, under the above assumptions when appropriately normalized converges to a centered Gaussian distribution. Equation \eqref{eq:NetworkErrAsymNormal} then implies
\begin{align}
\sqrt{\reg t}\left(\w_i(t) - \w^{*}\right) \sim \mc{N}\left(\mbf{0} , \mbf{H}\right )
\label{eq:multinormDist}
\end{align}
where $\mbf{H}$ is the solution to the equation
\begin{align}
\nabla J^2(\w^{*}) \mbf{H} + \mbf{H} \nabla J^2(\w^{*})^{T} = \mbf{C}.
\end{align}
Let $\mbf{Y} \sim \mc{N}(\mbf{0},\mc{\mbf{I}})$, so we can always write for any $\mbf{X} \sim \mc{N}(\mbf{0},\mbf{\mbf{H}})$
\begin{align}
\mbf{X} = \mbf{Y} \mbf{H}^{1/2},
\end{align}
and thus
\begin{align}
\norm{\mbf{X}}^2 = \mbf{Y}^{\trans}\mbf{H}\mbf{Y}.
\end{align}
Then it is well known that $\norm{\mbf{X}}^2 \sim \chi^2(\Tr(\mbf{H}))$ and so $\E\left[ \norm{\mbf{X}}^2 \right] = \Tr(\mbf{H})$

Let us now consider the suboptimality at the iterate $\sum_{j=1}^m \bP_{ij} \w_j(t)$. It is easy to see that for a differentiable and strongly convex function
\begin{align}
J\left(\sum_{j=1}^m \bP_{ij} \w_j(t)\right) - J(\w^{*}) &\leq \frac{G}{2} \norm{\sum_{j=1}^m \bP_{ij} \w_j(t) - \w^{*}}^2.
\label{eq:suboptAsym}
\end{align}

Now it is easy to see from \eqref{eq:multinormDist} that for a node $j \in \mc{N}(i)$
\begin{align}
\bP_{ij}\sqrt{\reg t}\left(\w_j(t) - \w^{*}\right) \sim \mc{N}\left(\mbf{0} , (\bP_{ij})^2 \mbf{H}\right ).
\end{align}
This implies that
\begin{align}
\sum_{j \in \mc{N}(i)}\bP_{ij}\sqrt{\reg t}\left(\w_j(t) - \w^{*}\right) \sim \mc{N}\left(\mbf{0} ,  \left(\sum_{j \in \mc{N}(i)}(\bP_{ij})^2\right) \mbf{H} \right ).
\label{eq:asmypDist}
\end{align}
Then taking expectation w.r.t to the distribution \eqref{eq:asmypDist} and using standard properties of norms of multivariate normal variables,
\begin{align}
&\E\left[\norm{\sum_{j \in \mc{N}(i)}\bP_{ij}\sqrt{\reg t}\left(\w_j(t) - \w^{*}\right)}^2\right]  \notag \\
&=\left(\sum_{j \in \mc{N}(i)}(\bP_{ij})^2\right) \Tr\left(\mbf{H}\right).
\end{align}

Then substituting in bound \eqref{eq:suboptAsym} and taking the limit we finally get
\begin{align}
&\limsup\limits_{T\rightarrow \infty} T\cdot \E\left[J\left(\sum_{j=1}^m \bP_{ij} \w_j(T)\right) - J(\w^{*})\right] \notag \\
&\le \sum_{j \in \mc{N}(i)}(\bP_{ij})^2 \cdot \Tr\left(\mbf{H}\right) \cdot \frac{G}{\reg}.
\end{align}

\end{proof}

\subsection{Proof of Asymptotic Result for $\ell_2$-regularized objectives}
\begin{proof}
The the covariance of the gradient noise under the sampling with replacement model is
\begin{align}
\mathbf{C} &= \E\left[\g_i(t)\g_i(t)^{\trans}\right] - \nabla J(\w_i(t)) \nabla J(\w_i(t))^{\trans} \notag \\
  &= \frac{\sum_{i=1}^N \beta_{i,t}\x_i\x_i^{T}}{N} + \frac{\mu}{N}\sum_{i=1}^N \beta_{i,t} \left(\x_i\w_i(t)^{\trans} +  \w_i(t)\x_i^{\trans}\right) \notag \\
  &+ \mu^2 \w_i(t)\w_i(t)^{\trans} - \nabla J(\w_i(t)) \nabla J(\w_i(t))^{\trans} \notag \\
\end{align}

Thus we can bound the spectral norm of $\mathbf{C}$ as
\begin{align}
\spec(\mathbf{C}) &\leq L^2 \rho^2 + 2\mu L \E\left[\norm{\w_i(t)}\right] + \mu^2 \E\left[\norm{\w_i(t)}^2\right] \notag \\
                 &+ \E\left[\norm{\nabla J(\w_i(t))}^2\right]
\end{align}

Now from bound \eqref{eq:localiter:norm} since $T \rightarrow \infty$ we have 
\begin{align*}
&\E\left[\norm{\w_i(t)}^2\right] \le \frac{10L^2\specnorm^2}{\reg^2} \notag \\ 
&\E\left[ \norm{\nabla J_i(\w_i(t))}^2 \right] \le 30L^2\specnorm^2
\end{align*}

Putting everything together we get
\begin{align}
\spec(\mathbf{C}) \leq 50\rho L^2.
\label{eq:noiseCov}
\end{align}

Next note that $H = C\left(\nabla^2 J(\w^{*})\right)^{-1}/2$.  From the completeness and uniform weight assumptions on the graph, we have
\begin{align}
\sum_{j \in \mc{N}(i)}(\bP_{ij})^2 = \frac{1}{m}
\end{align}. 

Thus substituting in Lemma \eqref{lemma:mainThrmLarge}, using \eqref{eq:noiseCov} gives us 
\begin{align}
&\limsup\limits_{t\rightarrow \infty} t \cdot \E\left[J\left(\sum_{j=1}^m \bP_{ij} \w_j(t)\right) - J(\w^{*})\right] 
\notag \\
&\le \frac{1}{m} \cdot \frac{\Tr\left(\left(\mbf{C} \nabla^2 J(\w^{*})\right)^{-1} \right)}{2} \cdot \frac{G}{\reg} \notag \\
&\le \frac{25\rho L^2}{m} \cdot \Tr\left(\nabla^2 J(\w^{*})^{-1} \right) \cdot \frac{G}{\reg} \notag \notag \\
\label{eq:finalbndlarge}
\end{align}
\end{proof}

\begin{figure*}
\centering
\includegraphics[width=3.2in]{../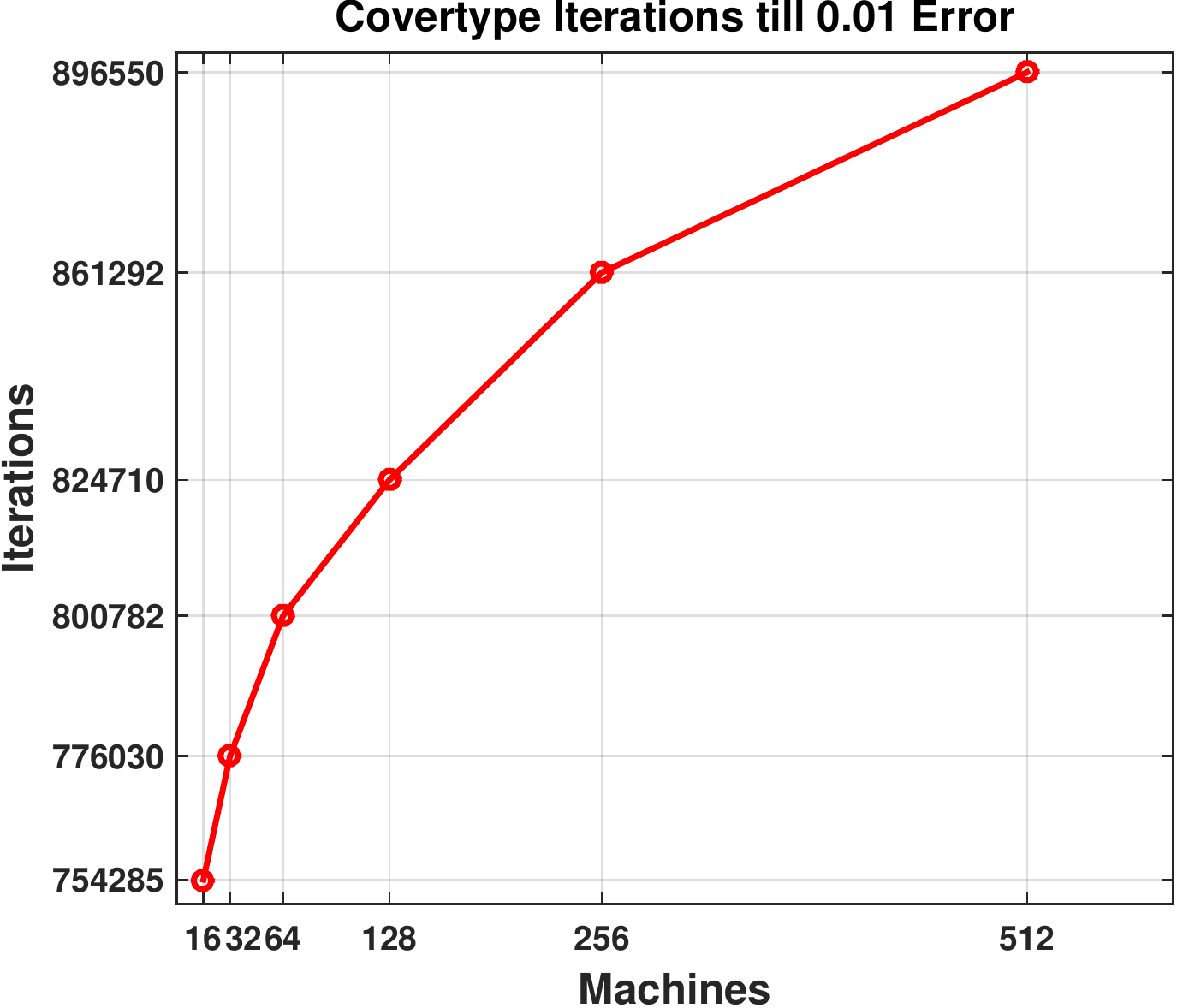}
\includegraphics[width=3in]{../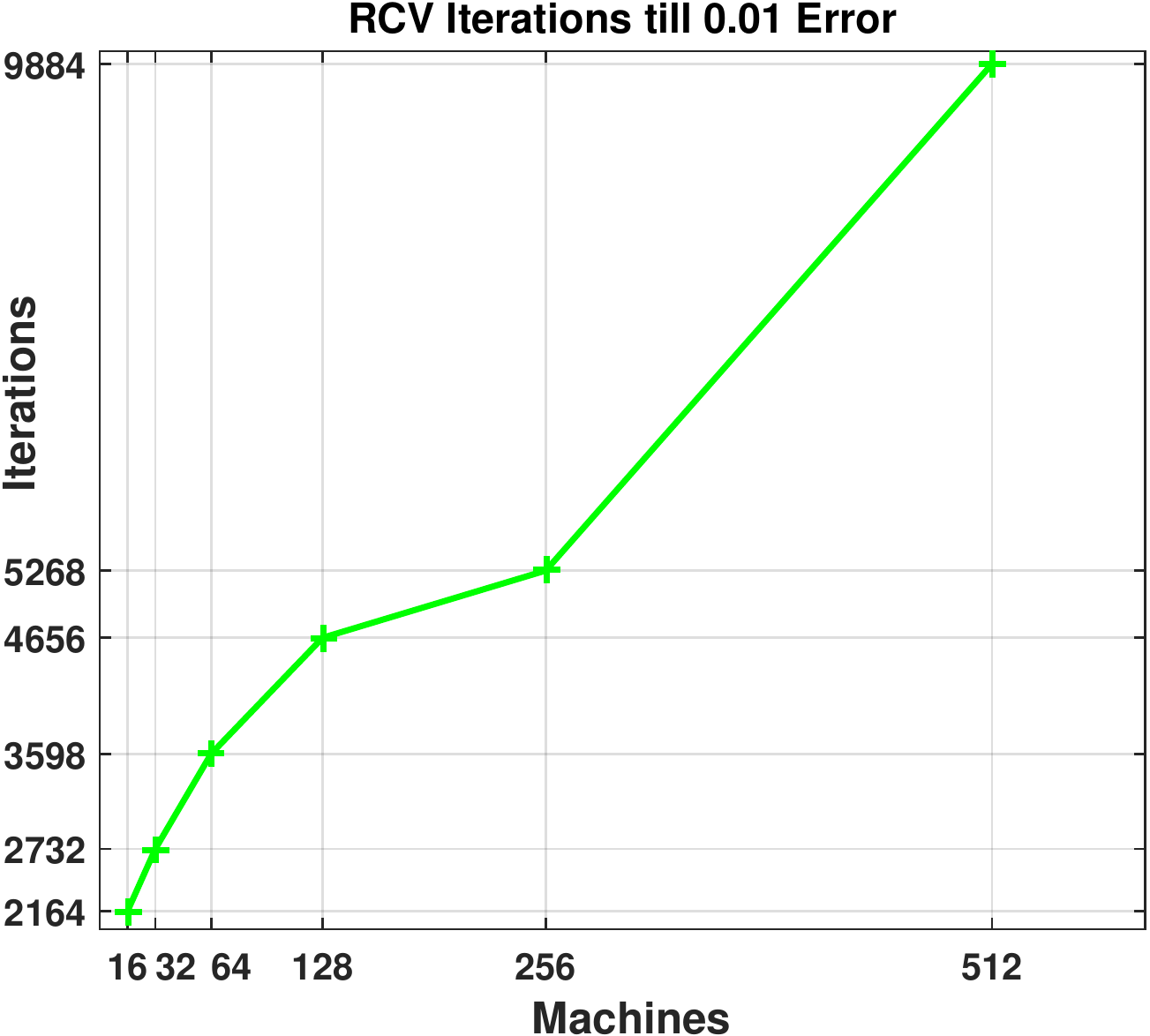}
\caption{ Iterations of Algorithm \eqref{alg:DiSCO} till $\epsilon=0.01$ error on datasets with very different $\specnorm^2$. The performance decay for increasing $m$ is worse for larger $\specnorm^2$. (\ctype~with $\specnorm^2=0.21$ and \rcv~with $\specnorm^2=0.013$)}
\label{fig:rcvcovIters}
\end{figure*}

\removed{
\begin{figure*}[t]
\centering
\includegraphics[width=3.4in]{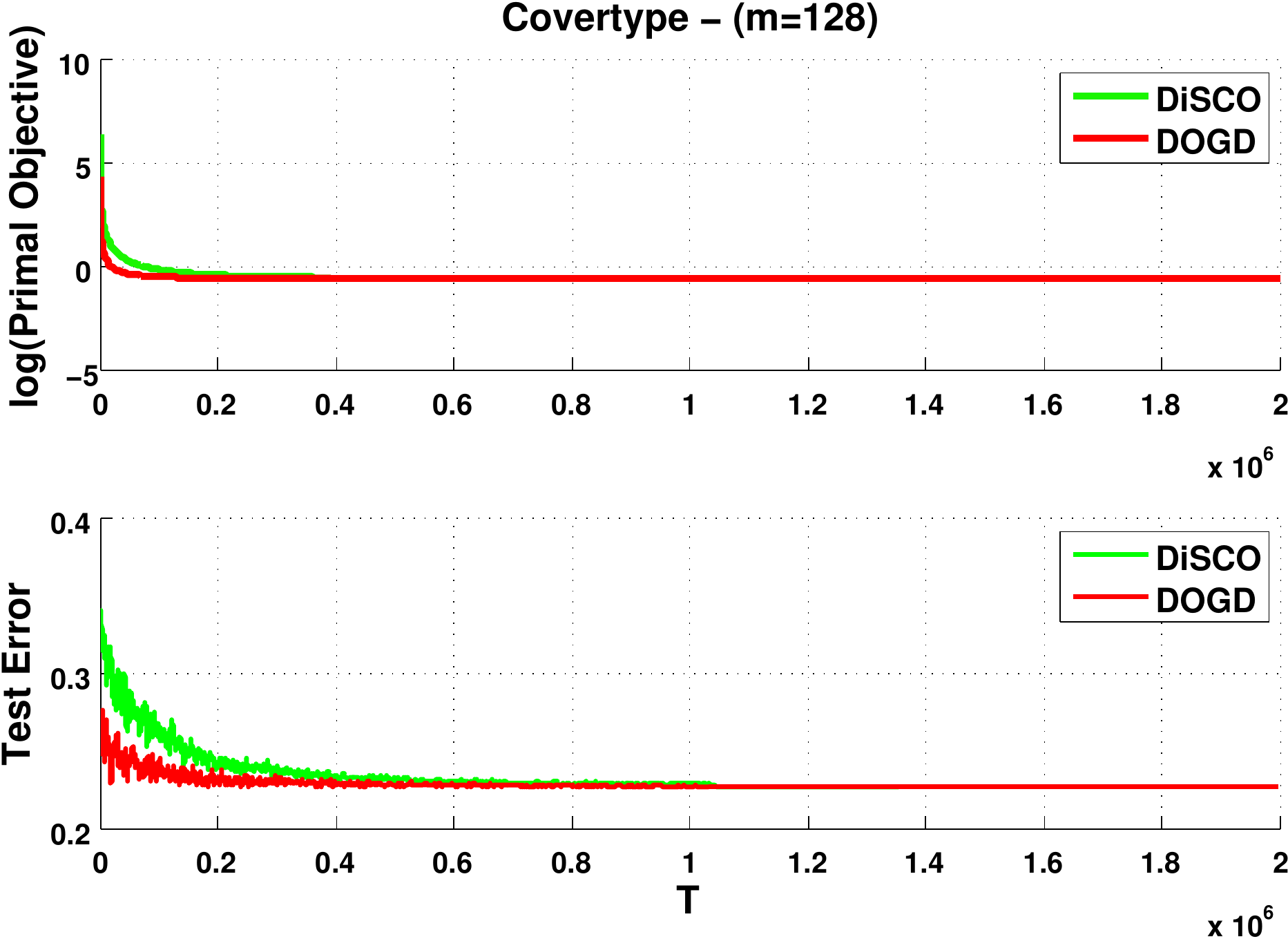} 
\label{fig:dist:cov}\includegraphics[width=3.4in]{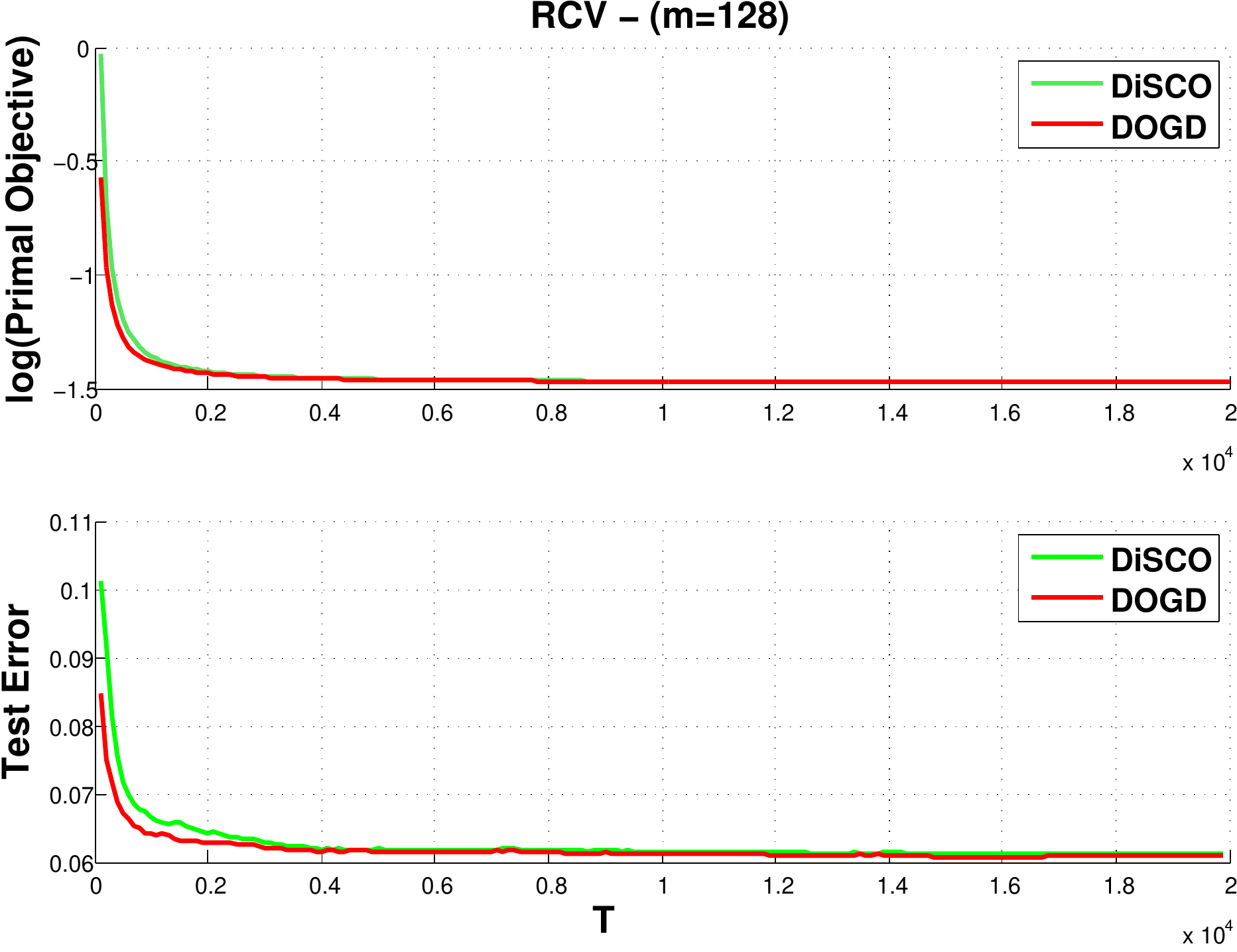}
\label{fig:dist:rcv}
\caption{ Performance of Consensus SGD against \textsc{DoGD} on datasets with very different $\specnorm^2$. The algorithm performs competitively especially for smaller $\specnorm^2$. (\ctype~with $\specnorm^2=0.21$ and \rcv~with $\specnorm^2=0.013$)}
\label{fig:DiSCOvsDODG}
\end{figure*}
\newpage
}
\section{Empirical Results}

\subsection{Performance of as a function of $\specnorm^2$}

In the first set of experiments we look at the number of iterations till $\epsilon=0.01$ error as we vary the number of machines. We looked at the objective after a fixed number of iterations and computed the number of iterations for different $m$s to reach this point. The network considered is a bounded degree expander with Metropolis-Hastings weights. We can see in Figure \ref{fig:rcvcovIters} that the iterations for dataset with a smaller $\specnorm^2$ grow slower as we spread the data across more machines.

\begin{figure}[t]
\centering
\scalebox{0.95}{
\includegraphics[width=4.5in]{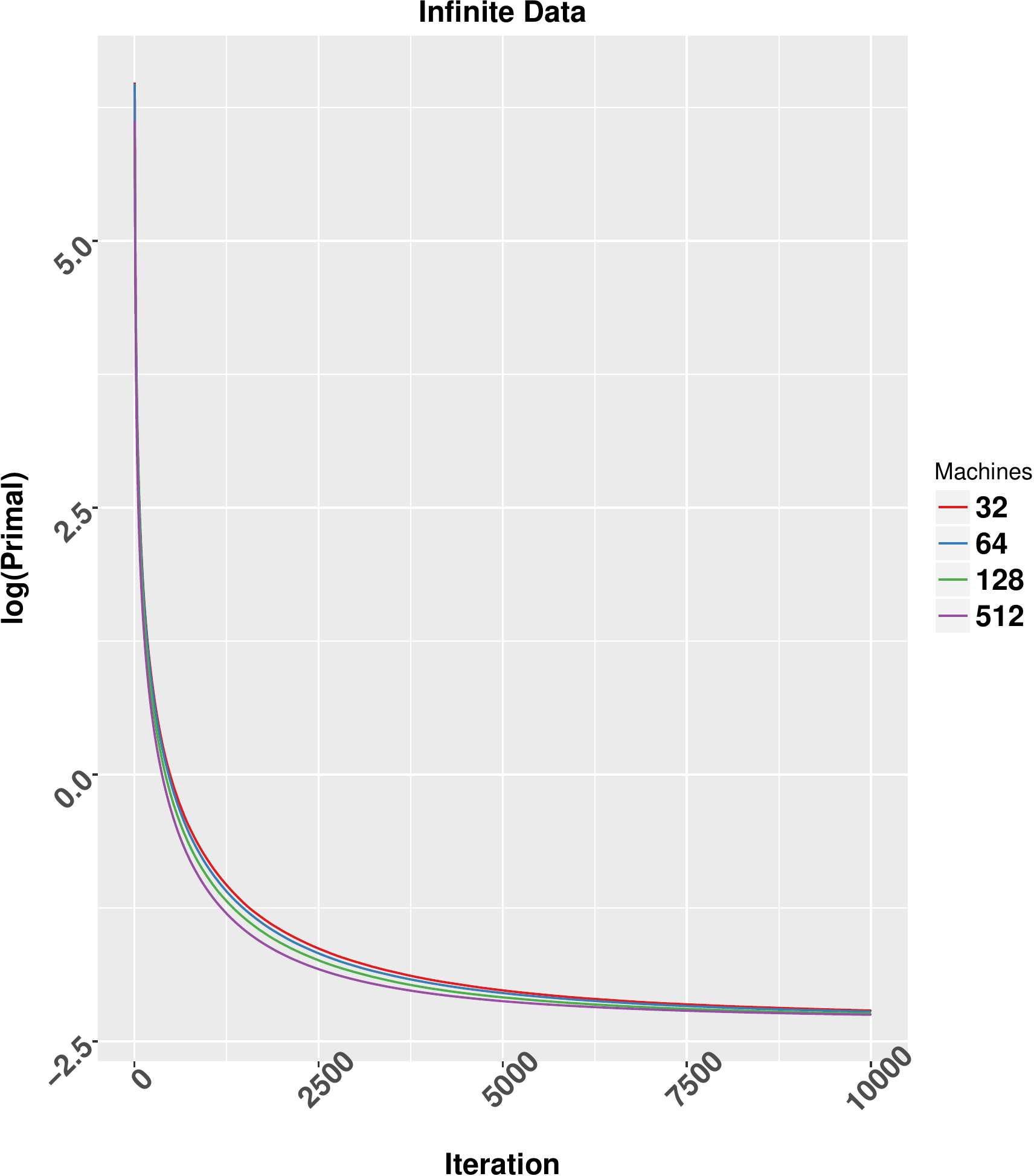}}
\label{fig:inifnite}
\caption{No network effect in the case of infinite data.}
\label{fig:Infinite}
\end{figure}
\removed{
\subsection{Comparison Against \textsc{DoGD}} 
 The next question we answer pertains to the competitiveness of Algorithm \ref{alg:DiSCO} in a real setting. We show that the algorithm is competitive with a recently proposed method and the corresponding convergence guarantees give us a theoretical understanding of when it is expected to perform better.

To do so we compare Algorithm \ref{alg:DiSCO} to the distributed method presented in \citep{DistStronglyConvex}(referred to here as \textsc{DoGD}- Distributed Online Gradient Descent). They are quite similar and differ essentially in how the step-sizes decay (constant over epoch and decaying vs strictly decaying). As shown in the theoretical results we expect Algorithm  \ref{alg:DiSCO} to perform better on datasets with a smaller spectral norm (\astro~and \rcv) and we can see in Figure \ref{fig:DiSCOvsDODG} that is the case, while on \ctype~\textsc{DoGD} performs slightly better (especially on test error). In general, we can see that tuning Algorithm \ref{alg:DiSCO} appropriately gives competitive performance.
}
\subsection{Infinite Data} 

To provide some empirical evidence of the fact that for infinite data from the same distribution we shouldn't see any degradation in the error(for growing $m$), we generate a very large ($N=10^7$) synthetic dataset from a multivariate Normal distribution and created a simple binary classification task using a random hyperplane. As we can see in Figure \ref{fig:Infinite} for the SVM problem and a $k$-regular network we continue to gain as we add more machines and then eventually we stabilize but never lose from more machines. 

\section{Summary}

We analyzed a primal averaging based SGD algorithm and showed that the convergence rate depends on the spectral norm of the sample covariance. This analysis reveals a tradeoff of the data distribution with network parameters that is missing from previous analyses of consensus based optimization algorithms. Additionally we looked at the asymptotic regime when the loss function is smooth and strongly convex. The analysis shows that the network effect disappears in this regime and we can gain from adding more machines in a arbitrary network. In the next chapter we analyze the effect of intermittent communication and show how data-dependence can help us.



\chapter{General Protocols and Sparse Communication} 
\label{Chapter4}
\lhead{Chapter 4. \emph{Communication Protocols}} 

To circumvent the high communication frequency requirements in the scheme of Chapter \eqref{Chapter3} we now establish a general convergence result that can allow for varied communication protocols such as asynchronous and intermittent communication. The ensuing convergence analysis will reveal how certain data distributions allow for relaxed communication regimes. Particularly we generalize our analysis in Chapter  \eqref{Chapter3} to include time-varying and stochastic communication matrices $\bP(t)$.   We study the case where the matrices are chosen i.i.d.~over time.  A special case is the strategy when the network alternates between some fixed $\bP$ and the identity matrix, a situation we refer to as \textit{intermittent connectivity}. Intermittent communication can result in slower convergence because the gap between the local node estimates and their average is larger.  We call this the network error.  Our goal is to show how knowing $\rho^2$ can help us balance the network error and optimality gap.

\removed{A subcase of intermittent communication is the case when the frequency of communication decreases with iterations. Underlying this approach is the intuition that when the local iterates are all close to the optimum they shouldn't need to communicate often. The convergence result for this diminishing communication regime doesn't follow from the analysis of general stochastic communication matrices since the identically distributed assumption is violated and requires a different approach, as we shall see later. }

\section{Stochastic Communication}

\subsection{General Protocols}
We adapt the proof of the the general communication protocol convergence from the distributed dual averaging approach of ~\citep{dualAveraging}. \removed{However, as mentioned before this analysis doesn't apply to the diminishing communication protocol. }

In the remainder of the chapter we first establish the network error lemma for general communication schemes and then go on to apply the proof to the case of intermittent regimes. We follow up the theoretical analysis by showing the performance of the various schemes on different datasets defined in Chapter \eqref{Chapter1}.
\begin{lemma}\label{lemma:avgdevBndStoch}
Fix a Markov matrix $\bP$ and consider Algorithm \eqref{alg:DiSCO} when the objective $J(\w)$ is strongly convex \removed{and the number of iterations \$T$ satisfies
\begin{align}
	T &> 2 e  \log(1/\sqrt{ \lambda_2(\bP)}) 
	\end{align} }
we have the following inequality for the expected squared error between the iterate $\w_i(t)$ at node $i$ at time $t$ and the average $\bar{\w}(t)$ defined in Algorithm \eqref{alg:DiSCO}:
	\begin{align}
	\sqrt{\E\left[\norm{ \bar{\w}(t)-\w_i(t) }^2 \right ]} \leq \frac{2L}{\mu}\cdot \frac{\sqrt{m}}{b }\cdot\frac{\log(2bet^2)}{t},
	\end{align}
where $b= (1/2)\log(1/\lambda_2(\bP))$.
\end{lemma}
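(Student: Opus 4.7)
The plan is to mirror the proof of the corresponding deterministic lemma from Chapter~\ref{Chapter3}, replacing powers $\bP^{t-s}$ by products $\bP(t)\bP(t-1)\cdots\bP(s+1)$ of the i.i.d.\ doubly stochastic matrices and tracking the extra expectation over the randomness in the communication. First I would set $\mathbf{W}(t)$ and $\mathbf{G}(t)$ to be the $m\times d$ matrices whose rows are $\w_i(t)$ and $\g_i(t)$, so that the algorithm reads $\mathbf{W}(t+1) = \bP(t)\mathbf{W}(t) - \eta_t \mathbf{G}(t)$. Unrolling this recursion exactly as before (using $\mathbf{W}(1)=\mathbf{0}$) and subtracting the node average yields
\begin{align*}
\bar{\mathbf{W}}(t+1) - \mathbf{W}(t+1)
= -\sum_{s=1}^{t-1} \eta_s \left(\tfrac{1}{m}\mathbf{1}\mathbf{1}^{\top} - \mathbf{\Phi}(t,s)\right) \mathbf{G}(s)
- \eta_t\left(\tfrac{1}{m}\mathbf{1}\mathbf{1}^{\top} - I\right)\mathbf{G}(t),
\end{align*}
where $\mathbf{\Phi}(t,s) \eqdef \bP(t)\bP(t-1)\cdots\bP(s+1)$ replaces the power $\bP^{t-s}$ that appeared in the fixed-matrix proof.

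Next I would take the norm of the $i$-th row, use the Lipschitz bound $\norm{\g_j(s)}\le L$, and apply the triangle inequality to get
\begin{align*}
\norm{\bar{\w}(t)-\w_i(t)} \le \sum_{s=1}^{t-1} \frac{L}{\mu s}\,\norm{\tfrac{1}{m}\mathbf{1}^{\top} - \mathbf{\Phi}(t,s)_{i,\cdot}}_1 + \frac{2L}{\mu t}.
\end{align*}
The substantive step is then a bound on $\E[\,\norm{\tfrac{1}{m}\mathbf{1}^{\top} - \mathbf{\Phi}(t,s)_{i,\cdot}}_1\,]$ in the i.i.d.\ regime. Because the $\bP(s)$ are i.i.d.\ doubly stochastic with $\lambda_2(\E[\bP(s)^{\top}\bP(s)]) = \lambda_2(\bP)$ (the definition of the scalar $\lambda_2(\bP)$ used in the statement), a standard consensus calculation shows that $\E[\,\norm{\tfrac{1}{m}\mathbf{1}\mathbf{1}^{\top}-\mathbf{\Phi}(t,s)}_2\,] \le \sqrt{m}\,(\sqrt{\lambda_2(\bP)})^{t-s}$, which combined with Jensen's inequality gives the same per-term geometric decay as in the deterministic case.

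Plugging this back and applying Jensen again to pull the expectation inside the square root, I obtain
\begin{align*}
\sqrt{\E\left[\norm{\bar{\w}(t)-\w_i(t)}^2\right]}
\le \frac{L\sqrt{m}}{\mu}\sum_{s=1}^{t-1} \frac{(\sqrt{\lambda_2(\bP)})^{t-s}}{s} + \frac{2L}{\mu t}.
\end{align*}
From here the series bound \eqref{eq:seriesBnd} from Chapter~\ref{Chapter3} with $a=\sqrt{\lambda_2(\bP)}$, $b=-\log(a)=\tfrac{1}{2}\log(1/\lambda_2(\bP))$ applies verbatim, yielding the claimed $\frac{2L}{\mu}\cdot\frac{\sqrt{m}}{b}\cdot\frac{\log(2bet^2)}{t}$ after absorbing the additive $2L/(\mu t)$ into the logarithmic term. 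The main obstacle I expect is the mixing-rate step: unlike the deterministic case where $\norm{\tfrac{1}{m}\mathbf{1}\mathbf{1}^{\top}-\bP^{k}}_2 \le (\sqrt{\lambda_2(\bP)})^{k}$ is immediate, the i.i.d.\ version requires carefully computing $\E[\,\mathbf{\Phi}(t,s)^{\top}(I - \tfrac{1}{m}\mathbf{1}\mathbf{1}^{\top})\mathbf{\Phi}(t,s)\,]$ and using independence across time to get a clean geometric contraction with the right constant; this is where one must invoke (or re-derive) the Duchi--Agarwal--Wainwright spectral bound for products of random doubly stochastic matrices.
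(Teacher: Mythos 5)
Your proposal is correct in outline, but it takes a genuinely different (and more honest) route than the paper. The paper's own proof of this lemma is, almost word for word, the proof of the fixed-matrix network-error lemma from Chapter~\ref{Chapter3}: it explicitly writes ``using the fact that $\bP(t)=\bP$ for all $t$'' and works with powers $\bP^{t-s}$ and the deterministic mixing bound $\norm{\frac{1}{m}\mbf{1}-(\bP^{t-s})_i}_1 \le \sqrt{m}\,(\sqrt{\lambda_2(\bP)})^{t-s}$, so it never actually engages with the random sequence $\{\bP(t)\}$ that the surrounding section (and Theorem~\ref{theorem:mainThrmStoch}, which is stated in terms of $\lambda_2(\E[\bP^2(t)])$) is about. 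You instead unroll the recursion with the products $\mathbf{\Phi}(t,s)=\bP(t)\cdots\bP(s+1)$ and derive the geometric contraction $\E\bigl[\norm{(\frac{1}{m}\mbf{1}\mbf{1}^{\trans}-\mathbf{\Phi}(t,s))^{\trans}e_i}^2\bigr] \le \lambda_2(\E[\bP^{\trans}\bP])^{t-s}$ by conditioning and using independence across time; this is exactly the extra ingredient the paper's argument silently skips, and it is what makes the lemma applicable to the i.i.d.\ communication model. Two small points of care in executing your plan: (i) since the row-$\ell_1$ term is now random, you should pass from $\E[\norm{\bar{\w}(t)-\w_i(t)}^2]$ to the per-term bounds via the $L^2$ triangle (Minkowski) inequality, i.e. $\sqrt{\E[(\sum_s a_s)^2]} \le \sum_s \sqrt{\E[a_s^2]}$ with $\E[\norm{\cdot}_1^2]\le m\,\lambda_2^{t-s}$, rather than a plain Jensen step on the first moment; and (ii) the gradient matrices $\mbf{G}(s)$ are not independent of $\mathbf{\Phi}(t,s)$, so you should (as you implicitly do) use the pathwise bound $\norm{\g_j(s)}\le L$ before taking any expectation. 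With those two details in place, the series bound \eqref{eq:seriesBnd} applies verbatim and yields the stated constant, so your argument both recovers the lemma and fixes the gap in the paper's copy-pasted proof.
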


\begin{proof}
We follow a similar analysis as others \citep[Prop. 3]{nedicDistributedOptimization}~\citep[IV.A]{dualAveraging}~\citep{DistStronglyConvex}.  Let $\mbf{W}(t)$ be the $m \times d$ matrix whose $i$-th row is $\w_i(t)$ and $\mbf{G}(t)$ be the $m \times d$ matrix whose $i$-th row is $\g_i(t)$ .  Then the iteration can be compactly written as
	\[
	\mbf{W}(t+1) = \bP(t) \mbf{W}(t) - \eta_t \mbf{G}(t)
	\]
and the network average matrix $\bar{\mbf{W}}(t) = \frac{1}{m} \mbf{1} \mbf{1}^{\top} \mbf{W}(t)$.  Then we can write the difference using the fact that $\bP(t) = \bP$ for all $t$:
	\begin{align}
	\bar{\mbf{W}}(t+1) - \mbf{W}(t+1) 
		&= \left(  \frac{1}{m} \mbf{1} \mbf{1}^{\top} - I \right) \left( \bP \mbf{W}(t) - \eta_t \mbf{G}(t) \right) \notag \\
		&= \left(  \frac{1}{m} \mbf{1} \mbf{1}^{\top} - \bP \right) \mbf{W}(t)
			- \eta_t \left(  \frac{1}{m} \mbf{1} \mbf{1}^{\top} - I \right) \mbf{G}(t) \notag \\
		&= \left(  \frac{1}{m} \mbf{1} \mbf{1}^{\top} - \bP \right) \left( \bP \mbf{W}(t-1) - \eta_{t-1} \mbf{G}(t-1) \right)
			\notag \\
			&\hspace{1in} - \eta_t \left(  \frac{1}{m} \mbf{1} \mbf{1}^{\top} - I \right) \mbf{G}(t) \notag \\
		&= \left(  \frac{1}{m} \mbf{1} \mbf{1}^{\top} - \bP^2 \right) \mbf{W}(t-1)
			- \eta_{t-1} \left(  \frac{1}{m} \mbf{1} \mbf{1}^{\top} - \bP \right) \mbf{G}(t-1)
			\notag \\
			&\hspace{1in} - \eta_t \left(  \frac{1}{m} \mbf{1} \mbf{1}^{\top} - I \right) \mbf{G}(t) \notag \\
		&= \left(  \frac{1}{m} \mbf{1} \mbf{1}^{\top} - \bP^2 \right) \mbf{W}(t-1)
			- \sum_{s=t-1}^{t} \eta_s \left(  \frac{1}{m} \mbf{1} \mbf{1}^{\top} - \bP^{t-s} \right) \mbf{G}(s).
	\end{align}
Continuing the expansion and using the fact that $\mbf{W}(1) = \mbf{0}$,
	\begin{align}
	\bar{\mbf{W}}(t+1) - \mbf{W}(t+1) &= \left( \frac{1}{m} \mbf{1} \mbf{1}^{\top} - \bP^t \right)  \mbf{W}(1)
		- \sum_{s=1}^{t} \eta_s \left(  \frac{1}{m} \mbf{1} \mbf{1}^{\top} - \bP^{t-s} \right) \mbf{G}(s) \notag \\
		&= - \sum_{s=1}^{t} \eta_s \left(  \frac{1}{m} \mbf{1} \mbf{1}^{\top} - \bP^{t-s} \right) \mbf{G}(s) \notag \\
		&= - \sum_{s=1}^{t-1} \eta_s \left(  \frac{1}{m} \mbf{1} \mbf{1}^{\top} - \bP^{t-s} \right) \mbf{G}(s) - \eta_t \left(  \frac{1}{m} \mbf{1} \mbf{1}^{\top} - I \right) \mbf{G}(t).
			\label{eq:more:neterr_matrix}
	\end{align}
Now looking at the norm of the $i$-th row of \eqref{eq:more:neterr_matrix} and using the bound on the gradient norm:
	\begin{align}
	\norm{ \bar{\w}(t)-\w_i(t) } 
		&\le \norm{ \sum_{s=1}^{t-1} \eta_s \sum_{j=1}^{m} \left( \frac{1}{m} - (\bP^{t-s})_{ij} \right) \g_j(s) 
			+ \eta_t \left( \sum_{j=1}^{m} \frac{1}{m} \g_j(t) - \g_i(t) \right) } \\
		&\le \sum_{s=1}^{t-1} \frac{L}{\mu s} \cdot \norm{ \frac{1}{m} - (\bP^{t-s})_{i} }_1 + \frac{2L}{\mu t}.
		\label{eq:devBnd}
	\end{align}

We handle the term $\norm{ \frac{1}{m} - (\bP^{t-s})_{i} }_1$ using a bound on the mixing rate of Markov chains (c.f. (74) in \citet{DistStronglyConvex}):
	\begin{align}
	\sum_{s=1}^{t-1} \frac{L}{\mu s} \cdot \norm{ \frac{1}{m} - (\bP^{t-s})_{i} }_1 
		&\le \frac{L}{\mu} \sqrt{m} \sum_{s=1}^{t-1} \left( \sqrt{ \lambda_2(\bP)} \right)^{t - s} \frac{1}{s}.
	\end{align}

Define $a = \sqrt{\lambda_2(\bP)} \le 1$ and $b = - \log(a) > 0$.  Then we have the following identities:
\begin{align}
\sum_{\tau=1}^t \frac{a^{t-\tau+1}}{\tau} 
= \sum_{\tau=1}^t \frac{a^\tau}{t-\tau+1}
=  \sum_{\tau=1}^t \frac{\exp(-b\tau)}{t-\tau+1}.
\end{align}
Now using the fact that when $x > -1$ we have $\exp(-x)< 1/(1+x)$ and using the integral upper bound we get
\begin{align}
\sum_{\tau=1}^t \frac{a^{t-\tau+1}}{\tau}   \leq &  \sum_{\tau=1}^t \frac{1}{(1+b\tau)(t-\tau+1)} \notag \\
 \leq &  \frac{1}{(1+b)t} + \int_{1}^{t} \frac{d\tau}{(1+b\tau)(t-\tau+1)} \notag \\
 = &  \frac{1}{(1+b)t} + \left |\frac{\log(b\tau+1)-\log(t-\tau+1)}{bt+b+1} \right |_1^{t} \notag \\
 = & \frac{1}{(1+b)t} + \frac{\log(bt+1)-\log(b+1)+\log(t)}{bt+b+1} \notag \\
 \leq &  \frac{\log(et(bt+1))}{bt} \notag \\
 \leq &  \frac{\log(2bet^2)}{bt}.
 \label{eq:seriesBnd}
\end{align}
Using \eqref{eq:seriesBnd} in \eqref{eq:devBnd} we get
\begin{align}
\norm{\bar{\w}(t)-\w_i(t)} &\le \frac{L\sqrt{m}}{\mu}\frac{\log(2bet^2)}{bt} + \frac{2L}{ \mu t} \nonumber \\
&\le \frac{2L\sqrt{m}}{\mu}\frac{\log(2bet^2)}{bt}.
\label{eq:networkDevBnd}
\end{align}
Therefore we have 
	\begin{align}
	\sqrt{\E\left [\norm{ \bar{\w}(t)-\w_i(t) }^2 \right ]}
		&\le \frac{2L\sqrt{m}}{\mu}\frac{\log(2bet^2)}{bt}.
	\label{eq:avgtoind:normbnd1}
\end{align}

\end{proof}

Armed with Lemma \eqref{lemma:avgdevBndStoch} we prove the following theorem for Algorithm \eqref{alg:DiSCO} in the case of stochastic communication
\begin{theorem}
Let $\{\bP(t)\}$ be an i.i.d sequence of doubly stochastic matrices and $\rho^2=\sigma_1(\mbf{\hat{\Sigma}})$ denote the spectral norm of the covariance matrix of the data distribution. Consider Algorithm \eqref{alg:DiSCO} when the objective $J(\w)$ is strongly convex, and $\eta_t=1/(\mu t)$. Then if the number of samples on each machine $n$ satisfies 
	\begin{align}
	n > (4/3) \left(\rho^2 \log \left ( d\right ) \right )
	\end{align}
and the number of iterations $T$ satisfies
	\begin{align}
	T &> 2 e  \log(1/\sqrt{ \lambda_2(\E\left[\bP^2(t)\right])}) \\
	\frac{T}{\log(T)} &> \max\left(\frac{4}{3\rho^2}\log\left( d \right), \sqrt{\frac{8}{5}} \cdot\sqrt{ \frac{m}{\hat{\rho}^2} } \cdot \frac{1}{\log(1/\lambda_2(\E\left[\bP^2(t)\right]))}\right), 
	\end{align}
then the expected error for each node $i$ satisfies
\begin{align}
\E\left [ J(\bar{\w}_i(T)) -J(\w^{*})  \right ]  
&\le 
\left (\frac{1}{m} + \frac{150\sqrt{m\hat{\rho}^2}\cdot \log T}{1-\sqrt{\lambda_2(\E\left[\bP^2(t)\right])}} \right )\cdot \frac{L^2}{\mu} \cdot \frac{\log T}{T} 
\end{align}
\label{theorem:mainThrmStoch}
\end{theorem}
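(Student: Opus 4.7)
The plan is to mirror the proof of Theorem \ref{theorem:mainThrm} and reduce the stochastic case to the deterministic one by replacing the deterministic mixing quantity $\lambda_2(\bP)$ with its stochastic counterpart $\lambda_2(\E[\bP^2(t)])$ in the network error lemma only; the data-dependent gradient bounds from Section \ref{subsec:gradBnds} are unchanged because they never used any property of $\bP$, and the decomposition \eqref{eq:mainBnd} into T1, T2, T3 also carries over verbatim. So the only place where independence of $\bP(t)$ enters is the step where we bound $\sqrt{\E\|\bar{\w}(t)-\w_i(t)\|^2}$, and everything else is plug-and-play.

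The main technical step I would carry out first is an i.i.d.\ analogue of the Markov-mixing bound used in Lemma \ref{lemma:avgdevBnd}. Unrolling the update as in \eqref{eq:more:neterr_matrix} gives
\begin{align}
\bar{\mbf{W}}(t+1) - \mbf{W}(t+1) = -\sum_{s=1}^{t}\eta_s\Bigl(\tfrac{1}{m}\mbf{1}\mbf{1}^\top - \bP(t)\bP(t-1)\cdots\bP(s+1)\Bigr)\mbf{G}(s),
\end{align}
and writing $\tilde{\bP}(\tau) = \bP(\tau) - \frac{1}{m}\mbf{1}\mbf{1}^\top$ one has $\tilde{\bP}(\tau)\mbf{1} = 0$ and $\bP(t)\cdots\bP(s+1) - \frac{1}{m}\mbf{1}\mbf{1}^\top = \tilde{\bP}(t)\cdots\tilde{\bP}(s+1)$. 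Since the $\bP(\tau)$ are i.i.d.\ and doubly stochastic, iterated conditioning yields
\begin{align}
\E\bigl[\tilde{\bP}(s+1)^\top\cdots\tilde{\bP}(t)^\top\tilde{\bP}(t)\cdots\tilde{\bP}(s+1)\bigr] \preceq \lambda_2(\E[\bP^2(t)])^{\,t-s}\,I,
\end{align}
on the subspace orthogonal to $\mbf{1}$. This gives the row-sum bound
\begin{align}
\sqrt{\E\bigl[\|\tfrac{1}{m}\mbf{1} - (\bP(t)\cdots\bP(s+1))_i\|_1^2\bigr]} \le \sqrt{m}\,\bigl(\sqrt{\lambda_2(\E[\bP^2(t)])}\bigr)^{t-s},
\end{align}
which is the exact stochastic analogue of the deterministic Markov mixing bound used around equation \eqref{eq:mixingrate}.

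With this in hand, I would substitute into the $\ell_2$ version of \eqref{eq:devBnd} and redo the integral estimate \eqref{eq:seriesBnd} with $a = \sqrt{\lambda_2(\E[\bP^2(t)])}$ and $b = -\log a$. Because the $\mbf{G}(s)$ are bounded by $L$ (in $\ell_2$) almost surely, and because Cauchy--Schwarz lets us pull the expectation inside the sum, the same $\log(2bet^2)/(bt)$ decay rate emerges, giving
\begin{align}
\sqrt{\E\bigl[\|\bar{\w}(t)-\w_i(t)\|^2\bigr]} \le \frac{2L\sqrt{m}}{\mu}\cdot\frac{\log(2bet^2)}{b\,t}.
\end{align}

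From here the rest is mechanical: I would plug this network-error bound into T3 of \eqref{eq:mainBnd} exactly as in the original proof, reuse the gradient bounds \eqref{eq:itravgnrmBnd} and \eqref{eq:itrnrmBnd} (which depend only on the sampling model and on Lemma \ref{lem:specnormIntdim}, not on the communication matrix), and then assemble T1+T2+T3 with the telescoping argument of Section \ref{sec:combo_bound}. The iteration lower bound in the hypothesis is precisely the condition under which the $\sqrt{m}\log(T)/(bT)$ term dominates the $1/(mT)$ term, just as in Theorem \ref{theorem:mainThrm}. The main obstacle I anticipate is the iterated-conditioning step for products of non-commuting random matrices: one must be careful that the conditional expectation of $\tilde{\bP}(\tau)^\top\tilde{\bP}(\tau)$ given the other matrices is truly $\E[\tilde{\bP}^\top\tilde{\bP}]$, which is where the i.i.d.\ hypothesis on $\{\bP(t)\}$ is essential, and one must verify that the spectral bound on the orthogonal complement of $\mbf{1}$ translates correctly into the $\ell_1$ row-norm bound via the factor $\sqrt{m}$.
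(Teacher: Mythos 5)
Your proposal follows the same overall route as the paper: keep the decomposition into T1, T2, T3 and all the data-dependent gradient bounds from Theorem \ref{theorem:mainThrm} unchanged, and only replace the network-error lemma, substituting $\lambda_2(\E[\bP^2(t)])$ for $\lambda_2(\bP)$. The paper's proof of Theorem \ref{theorem:mainThrmStoch} is exactly this one-line reduction ("apply Lemma \ref{lemma:avgdevBndStoch} in \ref{eq:mainBnd} and continue as in Theorem \ref{theorem:mainThrm}"). The one substantive difference is that you actually prove the stochastic network-error lemma: the paper's written proof of Lemma \ref{lemma:avgdevBndStoch} is a verbatim copy of the deterministic Lemma \ref{lemma:avgdevBnd}, explicitly invoking "$\bP(t)=\bP$ for all $t$" and the mixing bound for powers $\bP^{t-s}$, so it never handles the i.i.d.\ product $\bP(t)\cdots\bP(s+1)$ at all. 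Your iterated-conditioning bound $\E[\tilde{\bP}(s+1)^{\trans}\cdots\tilde{\bP}(t)^{\trans}\tilde{\bP}(t)\cdots\tilde{\bP}(s+1)]\preceq \lambda_2(\E[\bP^2(t)])^{t-s} I$ on $\mbf{1}^{\perp}$ (the argument the paper only gestures at by citing the dual-averaging analysis of Duchi et al.) is precisely the missing ingredient, and it is where the appearance of $\lambda_2(\E[\bP^2(t)])$ in the theorem statement actually comes from; the Cauchy--Schwarz step moving the expectation inside the sum over $s$ is the only other place needing care, and you have it. So your write-up is correct and, on this point, more complete than the paper's.
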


\begin{proof}
Since \eqref{eq:mainBnd} still holds, we merely apply Lemma \eqref{lemma:avgdevBndStoch} in \eqref{eq:mainBnd} and continue in the same way as the proof of Theorem \eqref{theorem:mainThrm}.
\end{proof}

\section{Limiting Communication}

As an application of the stochastic communication scenario we now turn to an analysis of reducing the communication overhead of Algorithm \eqref{alg:DiSCO}.  This reduction can improve the overall running time of the algorithm because communication latency can hinder the convergence of many algorithms in practice.  A natural way of limiting communication is to communicate only a fraction $\commfrac$ of the $T$ total iterations; at other times nodes simply perform local gradient steps.

Let the sequence of random matrices $\{\bP(t)\}$ for Algorithm \eqref{alg:DiSCO} be i.i.d.~as follows
\begin{align}
\bP(t) =
\left\{
	\begin{array}{ll}
		\mathbf{I}  &  \text{ with probability } 1-\commfrac \\
		\bP &  \text{ with probability } \commfrac
	\end{array}
\right.
\label{eq:interComm}
\end{align}
where $\mathbf{I}$ is the identity matrix (implying no communication since $\bP_{ij}(t)=0$ for $i \ne j$) and, as in the previous section, $\bP$ is a fixed doubly stochastic matrix respecting the graph constraints. For this model the expected number of times communication takes place is simply $\nu T$.   Note that now we have an additional randomization due to the Bernoulli distribution over the doubly stochastic matrices.  

A straightforward application of Theorem \eqref{theorem:mainThrmStoch} reveals that the optimization error is proportional to $\frac{1}{\nu}$ and decays as $\mc{O}(\frac{1}{\nu} \cdot \frac{\log^2(T)}{T})$. However, this ignores the effect of the local communication-free iterations and suggests that only the communication rounds matter.

\subsection{Mini Batching Perspective on Intermittent Communication}
To account for local communication free iterations we modify the intermittent communication scheme to follow a deterministic schedule of communication every $1/\nu$ steps. However, instead of taking single gradient steps between communication rounds, each node gathers the (sub)gradients and then takes an aggregate gradient step. That is, after the $t$-th round of communication, the node samples a batch $\mc{I}_t$ of indices sampled with replacement from its local data set with $|\mc{I}_t| = 1/\nu$. We can think of this as the base algorithm with a better gradient estimate at each step. The update rule is now
\begin{align}
\w_i(t+1) = \sum_{j \in \mc{N}_i} \w_j(t) P_{ij}(t) - \step_t \nu \sum_{ i \in \mc{I}_i } \g_i(t).
\label{eq:mbatchComm}
\end{align} 
We define $\g^{1/\nu}_i(t) = \sum_{ i \in \mc{I}_i } \g_i(t)$. Now the iteration count is over the communication steps and $\g^{1/\nu}_i(t)$ is the aggregated mini-batch (sub)gradient of size $1/\nu$. Note that this is analogous to the random scheme above but the analysis is more tractable.

\begin{theorem}
Fix a Markov matrix $\bP$ and let $\specnorm^2=\sigma_1(\mbf{\hat{\Sigma}})$ denote the spectral norm of the covariance matrix of the data distribution. Consider Algorithm \eqref{alg:DiSCO} when the objective $J(\w)$ is strongly convex, $\bP(t) = \bP$ for all $t$, and $\step_t=1/(\reg t)$ for scheme \eqref{eq:mbatchComm}.  Let $\lambda_2(\bP)$ denote the second largest eigenvalue of $\bP$.  Then if the number of samples on each machine $n$ satisfies 
	\begin{align}
	n > \frac{4}{3 \rho^2} \log \left ( d\right ) 
	\end{align}
and 
	\begin{align}
	&T > \frac{2e}{\nu}  \log(1/\sqrt{ \lambda_2(\bP)}) \notag \\
	&\frac{T}{\log(\nu T)} > \max\left(\frac{4}{3\nu \specnorm^2}\log(d), \frac{\left( \frac{8}{5} \right)^{\frac{1}{4}} \sqrt{ m/\specnorm^2 }}{\log(1/\lambda_2)}\right) \notag \\
	&\frac{1}{\nu} > \frac{4}{3\rho^2} \cdot \log(d)
	 \label{eq:thm:Tbound}
	\end{align}
	and 
then the expected error for each node $i$ satisfies 
	\begin{align}
	\E\left [ J(\hat{\w}_i(T)) -J(\w^{*})  \right ]  \le \left( \frac{1}{m} 
		+ 200\sqrt{5} 
			\cdot \frac{\sqrt{m\specnorm^4}\cdot \log (\nu T)}{1-\sqrt{\lambda_2}} 
		\right) \cdot \frac{L^2}{\reg} \cdot \frac{\log (\nu T)}{T}.
\end{align}
where $\nu$ is the frequency of communication and where $\lambda_2=\lambda_2(\bP)$.
\label{theorem:mainThrm:mbatch}
\end{theorem}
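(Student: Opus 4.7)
My plan is to imitate the proof of Theorem \ref{theorem:mainThrm} step by step, treating the mini-batch average $\hat{\g}_i(t) := \nu \sum_{s \in \mc{I}_i} \g_s(t)$ as a variance-reduced stochastic gradient. Because the update \eqref{eq:mbatchComm} has the same form as Algorithm \ref{alg:DiSCO} after the substitution $\g_i \mapsto \hat{\g}_i$, the decomposition of the per-iteration suboptimality gap into the three terms $\mathrm{T1}$, $\mathrm{T2}$, $\mathrm{T3}$ of \eqref{eq:T1}--\eqref{eq:T3} carries over verbatim, and so does $\E[J(\bar{\w}(t)) - J(\w^{*})] \le \mathrm{T1} + \mathrm{T2} + \mathrm{T3}$.

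The first new ingredient is an expected-norm bound on $\hat{\g}_i(t)$. Expanding $\hat{\g}_i(t) = \nu \sum_{s \in \mc{I}_i} \beta_s \x_s + \mu \w_i(t)$ with $|\beta_s| \le L$ and repeating the Gram-matrix manipulation of \eqref{eq:basicGradBnd} gives $\norm{\hat{\g}_i(t)}^2 \le 2\nu L^2 \spec(\Q_{\mc{I}_i}) + 2\mu^2 \norm{\w_i(t)}^2$, where $\Q_{\mc{I}_i}$ is the $(1/\nu) \times (1/\nu)$ Gram matrix of the mini-batch. Under the assumption $1/\nu > (4/(3\rho^2))\log d$, Lemma \ref{lem:specnormIntdim} with $K = 1/\nu$ yields $\E[\spec(\Q_{\mc{I}_i})] \le 5\rho^2/\nu$, so each mini-batch gradient has expected $\ell_2$ norm of order $L\rho$ rather than the pointwise bound $L$. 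This is precisely where the hypothesis $1/\nu > (4/(3\rho^2))\log d$ enters.

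The second new ingredient, and the main obstacle, is an improved analog of Lemma \ref{lemma:avgdevBnd}. In that proof the bound $\norm{\g_j(s)} \le L$ was applied pointwise to every term of the telescoped expansion of $\bar{\w}(t)-\w_i(t)$ in \eqref{eq:more:neterr_matrix}. For the mini-batch scheme I would instead apply Minkowski's inequality in $L^2$ to that vector-valued sum, substitute $\sqrt{\E\,\norm{\hat{\g}_j(s)}^2} = O(L\rho)$, and rerun the Markov-mixing calculation \eqref{eq:mixingrate}--\eqref{eq:seriesBnd} unchanged (the bound there was an $\ell_1$-norm times a scalar, a structure Minkowski preserves). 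The output is $\sqrt{\E[\norm{\bar{\w}(t)-\w_i(t)}^2]} \le C L\rho \sqrt{m}/(b\mu) \cdot \log(2bet^2)/t$, tighter by exactly a factor of $\rho$. Since the local-objective gradient bounds \eqref{eq:itravgnrmBnd}--\eqref{eq:itrnrmBnd} depend only on $J_i$ and are therefore unchanged, $\mathrm{T3}$ becomes $O(L^2 \sqrt{m\rho^4}/(b\mu)) \cdot \log(t)/t$, which is precisely the $\rho^4$ dependence asserted by the theorem.

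The remaining work follows \S\ref{sec:combo_bound}. For $\mathrm{T2}$ I would split $(1/m)\sum_i \hat{\g}_i(t)$ into its conditional mean $(1/m)\sum_i \nabla J_i(\w_i(t))$, whose squared norm is $O(L^2\rho^2)$ as before, and a zero-mean fluctuation whose second moment is $O(\nu L^2/m)$ by independence of the mini-batches across nodes; the $\nu$ here is the familiar mini-batch variance reduction. For $\mathrm{T3}$ the standard series estimate $\sum_t \log(t)/t = O(\log^2 T)$ combined with the improved network-error factor yields the second piece of the stated bound. Telescoping $\mathrm{T1}$, summing over iterations, dividing by the iteration count, and converting $\hat{\w}(T)$ to the per-node iterate $\hat{\w}_i(T)$ via \eqref{eq:combine:bound1} closes the argument. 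The three conditions on $T$ and $1/\nu$ in \eqref{eq:thm:Tbound} are exactly what is needed to ensure that Lemma \ref{lem:specnormIntdim} is applicable at sample size $n$ and at batch size $1/\nu$, that the $\log(2bet^2)$ terms can be replaced by $\log(\nu T)$, and that the improved network-error piece dominates the variance contribution of $\mathrm{T2}$.
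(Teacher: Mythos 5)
Your proposal is correct and follows essentially the same route as the paper: the same $\mathrm{T1}+\mathrm{T2}+\mathrm{T3}$ decomposition, the same key new ingredient (bounding the mini-batch gradient norm by $L^2\nu\,\spec(\Q_{1/\nu})$ and invoking Lemma \ref{lem:specnormIntdim} at batch size $K=1/\nu$, which is exactly where the hypothesis $1/\nu > \tfrac{4}{3\rho^2}\log d$ enters), the same resulting improvement of the network-error lemma by a factor of $\specnorm$, unchanged local gradient bounds, and the same final assembly with $T$ replaced by $\nu T$. Your use of Minkowski's inequality to push the expectation inside the telescoped sum is a minor (and arguably cleaner) variant of the paper's handling of the random factor $\sqrt{\nu\norm{\Q_{1/\nu}}}$, but it is not a genuinely different argument.
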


\noindent \textit{Remark:} Theorem \ref{theorem:mainThrm:mbatch} suggests that if the inverse frequency of communication is large enough than we can obtain a sharper bound on the error by a factor of $\rho$. This is significantly better than a $\mc{O}(\sqrt{m\specnorm^2}\cdot\frac{\log \nu T}{\nu T})$ baseline guarantee from a direct application of Theorem \ref{theorem:mainThrm} when the number of iterations is $\nu T$.

Additionally the result suggests that if we communicate on a mini-batch(where batch size $b=1/\nu$) that is large enough we can improve Theorem \ref{theorem:mainThrm}, specifically now we get a $1/m$ improvement when $m \leq 1/\rho^{4/3}$.

\subsection{Proof of Convergence}

\begin{proof}

We will first establish the network lemma for scheme \eqref{eq:mbatchComm}.

\begin{lemma}\label{lemma:avgdevBndmbatch}
Fix a Markov matrix $\bP$ and consider Algorithm \eqref{alg:DiSCO} when the objective $J(\w)$ is strongly convex and the frequency of communication satisfies
    \begin{align}
	1/\nu > \frac{4}{3\specnorm^2} \log(d) 
	\end{align}
we have the following inequality for the expected squared error between the iterate $\w_i(t)$ at node $i$ at time $t$ and the average $\bar{\w}(t)$ defined in Algorithm \eqref{alg:DiSCO} for scheme \eqref{eq:mbatchComm}
	\begin{align}
	\sqrt{\E\left[\norm{ \bar{\w}(t)-\w_i(t) }^2 \right]} 
	\leq \frac{4L\sqrt{5m \specnorm^2}}{\reg}\cdot \frac{\log(2bet^2)}{bt}
	\end{align}
where $b = (1/2)\log(1/\lambda_2(\bP)) $.
\end{lemma}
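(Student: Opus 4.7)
The plan is to mirror the proof of Lemma \ref{lemma:avgdevBnd}, replacing each single-sample stochastic gradient $\g_j(s)$ by the rescaled aggregated mini-batch gradient $\nu\g^{1/\nu}_j(s)$, and then to gain a factor of $\specnorm$ by applying Lemma \ref{lem:specnormIntdim} to the mini-batch Gram submatrix. I would first rerun the matrix expansion from the proof of Lemma \ref{lemma:avgdevBnd} with $\mbf{G}(s)$ replaced by $\nu\mbf{G}^{1/\nu}(s)$, so that the analogue of \eqref{eq:more:neterr_matrix} gives
\[
\bar{\w}(t+1)-\w_i(t+1) = -\sum_{s=1}^{t-1}\step_s\sum_{j=1}^{m}\Bigl(\tfrac{1}{m}-(\bP^{t-s})_{ij}\Bigr)\nu\g^{1/\nu}_j(s) - \step_t\Bigl(\sum_{j}\tfrac{\nu\g^{1/\nu}_j(t)}{m}-\nu\g^{1/\nu}_i(t)\Bigr).
\]
Applying Minkowski's inequality (the triangle inequality for the $L^2$ norm of random vectors) first over $s$ and then over $j$ yields
\[
\sqrt{\E\norm{\bar{\w}(t)-\w_i(t)}^2}\le\sum_{s=1}^{t-1}\step_s\sum_{j}\bigl|\tfrac{1}{m}-(\bP^{t-s})_{ij}\bigr|\sqrt{\E\norm{\nu\g^{1/\nu}_j(s)}^2}+2\step_t\max_{j}\sqrt{\E\norm{\nu\g^{1/\nu}_j(t)}^2}.
\]

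The central new ingredient is a uniform bound of the form $\E[\norm{\nu\g^{1/\nu}_j(s)}^2]\le c_0 L^2\specnorm^2$ with $c_0=O(1)$. I would split $\nu\g^{1/\nu}_j(s)=\nu\sum_{k\in\mc{I}_j}\partial\ell(\w_j(s)^{\trans}\x_k)\x_k+\reg\w_j(s)$, use $\norm{a+b}^2\le 2\norm{a}^2+2\norm{b}^2$, and treat the two pieces separately. The data part fits the subgradient-as-quadratic-form calculation of \eqref{eq:basicGradBnd}: $\norm{\nu\sum_{k}\partial\ell\,\x_k}^2\le\nu^2(L^2/\nu)\spec(\Q_{\mc{I}_j})=\nu L^2\spec(\Q_{\mc{I}_j})$. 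Since the hypothesis $1/\nu>\tfrac{4}{3\specnorm^2}\log d$ aligns exactly with the requirement of Lemma \ref{lem:specnormIntdim} with $K=1/\nu$, taking expectation gives $\E[\nu\spec(\Q_{\mc{I}_j})]\le 5\specnorm^2$, so the data part contributes at most $10L^2\specnorm^2$. The regularizer part is controlled by re-using the iterate-norm estimate $\E[\norm{\w_j(s)}^2]\le 10L^2\specnorm^2/\reg^2$ from \eqref{eq:localiter:norm}, giving an overall bound proportional to $L^2\specnorm^2$ with an explicit absolute constant.

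Finally, plugging this uniform bound back in, the mixing-rate inequality $\norm{\tfrac{1}{m}\mbf{1}-(\bP^{t-s})_{i}}_1\le\sqrt{m}(\sqrt{\lambda_2(\bP)})^{t-s}$ together with the series estimate \eqref{eq:seriesBnd} collapses the outer sum to an expression of the form $\frac{c L\sqrt{m\specnorm^2}}{\reg}\cdot\frac{\log(2bet^2)}{bt}$ with $c=O(1)$; the trailing $2\step_t\max_{j}\sqrt{\E\norm{\nu\g^{1/\nu}_j(t)}^2}$ term is absorbed into this expression since $T>2be$, and tracking the constants carefully should reproduce the claimed $4\sqrt{5m\specnorm^2}$ factor. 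The hard part will be rigorously transferring the iterate-norm bound \eqref{eq:localiter:norm} to the mini-batch update \eqref{eq:mbatchComm}: one has to unroll the mini-batch recursion in the Pegasos style used in Section \ref{subsec:gradBnds}, express $\w_i(t)$ as a weighted sum of aggregated subgradients, and verify that when a batch of $1/\nu$ contributions is squared out, the resulting block-Gram form is still controlled by Lemma \ref{lem:specnormIntdim} and does not accumulate a spurious $1/\nu$ factor. Once this inheritance is in place, everything else is routine bookkeeping modeled on the proof of Lemma \ref{lemma:avgdevBnd}.
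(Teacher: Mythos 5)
Your proposal reproduces the paper's central new ingredient exactly: the quadratic-form bound $\norm{\nu\g^{1/\nu}_j(s)}^2 \le \nu L^2\spec(\Q_{1/\nu})$ followed by an application of Lemma \ref{lem:specnormIntdim} with $K=1/\nu$ (which is precisely what the hypothesis $1/\nu > \tfrac{4}{3\specnorm^2}\log d$ is for), giving $\E[\nu\spec(\Q_{1/\nu})]\le 5\specnorm^2$ and hence the extra $\sqrt{5\specnorm^2}$ factor over Lemma \ref{lemma:avgdevBnd}. The outer structure (matrix expansion, mixing-rate bound, series estimate \eqref{eq:seriesBnd}) is also the same. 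Your Minkowski-style term-by-term treatment of the expectation is, if anything, cleaner than the paper's, which pulls a single random factor $\sqrt{\nu\norm{\Q_{1/\nu}}}$ out of the whole sum before taking expectations.

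The one place you genuinely diverge is the treatment of the regularization term $\reg\w_i(t)$ inside the stochastic gradient, and this is where your route has a problem that the paper's does not. You propose to bound the regularizer contribution by $\reg^2\E[\norm{\w_j(s)}^2]\le 10L^2\specnorm^2$ via \eqref{eq:localiter:norm}; but \eqref{eq:localiter:norm} is itself derived from \eqref{eq:netBnd}, which uses the network-deviation bound \eqref{eq:networkDevBnd} --- i.e.\ the very lemma you are trying to prove (in its mini-batch form). As written this is circular, and you would need either a joint induction or an independent a priori iterate bound to break the cycle; you flag this as ``the hard part'' but do not resolve it. The paper sidesteps the issue entirely: since $\step_t\reg = 1/t$, it absorbs the $-\step_t\reg\w_i(t)$ term into a perturbed transition matrix $\tilde{\bP}(t)$, so the gradient appearing in the recursion is the data part $\g^{1/\nu}_i$ alone, and the perturbation is paid for by an extra $\norm{(\bP^{t-s})_i - (\tilde{\bP}^{t-s})_i}_1$ term that is folded into the mixing bound (this is the source of the leading factor $2$, hence the $4$ in the final constant). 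If you adopt that absorption step, the rest of your argument goes through and no iterate-norm bound is needed inside this lemma.
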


\begin{proof}

It is easy to see that we can write the update equation in Algorithm \eqref{alg:DiSCO},
\begin{align}
\w_i(t+1) &= \sum_{j=1}^m\tilde{\bP}_{ij}(t)\w_j(t)- \step_t \g^{1/\nu}_i(t)
\end{align}
where
\begin{align}
\tilde{\bP}_{ij}(t) =
\left\{
	\begin{array}{ll}
		\bP_{ij}(t)  &  \text{ when } i \ne j \\
		\bP_{ii}(t)-\frac{1}{mt} &  \text{ when } i=j
	\end{array}
\right.
\end{align}
and $\g_i(t) = \g^{1/\nu}_i(t) + \reg\w_i(t)$.

\removed{
Next we have from \eqref{eq:devBnd} we have 
	\begin{align}
   &	\norm{ \bar{\w}(t)-\w_i(t) } \le \notag \\
	& \norm{ \sum_{s=1}^{t-1} \step_s \sum_{j=1}^m \left( \frac{1}{m} - (\tilde{\bP}^{t-s})_{ij} \right) \g^{1/\nu}_j(s) 
	+ \step_t \left( \sum_{j=1}^{m} \frac{1}{m} \g^{1/\nu}_j(t) - \g^{1/\nu}_i(t) \right) } 
	\end{align}}

We need first a bound on $\norm{\g^{1/\nu}_j(s)}$ using the definition of the minibatch (sub)gradient:
 \begin{align}
\norm{\g^{1/\nu}_i(s)}^2 &= \norm{ \frac{\sum_{i_{k_s} \in H^i_s}\partial \ell(\w_i(s)^{\trans}\x_{k_{i_s}})\x_{k_{i_s}} }{1/\nu}}^2  \notag \\
&\leq L^2\nu \norm{\Q_{1/\nu}} 
\end{align} 
From \eqref{eq:devBnd} and the minibatch (sub)gradient bound
	\begin{align*}
   	&
   	\norm{ \bar{\w}(t)-\w_i(t) } \notag \\
	&\le 
	\norm{ \sum_{s=1}^{t-1} \step_s \sum_{j=1}^m \left( \frac{1}{m} - (\tilde{\bP}^{t-s})_{ij} \right) \g^{1/\nu}_j(s)} 
	\notag \\ 
  		&\qquad
		+ \step_t \norm{\left( \sum_{j=1}^{m} \frac{\mbf{1}}{m} \g^{1/\nu}_j(t) - \g^{1/\nu}_i(t) \right) } \\
	&\le L\sqrt{\nu\norm{\Q_{1/\nu}}}\sum_{s=1}^{t-1} \frac{\norm{ \frac{\mbf{1}}{m} - (\tilde{\bP}^{t-s})_{i} }_1}{\reg s} + \frac{2L\sqrt{\nu\norm{\Q_{1/\nu}}}}{\reg t} \\
	&\le L\sqrt{\nu\norm{\Q_{1/\nu}}}
		\notag \\
		&\hspace{0.6in}
		\sum_{s=1}^{t-1} \frac{\norm{ \frac{\mbf{1}}{m} - (\bP^{t-s})_{i} }_1 + \norm{ (\bP^{t-s})_{i} - (\tilde{\bP}^{t-s})_{i} }_1}{\reg s} \notag \\
		&\qquad + \frac{2L\sqrt{\nu\norm{\Q_{1/\nu}}}}{\reg t} \\
	&\le 
	2L\sqrt{\nu\norm{\Q_{1/\nu}}}\sum_{s=1}^{t-1} \frac{\norm{ \frac{\mbf{1}}{m} - (\bP^{t-s})_{i} }_1}{\reg s} + \frac{2L\sqrt{\nu\norm{\Q_{1/\nu}}}}{\reg t} 
	\end{align*}
	
Continuing as in the proof of Lemma \ref{lemma:avgdevBnd}, taking expectations and using Lemma \ref{lem:specnormIntdim}, for 
$1/\nu > \frac{4}{3\specnorm^2} \log(d)$ we have
	\begin{align}
	\sqrt{\E\left [\norm{ \bar{\w}(t)-\w_i(t) }^2 \right ]}
		&\le \frac{4L\sqrt{m\nu\E\left[\norm{\Q^{1/\nu}}\right]}}{\reg}\frac{\log(2bet^2)}{bt} \notag \\
		&\le \frac{4L\sqrt{5m\specnorm^2}}{\reg}\frac{\log(2bet^2)}{bt}
		\label{eq:avgtoind:normbnd:minibatch}
\end{align}

\end{proof}

For the scheme \eqref{eq:mbatchComm} all the steps until bound \eqref{eq:mainBnd} from proof of Theorem \eqref{theorem:mainThrmStoch} remain the same. The difference in the rest of the proof arises primarily from the mini-batch gradient norm factor in Lemma \eqref{lemma:avgdevBndmbatch}. We have the same decomposition as \eqref{eq:mainBnd} with $\text{T1}$, $\text{T2}$, and $\text{T3}$ as in \eqref{eq:T1}, \eqref{eq:T2}, and \eqref{eq:T3}.
 The gradient norm bounds also don't change since the minibatch gradient is also an unbiased gradient of the true gradient $\nabla J(\cdot)$. Thus substituting Lemma \eqref{lemma:avgdevBndmbatch} in the above and following the same steps as in proof of Theorem \eqref{theorem:mainThrmStoch}, replacing $T$ by $\nu T$ where $T$ is now the total iterations including the communication as well as the minibatch gathering rounds, we get Theorem \eqref{theorem:mainThrm:mbatch}.
\end{proof}
\removed{
\subsection{Diminishing Communicaton}

We saw in the previous section that communicating intermittently with a fixed frequency incurs an error inversely proportional to this frequency. However once the nodes are already close to the optimal solution communicating their respective iterate seems wasteful. Intuitively it is in the beginning the nodes should communicate more frequently. To formalize the intuition we propose the following communication model
\begin{align}
\bP(t) =
\left\{
	\begin{array}{ll}
		\bP  & \mbox{w.p. } Ct^{-p} \\
		\mbf{I} & \mbox{w.p. } 1-Ct^{-p}
	\end{array}
\right.
\label{eq:powerLaw}
\end{align}
where $C,p > 0$. Note that Lemma \eqref{lemma:avgdevBndStoch} does not work here since the identically distributed assumption is violated and we need a different approach to bound the network error.

\begin{lemma}\label{lemma:avgdevBndStochPowerLaw}
Let $\{\bP(t)\}$ be a sequence of doubly stochastic Markov matrices set as in \eqref{eq:powerLaw} where $ p < 1$ and consider Algorithm \ref{alg:DiSCO} when the objective $J(\w)$ is strongly convex and the number of iterations $T$ satisfies
	\begin{align}
	T &> e.
	\end{align}
We have the following inequality for the expected squared error between the iterate $\w_i(t)$ at node $i$ at time $t$ and the average $\bar{\w}(t)$ defined in Algorithm \eqref{alg:DiSCO}:
\begin{align}
\sqrt{\E\left [ \norm{\bar{\w}(t+1)-\w_i(t+1)}^2  \right ]} & \leq \frac{4L}{\mu} \cdot \frac{1-p}{C} \cdot \frac{\sqrt{m}}{\left(1-\sqrt{\lambda_2(\bP)}\right)} \cdot \frac{\log T}{t^{1-p}} 
\end{align} 
\end{lemma}

\begin{proof}
Let us define the following random variables first
\begin{align}
I_t =
\left\{
	\begin{array}{ll}
		 1 & \mbox{w.p. } q(t) \\
		0 & \mbox{w.p. } 1-q(t)
	\end{array}
\right.
\label{eq:indRand}
\end{align}
where $q(t) = Ct^{-p}$.

The sample space at time $t$ is then $\mc{Z}_t = \{0,1\}$ and each iteration corresponds to the Bernoulli measure $\mc{Q}_t=\{q(t),1-q(t)\}$ and sigma algebra $\mc{A}_t$. We define the product measure $\mc{Q}$ as the measure on $\left(\bigotimes_{t=1}^T \mc{Z}_t, \bigotimes_{t=1}^T \mc{A}_t\right)$ \\

From the proof of Lemma \eqref{lemma:avgdevBnd} we get  
	\begin{align}
	\sum_{s=1}^{t-1} \frac{L}{\mu s} \cdot \E\left[\norm{ \frac{1}{m} - \Phi(s:t))_{i} }_1 \right]  
		&\le \frac{L}{\mu} \sqrt{m} \sum_{s=1}^{t-1} \E\left[ \left( \sqrt{ \lambda_2(\bP)} \right)^{N(s:t)} \right]\frac{1}{s}.
		\label{eq:bndTimeDep}
	\end{align}
where $N(s:t) = \sum_{\tau=s}^t I_{\tau}$ and the expectation is w.r.t to the probability measure $\mc{Q}$.\\

Let $a=\sqrt{ \lambda_2(\bP)}$ and then we have due to independence and expectation w.r.t the Bernoulli measure
\begin{align}
\E\left[ a^{N(s:t)} \right ]  \quad & = \quad \E\left[ a ^{\sum_{\tau=s}^t I_{\tau}} \right ] \notag \\
                              \quad & = \quad  \prod_{\tau=s}^t \E_{\mc{Q_\tau}}\left[ a^{I_{\tau}} \right ] \notag \\
                              \quad & = \quad  \prod_{\tau=s}^t (1-C\tau^{p} + C\tau^{-p} a) \notag \\
                               \quad & = \quad  \prod_{\tau=s}^t \left(1- C\tau^{-p} \left( 1-a \right ) \right ) \notag \\ 
\end{align}

Then we have using  $\log(x)< x-1$ (for $x>0$)
\begin{align}
\log \left (\E\left[ a^{N(s:t)} \right ] \right ) \quad & = \quad  \sum_{\tau=s}^t \log \left ( 1 - \frac{C}{\tau^{p}} \left(1 - a \right) \right) \notag \\
\quad & \leq \quad   -\sum_{\tau=s}^t \frac{C}{\tau^{p}} \left(1 - a \right) \notag \\
\quad & = \quad -(1-a) \sum_{\tau=s}^t \frac{C}{\tau^{p}} \notag \\
\label{eq:tedIum}
\end{align}

Finally using $0\leq p < 1$, bound \eqref{eq:tedIum} in \eqref{eq:bndTimeDep}, $\exp(-x)<1/(1+x)$ and the integral lower bound for a monotonically decreasing sequence
\begin{align}
\sum_{s=1}^{t-1} \frac{L}{\mu s} \cdot \E\left[\norm{ \frac{1}{m} - \Phi(s:t))_{i} }_1 \right]  \quad \leq & \quad 
 \frac{L}{\mu } \sum_{s=1}^{t-1} \frac{\exp(-C(1-a) \sum_{\tau=s}^t 1/\tau^{p})}{s} \notag \\
\quad \leq & \quad 
 \frac{L}{\mu } \sum_{s=1}^{t-1} \frac{1}{s\left(1+ C(1-a)\int_{s}^{t+1} (d\tau/\tau^p)\right)} \notag \\ 
\quad \leq & \quad \frac{L}{\mu } \sum_{s=1}^{t-1} \frac{1}{s\left(1+ C(1-a)/(1-p)\left ( (t+1)^{1-p} - s^{1-p} \right ) \right)} \notag \\ 
\quad \leq & \quad \frac{L}{\mu} \cdot \frac{1-p}{C(1-a)} \cdot \frac{1}{(t+1)^{1-p}} \cdot \sum_{s=1}^{t-1}  \frac{1}{s} \notag \\
\quad \leq & \quad \frac{L}{\mu} \cdot \frac{1-p}{C(1-a)} \cdot \frac{\log(et)}{(t+1)^{1-p}} 
\end{align}

Combining everything in bound \eqref{eq:devBnd2} assuming $T > e$  gives us
\begin{align}
\sqrt{\E\left [ \norm{\bar{\w}(t+1)-\w_i(t+1)}^2  \right ]} & \leq \frac{4L}{\mu} \cdot \frac{1-p}{C} \cdot \frac{\sqrt{m}}{\left(1-\sqrt{\lambda_2(\bP)}\right)} \cdot \frac{\log T}{t^{1-p}} 
\end{align} 

\end{proof}

Lemma \eqref{lemma:avgdevBndStochPowerLaw} directly leads to the following error guarantee with communication scheme \eqref{eq:powerLaw}

\begin{theorem}
Let Let $\bP(t)$ be set by \eqref{eq:powerLaw} with $p<1$ and $\ds=\sigma_1(\mbf{\hat{\Sigma}})$ denote the spectral norm of the covariance matrix of the data distribution. Consider Algorithm \ref{alg:DiSCO} when the objective $J(\w)$ is strongly convex and $\eta_t=1/(\mu t)$.  Let $\lambda_2(\bP)$ denote the second largest eigenvalue of $\bP$.  Then if the number of samples on each machine $n$ satisfies 
	\begin{align}
	n > (4/3) \left(\rho^2 \log \left ( d\right ) \right )
	\end{align}
and the number of iterations $T$ satisfies
	\begin{align}
     T &> e\\
	\frac{T}{\log(\log(T))} &> \max\left(\frac{4}{3\rho^2}\log(d), \sqrt{\frac{8}{5}} \cdot\sqrt{ \frac{m}{\rho^2} } \cdot \frac{1-p}{C\left(1-\lambda^2_2(\bP)\right)}\right), \label{eq:thm:Tbound}
	\end{align}
then the expected error for each node $i$ satisfies
\begin{align}
\E\left [ J(\bar{\w}_i(T)) -J(\w^{*})  \right ]  
&\le 
\left (\frac{1}{m} + \frac{150\left(\sqrt{m(1-p)^2\rho^2}/C\right )\cdot \log(T)}{1-\sqrt{\lambda_2(\bP)}} \right )\cdot \frac{L^2}{\mu} \cdot \frac{\log T}{T^{1-p}} \notag
\end{align}
\label{theorem:mainThrmStochPower}
\end{theorem}

\begin{proof}
Since \eqref{eq:mainBnd} still holds, we merely apply Lemma \eqref{lemma:avgdevBndless} in \eqref{eq:mainBnd}. After adapting the condition on the number of iterations in \eqref{eq:netBnd} and continuing in the same way as the proof of Theorem \eqref{theorem:mainThrm} we get Theorem \eqref{theorem:mainThrmStochPower}.
\end{proof}

\noindent \textit{Remark:}  Theorem \eqref{theorem:mainThrmStochPower} suggests that we can control convergence via the communication frequency parameter $p$. For networks and/or problems for which communicating is expensive we can use a larger $p$ to limit the number of communication iterations  and still get (slower $\mathcal{O}(\log (T)/T^{1-p})$) convergence.  For $p=0$ and $C=1$ we recover the full communication regime result of Theorem \ref{theorem:mainThrm} and with $p=0$ and $C=\nu$ we recover the intermittent communication result of Theorem \ref{theorem:mainThrmLess}. Thereby indicating that the communication strategy \ref{eq:powerLaw} is an effective strategy to adjust Algorithm \ref{alg:DiSCO} based on the application and network hardware requirements.
}
\begin{figure*}[t]
\centering
\scalebox{0.95}{
\includegraphics[width=3.8in]{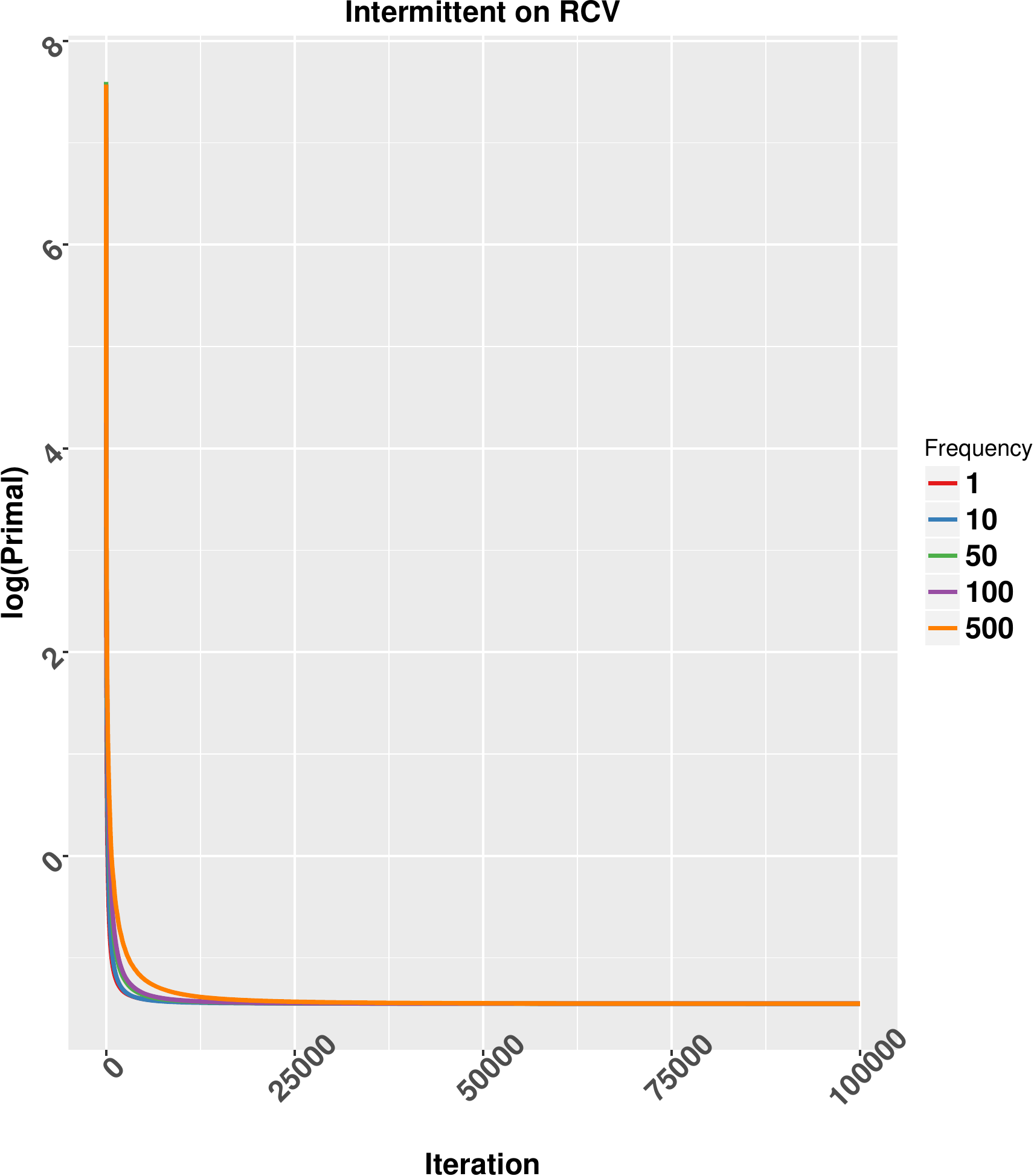}
}
\label{fig:dist:rcv}
\scalebox{0.95}{
\includegraphics[width=3.8in]{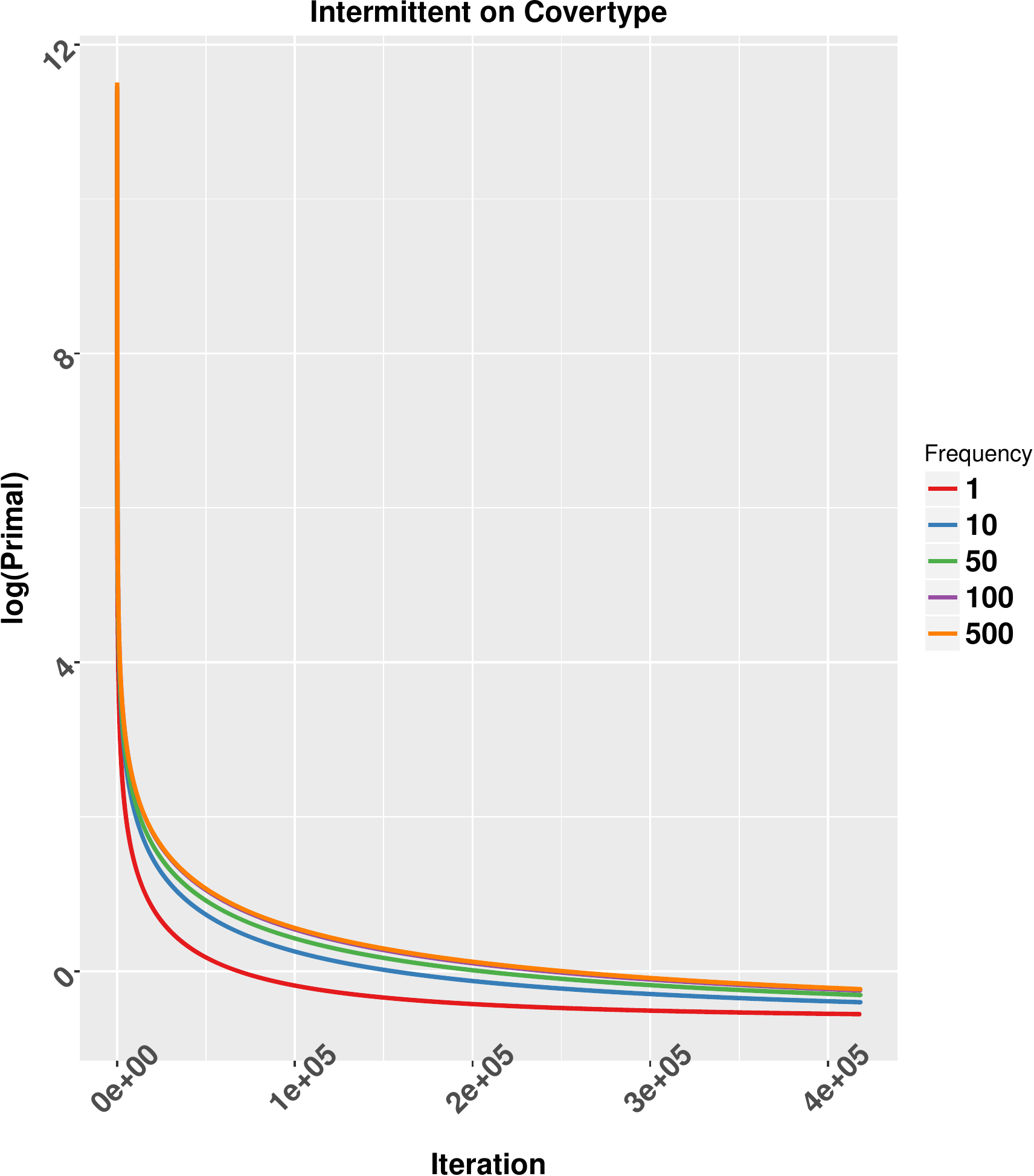}
}
\label{fig:dist:rcv}

\caption{ Performance of Algorithm \eqref{alg:DiSCO} with intermittent communication scheme on datasets with very different $\specnorm^2$. The algorithm works better for smaller $\specnorm^2$ and there is less decay in performance for \rcv~ as we decrease the number of communication rounds as opposed to \ctype (\ctype~with $\specnorm^2=0.21$ and \rcv~with $\specnorm^2=0.013$).}
\label{fig:DiSCOvsDODG}
\end{figure*} 
\begin{figure}[t]
\centering
\includegraphics[width=4.5in]{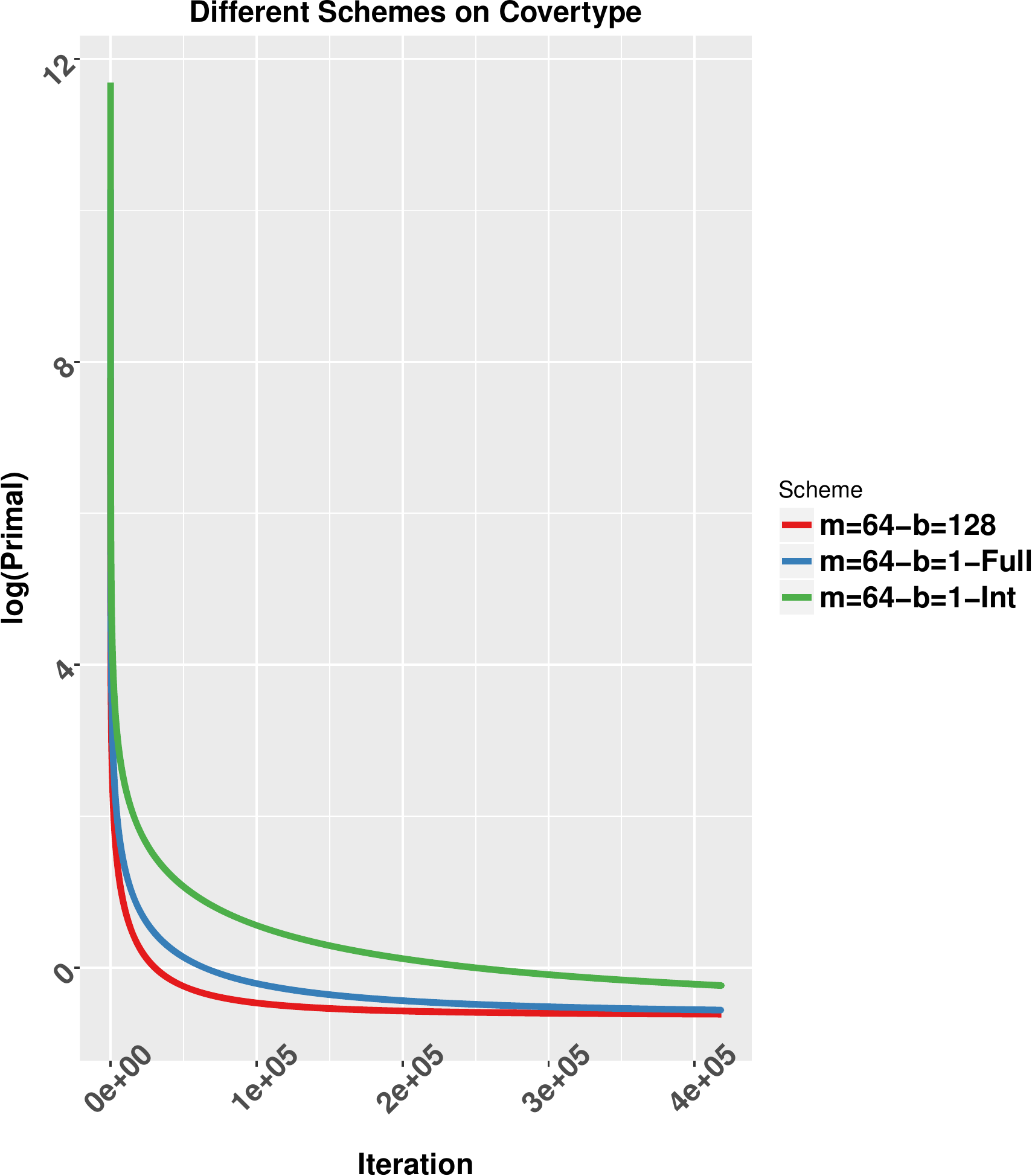}
\caption{Comparison of three different schemes a) Algorithm \eqref{alg:DiSCO} with Mini-Batching b) Standard c) Intermittent with $b=(1/\nu)=128$. As predicted the mini-batch scheme performs much better than the others.}
\label{fig:mbatch}
\end{figure} 

\section{Empirical Results}


\subsection{Intermittent Communication.} Theorem \eqref{theorem:mainThrmStoch} suggests that a data distribution with relatively smaller $\specnorm^2$ should perform favorably compared to a dataset with a higher $\specnorm^2$. 
Indeed, Figure \eqref{fig:DiSCOvsDODG} reveals that the \rcv~dataset with smaller $\specnorm^2$ doesn't suffer the effects of intermittent communication as the frequency of communication is varied. For \ctype~the performance degrades more with decreasing $\nu$.

\subsection{Comparison of Different Schemes}
We compare the three different schemes proposed in this paper. On a network of $m=64$ machines we plot the performance of the mini batch extension of Algorithm \eqref{alg:DiSCO} with batch size $128$ against the intermittent scheme that communicates after every $128$ iterations and also the standard version of the algorithm. In Figure \eqref{fig:mbatch} we see that as predicted in Theorem \eqref{theorem:mainThrm:mbatch} the mini batch scheme proposed in \eqref{eq:mbatchComm} does better than the vanilla and the intermittent scheme.

\section{Summary}

We analyzed different communication regimes and showed that distributions with smaller spectral norm are more tolerant of schemes involving less communication. Additionally we were able to improve the data-dependence factor by $\specnorm$ in Theorem \eqref{theorem:mainThrm:mbatch} over Theorem \eqref{theorem:mainThrm} in Chapter \eqref{Chapter3} after adding a mini-batch update in Algorithm \eqref{alg:DiSCO}. 

In the next chapter, assuming a fixed complete graph topology we will show that one can obtain speedups that are also data-dependence, thereby tying the analysis with previous chapters.



\chapter{Mini Batch Stochastic Gradient Descent - Complete Graph Topology} 
\label{Chapter5}
\lhead{Chapter 5. \emph{Mini Batch SGD}} 

In this chapter we explore the setting when the network is a complete graph of compute nodes. It is easy to see that this is equivalent to the mini-batch setting and therefore one would expect a speedup as we add more nodes to the network (See \cite{CotterSSS:11nips} and \cite{DekelGSX:12mini}). It would indeed be ideal to have a more general version of Theorem \eqref{theorem:mainThrm} covering the case of the complete graph, however the analysis relies on bounds from mixing rates of Markov chains which prove to be very loose in this extreme setting. As a consequence we do not recover any speedups as one would hope. This is true of all analysis (of consensus algorithms) that depend on mixing rates bounds.

This weakness of the analysis in the Chapter \eqref{Chapter3} thus provides a compelling reason to explore data-dependence in this setting using a simpler but different approach. 

In this Chapter we explore the data-dependence of Mini batching based SGD for problem \eqref{eq:optForm} and empirically in the context of support vector machines (SVM). As discussed in Chapter \eqref{Chapter1} this is different from a smoothness based analysis and therefore this data-dependence analysis essentially adds to the results presented in Chapter \eqref{Chapter3}.

\section{Mini Batches}

Problem \eqref{eq:optForm} is traditionally solved by a sequential SGD algorithm like \cite{pegasos}(for SVM) and despite the advantage of scalability these methods are inherently sequential and hence difficult to exploit in parallel computing settings. One potential way to parallelize these algorithms is through mini-batching. For SGD this corresponds to a update step performed using an average of the subgradients corresponding to $b(>1)$ samples. These computations can then be performed in parallel over $b$ processors in a distributed setting. One could then hope to achieve a perfectly linear speedup ($b$).

We develop a data-dependence understanding of mini-batches in the SGD settings and show that for these methods the minibatch speedup is related to how well \textit{conditioned} the data is. Specificly, we show that the key quantities in controlling the mini-batch speedup is $E\left[\norm{\Q_b}\right]$ (i.e. expected spectral norm) where $\Q_b$ corresponds to a prinicipal submatrix corresponding to the random subset $A \subset \{1,...,n\}$ such that $|A|=b$ and also $\rho^2$, the spectral norm of the sample gram matrix. Note that our results also apply to nonlinear SVM's.

\subsection{Mini-Batches in Stochastic Gradient Methods}

\begin{algorithm}[!htp]
   \caption{Mini Batch SGD}
   \label{alg:mbatchsgd}
\begin{algorithmic}
   \STATE {\bfseries Input:} $\{\x_i\}_{i=1}^N$, $\reg> 0$, $T \geq 1$ 
   \STATE
   \STATE {\bfseries Initialize:} set $\w(1) = {\bf 0} \in \R^d$.
   \FOR{$t=1$ {\bfseries to} $T$}
   \STATE Sample $A_t=\{i_t\}$ uniformly with replacement from $S$ such that $|A_t|=b$.
   \STATE Compute $\g_{A_t} \in  \sum_{j\in A_t}\partial\ell(\w(t)^{\trans}\x_{j})\x_{j}/b$
   \STATE $\w(t+1) = \w(t)(1-\reg\eta_t) - \eta_t \g_{A_t}\label{eq:updRule}$
   \ENDFOR
    \STATE {\bfseries Output:} $\bar{\w}(T) = \tfrac{2}{T}\sum_{t=\lfloor T/2 \rfloor +1}^T \w(t)$
\end{algorithmic}
\end{algorithm}

The sequential SGD algorithm (for e.g. Pegasos of \citet{SSSC11:pegasos}) analysis does not show any benefits to using mini-batches, same number of iterations are required even when large mini-batches are used. The main observation we make is that the $\norm{\g_{A_t}}$ can be bounded in terms of spectral properties of the data. The two quantities of interest here are $
\E[\norm{\Q_b}]$ and $1+\frac{(b-1)(N\rho^2-1)}{N-1}$.

Specifically we prove the following mini-batch result(s) for the above algorithm

\begin{theorem}\label{thm:pegtheorem}
For Algorithm \eqref{alg:mbatchsgd} on problem \eqref{eq:optForm} with a mini-batch of size $b$ we have that the expected error satisfies
\begin{align}
 E\big [ J(\bar{\w}(T)) \big ] - J(\w^*) \leq \frac{30L^2}{\reg} \cdot \frac{K_b}{b}.\frac{1}{T}
\end{align}
both for $K_b=\E[\norm{\Q_b}]$ and $K_b=1+\frac{(b-1)(N\rho^2-1)}{N-1}$.
\end{theorem}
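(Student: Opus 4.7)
The plan is to apply the standard strongly-convex SGD recursion in the style of Pegasos~\cite{SSSC11:pegasos} and Rakhlin--Shamir--Sridharan, and then route all of the data dependence through a single quantity: the expected squared norm of the mini-batch subgradient, which can be controlled by the spectral norm of the sampled Gram submatrix $\Q_{A_t}$.

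First I would derive the one-step recursion. Let $\Delta_t = \norm{\w(t) - \w^*}^2$ and let $\calF_t$ be the sigma-algebra generated by the sampling through iteration $t-1$. Since $\E[\g_{A_t}\mid \calF_t]$ is a subgradient of the data term of $J$ at $\w(t)$, combining this with the regularizer and $\reg$-strong convexity gives
\begin{align*}
\E[\Delta_{t+1}\mid \calF_t] \le (1 - \reg\eta_t)\Delta_t + \eta_t^2\,\E[\norm{\g_{A_t}}^2 \mid \calF_t] - 2\eta_t\bigl(J(\w(t)) - J(\w^*)\bigr).
\end{align*}

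Second, I would bound the mini-batch gradient norm data-dependently. Writing $\g_{A_t} = \tfrac{1}{b}\sum_{j \in A_t} \beta_j \x_j$ with $|\beta_j|\le L$ from the Lipschitz assumption, we get
\begin{align*}
\norm{\g_{A_t}}^2 = \frac{1}{b^2}\,\mbs{\beta}^{\trans}\Q_{A_t}\mbs{\beta} \le \frac{\spec(\Q_{A_t})}{b^2}\norm{\mbs{\beta}}^2 \le \frac{L^2\,\spec(\Q_{A_t})}{b}.
\end{align*}
Taking expectation over $A_t$ yields $\E[\norm{\g_{A_t}}^2] \le L^2 K_b / b$ for $K_b = \E[\spec(\Q_b)]$, which handles the first form. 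For the second form I would invoke the elementary bound of Tak\'a\v{c} et al.~\cite{TakacBRS:13icml} on the expected spectral norm of a random principal submatrix of the full Gram matrix, which gives $\E[\spec(\Q_b)] \le 1 + (b-1)(N\rho^2 - 1)/(N-1)$.

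Third, with step size $\eta_t = 1/(\reg t)$, I would multiply the recursion through by $t$, rearrange into telescoping form, and sum over the suffix $t \in \{\lfloor T/2\rfloor+1,\ldots,T\}$. Applying Jensen's inequality to $J$ at the suffix average $\bar{\w}(T) = \tfrac{2}{T}\sum_{t=\lfloor T/2\rfloor+1}^T \w(t)$ in the style of the Rakhlin--Shamir--Sridharan tail-averaging argument (which is precisely what the output rule of Algorithm~\ref{alg:mbatchsgd} is designed for) converts the bound on $\sum \E[J(\w(t))-J(\w^*)]$ into the claimed $\tfrac{30 L^2}{\reg}\cdot \tfrac{K_b}{b T}$ guarantee.

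The main obstacle I anticipate is reconciling the sampling model. Algorithm~\ref{alg:mbatchsgd} samples $A_t$ \emph{with} replacement, while the cleanest version of the $1 + (b-1)(N\rho^2-1)/(N-1)$ bound is stated for sampling \emph{without} replacement; I would need either a short coupling argument, or to directly expand $\E_{A_t}[\mbs{\beta}^{\trans}\Q_{A_t}\mbs{\beta}]$ in the with-replacement model and show it is dominated by the same quantity up to a harmless constant absorbed into the $30$. The explicit constant $30$ itself is the other piece of bookkeeping: the big-$O$ form is essentially immediate from the suffix-averaging lemma, but pinning down the concrete constant requires careful tracking of the factor-of-$2$ losses from Cauchy--Schwarz and from the $t$-weighted telescope.
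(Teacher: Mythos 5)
Your overall strategy is the same as the paper's: route all data dependence through $\E[\norm{\g_{A_t}}^2]$ via the quadratic form $\mbs{\beta}^{\trans}\Q_{A_t}\mbs{\beta}$, bound that by $L^2 K_b/b$ for both forms of $K_b$, and finish with the Rakhlin--Shamir suffix-averaging guarantee for strongly convex SGD (the paper cites their Theorem~5 with $G^2 = 4L^2K_b/b$ rather than re-deriving the telescoping sum, but that is cosmetic). Your observation about the with/without replacement mismatch is well taken: the algorithm samples with replacement while the clean identity behind $K_b = 1+\tfrac{(b-1)(N\rho^2-1)}{N-1}$ (the paper's Lemma \ref{lemma:vQv}) is a without-replacement computation, and the paper simply glosses over this. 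One small correction on the second form of $K_b$: what is needed (and what the paper proves) is the fixed-vector bound $\E[ \vv^{\trans}_{[A]} \Q \vv_{[A]} ] \le \tfrac{b}{N}K_b\norm{\vv}^2$, not the stronger claim $\E[\spec(\Q_b)]\le K_b$ that you attribute to Tak\'{a}\v{c} et al.; since $\mbs{\beta}$ is the restriction of a fixed subgradient vector $\boldsymbol{\chi}\in\R^N$ to the sampled indices, the fixed-vector lemma is exactly the right tool.

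The genuine gap is in your one-step recursion. The update is $\w(t+1) = \w(t) - \eta_t(\g_{A_t} + \reg\w(t))$, so the quadratic term in the recursion must be $\eta_t^2\,\E[\norm{\g_{A_t}+\reg\w(t)}^2]$, not $\eta_t^2\,\E[\norm{\g_{A_t}}^2]$. The extra piece $\reg^2\norm{\w(t)}^2$ is not harmless: if you bound it crudely (e.g.\ via $\norm{\w(t)}\le L/\reg$) you pick up an additive contribution of order $L^2$ that does not scale as $K_b/b$, and the claimed speedup disappears from the final bound. The paper handles this by unrolling the iterate, $\w(t) = -\tfrac{1}{\reg(t-1)}\sum_{\tau<t}\g_{A_\tau}$, applying Cauchy--Schwarz and the same Gram-matrix bound to each summand to get $\E[\norm{\w(t)}^2]\le \tfrac{L^2}{\reg^2}\cdot\tfrac{K_b}{b}$, and hence $\E[\norm{\g_{A_t}+\reg\w(t)}^2]\le 4L^2K_b/b$. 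You need this step (or an equivalent induction) before invoking the suffix-averaging theorem; it is where the paper's main-proof effort actually goes, and it is absent from your plan.
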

The above result suggests that when $K_b = 1$ we get a exact linear speedup. This happens when all the points $\x_i$ are orthogonal to each other. At the other end of the spectrum when $K_b = b$ we get no speedup at all. This corresponds to all the samples being the same upto a scalar multiple. Thus in the intermediate regime one could expect a linear speedup upto a fixed mini-batch size.

This has important implications for machine learning in general since samples from a sparse data set are more likely to be near orthogonal, especially if the features are uniformly spread out, and hence because of the above discussion we could expect to get linear speedups with much larger mini-batch sizes. 

The quantity $1+\frac{(b-1)(N\rho^2-1)}{N-1}$ also appears in the general framework introduced in \cite{richtarik} applied to the SVM dual where the authors establish the dependence of the parallel speedups obtained on the quantity. Hence Theorem \eqref{thm:pegtheorem} in essence unifies mini-batch speedup results for both the primal and dual SVM formulations.  
\section{Proof of Convergence For Mini Batch SGD}

First we prove the intermediate Lemmas. 

\begin{lemma}\label{lemma:vQv}
  For any $\vv \in \R^n$, $\Q \in \R^{N \times N}$ a Gram matrix of data points and randomly sampled (without replacement) $A \subset \{1,...,N\}$ such that $|A| = b$,
  \begin{align*}
\E[ \vv^{\trans}_{[A]} \Q \vv_{[A]} ] &=
  \tfrac{b}{N}\left[ (1-\tfrac{b-1}{N-1})\sum_{i=1}^N \Q_{ii} \vv_i^2 + \tfrac{b-1}{N-1}
  \vv^{\trans} \Q \vv\right ].\\
\intertext{Moreover, if $\Q_{ii} \leq 1$ for all $i$ and $\tfrac{1}{N}\|\Q\| \leq \rho^2$, then}
\E[ \vv^{\trans}_{[A]} \Q \vv_{[A]} ] &\leq \tfrac{b}{N} K_b \norm{\vv}^2, \text{ where}
\end{align*}
\begin{equation}
  \label{eq:betab}
  K_b \eqdef 1+\tfrac{(b-1)(N \rho^2 -1)}{N-1}.
\end{equation}
\end{lemma}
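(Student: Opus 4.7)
The plan is to compute the expectation directly by expanding the quadratic form in the entries of the random subset $A$. Writing $\vv^{\trans}_{[A]} \Q \vv_{[A]} = \sum_{i,j \in A} \Q_{ij}\vv_i\vv_j = \sum_{i,j=1}^N \Q_{ij}\vv_i\vv_j \,\mathbf{1}[i \in A]\mathbf{1}[j \in A]$, linearity of expectation reduces the problem to computing, for each ordered pair $(i,j)$, the probability that both indices are drawn into $A$ under sampling without replacement of size $b$ from $\{1,\ldots,N\}$.

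First I would compute these two probabilities: for $i=j$, $\Pr(i \in A) = b/N$, while for $i \neq j$ a standard counting argument gives $\Pr(i,j \in A) = b(b-1)/(N(N-1))$. Splitting the double sum into diagonal and off-diagonal parts then yields
\begin{align*}
\E[\vv^{\trans}_{[A]}\Q\vv_{[A]}]
&= \tfrac{b}{N}\sum_{i=1}^N \Q_{ii}\vv_i^2
+ \tfrac{b(b-1)}{N(N-1)}\sum_{i\neq j}\Q_{ij}\vv_i\vv_j \\
&= \tfrac{b}{N}\sum_{i=1}^N \Q_{ii}\vv_i^2
+ \tfrac{b(b-1)}{N(N-1)}\Bigl(\vv^{\trans}\Q\vv - \sum_{i=1}^N \Q_{ii}\vv_i^2\Bigr),
\end{align*}
and regrouping exactly gives the claimed closed form with coefficients $(1-\tfrac{b-1}{N-1})$ on the diagonal term and $\tfrac{b-1}{N-1}$ on $\vv^{\trans}\Q\vv$.

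For the inequality, the idea is that the first term is controlled by the diagonal bound $\Q_{ii}\le 1$, giving $\sum_i \Q_{ii}\vv_i^2 \le \norm{\vv}^2$, while the second is controlled by the spectral bound $\vv^{\trans}\Q\vv \le \spec(\Q)\norm{\vv}^2 \le N\rho^2\norm{\vv}^2$. Substituting and factoring $\tfrac{b}{N}\norm{\vv}^2$ out leaves $1 - \tfrac{b-1}{N-1} + \tfrac{(b-1)N\rho^2}{N-1} = 1 + \tfrac{(b-1)(N\rho^2 - 1)}{N-1}$, which is precisely $K_b$ as defined in \eqref{eq:betab}.

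There is no real obstacle in the proof, since everything reduces to elementary combinatorics once the indicator decomposition is written down; the only care needed is to separate the $i=j$ and $i\neq j$ contributions and to recognize that when $i=j$ the event $\{i,j \in A\}$ degenerates to $\{i \in A\}$, so the two cases must be handled with different probabilities rather than applying the off-diagonal formula uniformly.
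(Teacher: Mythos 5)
Your proposal is correct and follows essentially the same route as the paper's proof: the paper also splits the quadratic form into diagonal and off-diagonal contributions, uses the sampling-without-replacement probabilities $b/N$ and $b(b-1)/(N(N-1))$ (written there as expectations over uniformly drawn index pairs rather than indicator inclusion probabilities), and then bounds the two terms via $\Q_{ii}\le 1$ and $\vv^{\trans}\Q\vv\le N\rho^2\norm{\vv}^2$. The indicator decomposition you use is just a notational variant of the same combinatorial computation, so there is nothing substantive to distinguish the two arguments.
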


\begin{proof}
\begin{align*}
\E[ \vv^{\trans}_{[A]} \Q \vv_{[A]} ] &= \E[\sum_{i \in A}\vv_i^2 \Q_{ii} + \sum_{i,j\in A, i\ne j} \vv_i\vv_j \Q_{ij}] \\
&= b \E_i [\vv_i^2\Q_{ii}] + b(b-1) \E_{i,j}[\vv_i\vv_j \Q_{ij}] \\
&= \tfrac{b}{N}\sum_i\Q_{ii}\vv_i^2 + \tfrac{b(b-1)}{n(n-1)}\vv^{\trans}(\Q-\text{diag}(\Q))\vv\\
&= \tfrac{b}{N}[(1-\tfrac{b-1}{N-1})\sum_i\Q_{ii}\vv_i^2 +
\tfrac{b-1}{N-1}\vv^{\trans}\Q \vv ],\\
&\leq \tfrac{b}{N}[(1-\tfrac{b-1}{N-1})\norm{\vv}^2 +
\tfrac{b-1}{N-1}N\sigma^2 \norm{\vv}^2] = \tfrac{b}{N}\beta_b
\norm{\vv}^2. \qedhere
\end{align*}
\end{proof}
where we used using $\Q_{ii} \leq 1$ and $\|\Q\|\leq N\rho^2$ and the expectations are over $i,j$ chosen uniformly at
random without replacement

We can now apply Lemma~\ref{lemma:vQv} to $\g_{A}$ defined as
\begin{equation}
\g_{A} = - \tfrac{1}{b} \sum_{i\in A} \partial\ell(\w(t)^{\trans}\x_{i}) \x_i
\end{equation} 

\begin{lemma}\label{lemma:nablaL}
  For any $\w\in \R^d$ and and randomly sampled (without replacement) $A \subset \{1,...,N\}$ we have
\begin{align*}
\E [ \norm{\g_{A}}^2 ] \leq L \cdot \frac{1+(b-1)(N \rho^2 -1)/(N-1)}{b}\\
\E [ \norm{\g_{A}}^2 ] \leq L \cdot \frac{\E\left[\norm{\Q_A}\right]}{b}
\end{align*}
\end{lemma}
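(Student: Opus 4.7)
The plan is to reduce $\|\g_A\|^2$ to a quadratic form in the (sub)gradient coefficients and then invoke Lemma~\ref{lemma:vQv} for the first bound and a direct spectral-norm argument for the second. Concretely, for a fixed $\w$ let $\vv \in \R^N$ be the full ``coefficient'' vector with $\vv_i = \partial \ell(\w^{\trans}\x_i)$ for each $i \in [N]$; by the Lipschitz assumption $|\vv_i| \le L$, so $\|\vv\|^2 \le N L^2$. Writing $\g_A = -\frac{1}{b}\sum_{i\in A}\vv_i \x_i$ and expanding the inner product yields
\begin{align*}
\|\g_A\|^2 \;=\; \frac{1}{b^2}\sum_{i,j\in A}\vv_i \vv_j \,\x_i^{\trans}\x_j
\;=\; \frac{1}{b^2}\,\vv_{[A]}^{\trans}\,\Q\,\vv_{[A]},
\end{align*}
where $\Q$ is the full $N\times N$ Gram matrix and $\vv_{[A]}$ is the $\R^N$ vector obtained by zeroing out coordinates outside $A$ (equivalently $\vv_{[A]}^{\trans}\Q\vv_{[A]} = \vv_A^{\trans}\Q_A\vv_A$).

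For the first bound I would apply Lemma~\ref{lemma:vQv} directly to this quadratic form. Since $\Q_{ii} = \|\x_i\|^2 \le 1$ and $\tfrac{1}{N}\|\Q\| \le \rho^2$ by our data model, the lemma gives
\begin{align*}
\E\!\left[\vv_{[A]}^{\trans}\Q\vv_{[A]}\right] \;\le\; \frac{b}{N} K_b \,\|\vv\|^2 \;\le\; \frac{b}{N}K_b\cdot N L^2 \;=\; b L^2 K_b,
\end{align*}
so dividing by $b^2$ yields $\E[\|\g_A\|^2] \le L^2 K_b/b$ with $K_b = 1 + (b-1)(N\rho^2-1)/(N-1)$, matching the first claimed inequality.

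For the second bound I would avoid Lemma~\ref{lemma:vQv} and argue pointwise: conditional on $A$, the subvector $\vv_A \in \R^b$ satisfies $\|\vv_A\|^2 \le bL^2$, and the Rayleigh quotient bound gives
\begin{align*}
\|\g_A\|^2 \;=\; \tfrac{1}{b^2}\,\vv_A^{\trans} \Q_A \vv_A \;\le\; \tfrac{1}{b^2}\,\|\vv_A\|^2\,\spec(\Q_A) \;\le\; \frac{L^2\,\spec(\Q_A)}{b}.
\end{align*}
Taking expectation over $A$ yields $\E[\|\g_A\|^2] \le L^2 \E[\spec(\Q_A)]/b$, which is the second claimed bound (up to the $L$ vs.\ $L^2$ typo in the statement).

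No single step is a serious obstacle here; the only subtlety is bookkeeping the two natural interpretations of $\vv_{[A]}$ (zero-padded in $\R^N$ vs.\ the $\R^b$ subvector) so that the application of Lemma~\ref{lemma:vQv} lines up with the Gram quadratic form for $\g_A$, and observing that the loss being $L$-Lipschitz is exactly what pins $\|\vv\|_\infty$, giving $\|\vv\|^2 \le NL^2$ for the first bound and $\|\vv_A\|^2 \le bL^2$ for the second. Both inequalities then follow from the respective ``averaged'' and ``worst-case'' spectral estimates of the random principal submatrix $\Q_A$.
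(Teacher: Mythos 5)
Your proposal is correct and follows essentially the same route as the paper: both expand $\norm{\g_A}^2$ as the quadratic form $\tfrac{1}{b^2}\vv_{[A]}^{\trans}\Q\vv_{[A]}$ in the subgradient coefficients, invoke Lemma \ref{lemma:vQv} with $\norm{\vv}^2 \le NL^2$ for the first bound, and use the pointwise Rayleigh-quotient estimate $\vv_A^{\trans}\Q_A\vv_A \le \norm{\vv_A}^2\spec(\Q_A)$ for the second. Your observation that the stated $L$ should be $L^2$ is consistent with what the paper's own derivation actually produces.
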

\begin{proof}
If $\boldsymbol{\chi}\in\R^n$ is the vector with entries $\partial\ell(\w(t)^{\trans}\x_{i})$, then using Lemma \ref{lemma:vQv}
\begin{align*}
\E [ \norm{\g_{A}}^2  ] &= \E [\| \tfrac{1}{b} \sum_{i\in A} \partial\ell(\w(t)^{\trans}\x_{i}) \x_i\|^2 ]   \\  
                     &= \frac{L^2}{b^2} \E[\boldsymbol{\chi}_{[A]}^{\trans} \Q \boldsymbol{\chi}_{[A]}]\\
                     &\leq \frac{L^2}{b^2} \tfrac{b}{N}\beta_b\norm{\boldsymbol{\chi}}^2 \\
                     &\leq L^2 \frac{1+(b-1)(N \rho^2 -1)/(N-1)}{b}
\end{align*}

Additionally if we have $\boldsymbol{\chi}\in\R^b$ and $Q_A$ is the $b \times b$ submatrix of inner products corresponding to the sampled points in $A$
\begin{align*}
\E [ \norm{\g_A}^2 ] &= \E [\| \tfrac{1}{b} \sum_{i A} \partial\ell(\w(t)^{\trans}\x_{i}) \x_i\|^2 ]   \\  
                     &= \frac{L^2}{b^2} \E[\boldsymbol{\chi}^{\trans} \Q_A \boldsymbol{\chi}] \\
                     &\leq L^2 \cdot \frac{\norm{\boldsymbol{\chi}}^2 \E\left[\norm{\Q_A}\right]}{b^2} \\
                     &= L^2 \cdot \frac{\E\left[\norm{\Q_A}\right]}{b} \\
\end{align*}
\end{proof}

Now we have for the proof of Theorem \ref{thm:pegtheorem}

\begin{proof}
 Unrolling the iterate from Algorithm \ref{alg:mbatchsgd} with $\eta_t=1/(\reg t)$ yields
  \begin{equation}\label{eq:hs8js8s}
  \vc{\w}{t} = -\frac{1}{\reg (t-1)}\sum_{\tau=1}^{t-1} g^{(\tau)},
  \end{equation}
where $g^{(\tau)}\eqdef \g_{A_\tau}$. Using  the inequality
$\|\sum_{\tau=1}^{t-1}  g^{(\tau)}\|^2  \leq (t-1) \sum_{\tau=1}^{t-1} \|g^{(\tau)}\|^2$, we now get
\begin{align*}\label{eq:js8sjs800}
\E [ \|\vc{\w}{t}\|^2 ] &\leq \sum_{\tau=1}^{t-1}\frac{\E [\|g^{(\tau)}\|^2 ]}{\reg^2(t-1)} \\
                       &\leq \frac{L^2}{\reg^2} \cdot \frac{\beta_b}{b},
\end{align*}
Then using the above we finally have
\begin{align*}
E\left[\norm{\boldsymbol{\nabla}^{(t)}}^2\right] &\leq 2(\reg \norm{\w(t)} +L^2\frac{\beta_b}{b})\\
\E [\|\boldsymbol{\nabla}^{(t)}\|^2] &\leq 2(\reg^2 \E[ \|\w(t)\|^2  ] + L^2\frac{\beta_b}{b}) \\                                &\leq \frac{4L^2\beta_b}{b}
\end{align*}
where $\boldsymbol{\nabla}^{(t)} = \g_{A_t} + \reg\w(t)$
The performance guarantee is now given by the analysis of SGD with tail
averaging (Theorem~5 of \cite{RakhShamir:12}, with $\alpha=\tfrac{1}{2}$ and $G^2=4\tfrac{\beta_b}{b}$).
\end{proof}

As a constructive exercise we show the mini-batching result (without proof) for the case of convex constrained objectives applied to the SVM problem.

\subsection{Constrained Convex Objectives - SVM}

However in case one is interested in a constrained SVM formulation
\begin{equation}
  \label{eq:constSV}
  \min_{w\in\R^d,\norm{\w} \leq B}  F_B(\w) =\frac{1}{n} \sum_{i=1}^{n} \max \left[ 0,1 - y_i\langle \w,\x_i \rangle \right]  
\end{equation}
then for projected sub-gradient descent with iterates of the form
 \begin{equation}
  \label{eq:constsgd}
  \w(t+1) \leftarrow \Pi_B\left(\w(t) - \eta_t \nabla
  \hat{H}_{A_t}\w(t) \right)
\end{equation}
where $\Pi_B(w)$ is a projection onto $\norm{\w}\leq B$. We obtain a similar result for the averaged iterate $\bar{\w}(T)$ with a step-size $\eta_t = (B\sqrt{b/K_b}).(1/\sqrt{t})$

\begin{theorem}\label{thm:consttheorem}
After $T$ iterations of constrained SGD with Minibatch of size $b$ we have that for the averaged iterate $\bar{\w}(T) = \sum_{t=1}^T \w(t)/T $
\begin{equation}
  \label{eq:constsubopt}
  \E\left[ F_B(\bar{\w}(T)) \right] -\min_{\norm{\w}\leq B} F_B(\w) \leq \sqrt{ \frac{ (K_b/b) B^2}{T}}
\end{equation}
both for $K_b=\beta_a=\E[\norm{\Q_b}]$ and $K_b=\beta_b=1+\frac{(b-1)(\norm{\Q}-1)}{n-1}$.
\end{theorem}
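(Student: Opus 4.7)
The plan is to apply the classical projected stochastic subgradient analysis for a convex (non-strongly convex) objective on a bounded convex domain, with Lemma~\ref{lemma:nablaL} serving as the sole data-dependent ingredient controlling the mini-batch subgradient's second moment. The hinge loss has subgradients bounded by $\norm{\x_i}\le 1$, so Lemma~\ref{lemma:nablaL} applies with $L=1$ and yields $\E[\norm{\g_{A_t}}^2]\le K_b/b$ for either choice of $K_b$.

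First, I would let $\w^*\in\argmin_{\norm{\w}\le B} F_B(\w)$ and let $\calF_{t-1}$ denote the sigma algebra generated by $A_1,\ldots,A_{t-1}$. Because $\Pi_B$ is non-expansive on the ball $\{\w:\norm{\w}\le B\}$ and $\g_{A_t}$ is an unbiased subgradient of $F_B$ at $\w(t)$, the standard one-step inequality gives
\begin{align*}
\norm{\w(t+1)-\w^*}^2
&\le \norm{\w(t)-\w^*}^2 - 2\eta_t\,\g_{A_t}^{\trans}(\w(t)-\w^*) + \eta_t^2\norm{\g_{A_t}}^2,
\end{align*}
and taking conditional expectation and invoking convexity of $F_B$ yields
\begin{align*}
\E[\norm{\w(t+1)-\w^*}^2\mid\calF_{t-1}]
&\le \norm{\w(t)-\w^*}^2 - 2\eta_t(F_B(\w(t))-F_B(\w^*)) + \eta_t^2\,\E[\norm{\g_{A_t}}^2\mid\calF_{t-1}].
\end{align*}

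Next I would rearrange, take full expectations, and sum from $t=1$ to $T$. Using $\norm{\w(1)-\w^*}^2\le B^2$ together with the variance bound above, with a constant step size $\eta_t\equiv\eta$ this telescopes to
\begin{align*}
\sum_{t=1}^T \E[F_B(\w(t))-F_B(\w^*)]
&\le \frac{B^2}{2\eta} + \frac{\eta T}{2}\cdot\frac{K_b}{b}.
\end{align*}
Dividing by $T$ and using Jensen's inequality to move the average inside $F_B$ produces $\E[F_B(\bar{\w}(T))]-F_B(\w^*)\le B^2/(2\eta T)+(\eta/2)(K_b/b)$, and optimizing in $\eta$ (i.e.\ $\eta=B\sqrt{b/(K_b T)}$) yields the advertised rate $B\sqrt{K_b/(bT)}$. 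The anytime schedule $\eta_t=(B\sqrt{b/K_b})/\sqrt{t}$ stated in the theorem is handled by the standard weighted-averaging argument: keep the $\eta_t$-weighted sum, use $\sum_{t=1}^T 1/\sqrt{t}\le 2\sqrt{T}$, and apply Abel summation to absorb the telescoping $\norm{\w(t)-\w^*}^2$ terms.

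The main obstacle, and the point where data dependence enters, is controlling $\E[\norm{\g_{A_t}}^2]$ tightly; this is precisely what Lemma~\ref{lemma:nablaL} achieves via the quadratic-form identity of Lemma~\ref{lemma:vQv}. The decomposition of $\E[\mbs{\chi}^{\trans}_{[A]}\Q\mbs{\chi}_{[A]}]$ into a diagonal piece (contributing the $1$ in $K_b$) and an off-diagonal piece controlled by $\norm{\Q}\le N\rho^2$ produces the $(b-1)(N\rho^2-1)/(N-1)$ correction, and the alternative form $K_b=\E[\norm{\Q_b}]$ follows from the second bound in Lemma~\ref{lemma:nablaL}. Once this variance estimate is in hand, the remainder is routine convex-analysis bookkeeping: no smoothness or strong convexity is required because the constraint set itself supplies boundedness of the iterates.
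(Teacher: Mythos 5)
The paper states Theorem~\ref{thm:consttheorem} explicitly ``without proof,'' so there is no in-paper argument to compare against; the nearest analogue is the proof of Theorem~\ref{thm:pegtheorem}, which handles the strongly convex case by unrolling the iterate and invoking the tail-averaging result of Rakhlin--Shamir. Your route --- the classical projected stochastic subgradient analysis with the single data-dependent input $\E[\norm{\g_{A_t}}^2]\le K_b/b$ from Lemma~\ref{lemma:nablaL} --- is the natural and correct way to fill this gap, and it is sound: $\g_{A_t}$ is an unbiased subgradient of $F_B$ at $\w(t)$, the projection is non-expansive with $\Pi_B(\w^*)=\w^*$, and the telescoping plus Jensen gives exactly $B\sqrt{K_b/(bT)}=\sqrt{(K_b/b)B^2/T}$ for the $T$-tuned constant step. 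The only caveat worth recording is about constants under the schedule $\eta_t=(B\sqrt{b/K_b})/\sqrt{t}$ actually stated in the paper: the standard weighted-telescoping argument there requires bounding $\norm{\w(t)-\w^*}$ by the diameter $2B$ of the constraint ball (not $B$), and together with $\sum_{t\le T}1/\sqrt{t}\le 2\sqrt{T}$ this yields the advertised rate only up to a small absolute constant (roughly $3/2$ to $3$ depending on bookkeeping), not with the leading constant exactly $1$ as displayed in \eqref{eq:constsubopt}. This is a constant-factor discrepancy inherent in the theorem as stated, not a flaw in your approach; everything else, including the identification of where the data dependence enters through Lemmas~\ref{lemma:vQv} and~\ref{lemma:nablaL}, is right.
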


In this setting one needs to compute $K_b$ to be used in the step-size. So it is fruitful to ask whether $\beta_a$ or $\beta_b$ is a better option. Estimating $\beta_a$ is simply a matter of sampling $m$ principal random submatrices of size $b$ and averaging their respective spectral norms. In contrast estimating $\beta_b$ requires the computation of the spectral norm of the entire data matrix. For extremely large data sets this can be a huge bottleneck.

Moreover from a practicioners persepctive to get a good idea of the speedups potentially obtained from a computational setting (no. of cores, threads)and a given data set it can be useful to obtain estimates of $b/\beta_a$ (b-ratio) prior to executing the algorithm. 
 \begin{figure*}[ht!]\label{table:speedups}
 \begin{center}
\begin{tabular}{rr}
\includegraphics[width=3.5in]{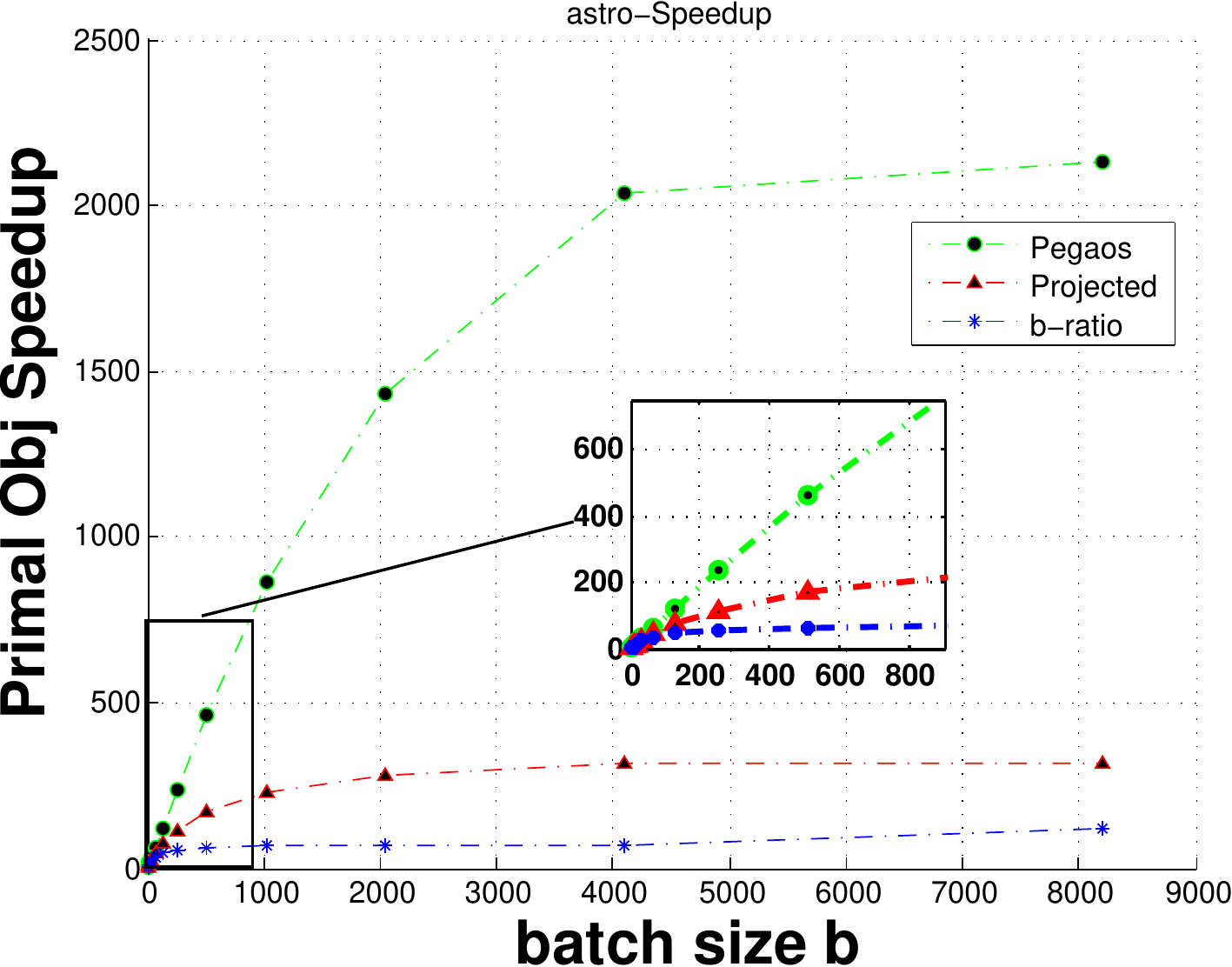}\\
\includegraphics[width=3.5in]{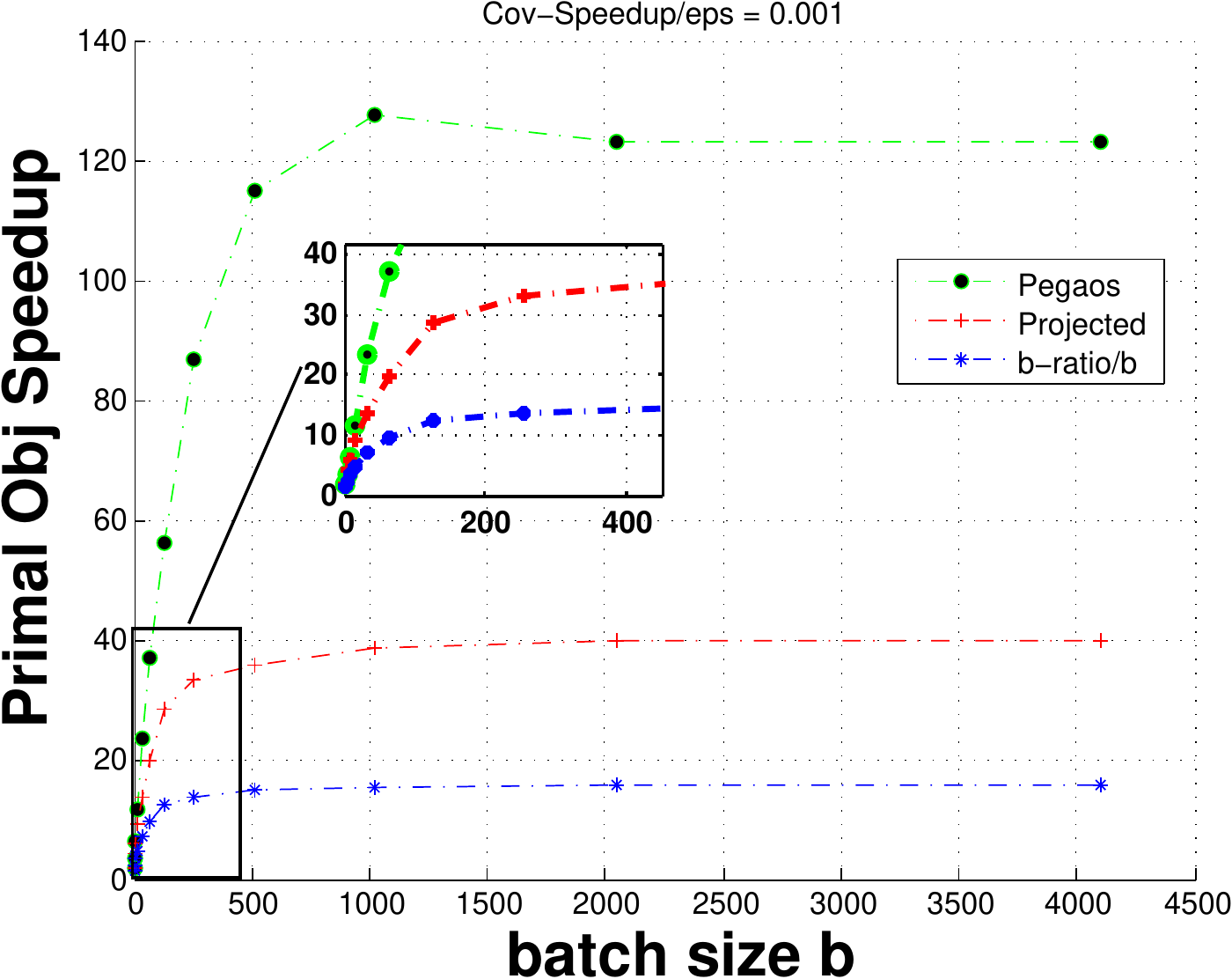}
\end{tabular}
\end{center}
 \caption{\small Speedup obtained (left vertical axis) to optimize a $0.001$-accurate \emph{primal} solution for different mini-batch sizes $b$ (horizontal axis). A) Astro-ph (astro) B) Covertype (cov)}
\end{figure*}
\newpage
\section{Empirical Validation}

Table 1 shows plots of speedups obtained for two datasets with varying sparsity. Astro-ph (Sparsity $0.08\%$) and Covertype (Sparsity $22.2\%$). The speedups were estimated by first computing a near optimal solution by running Pegasos and Projected SVM with a batch size $b=1$ for $2 \times 10^6$ iterations and then for each mini-batch size we computed the number of iterations to get within $\epsilon = 0.001$ of the computed approximate solution. The speedups are then simply the ratio of the number of iteration with $b=1$ divided by the number of required iterations for the different batch sizes. This was done for both pegasos with mini-batches (green) and projected SVM with mini-batches (red). The blue line corresponding to the $b$-ratio defined earlier represents a lower bound on the speedup obtained as per Theorem \ref{thm:pegtheorem} and \ref{thm:consttheorem}. Note that $K_b=\beta_a$ in these experiments and the results were averaged across $5$ runs.

We can see that in accordance with our theoretical prediction, since astro-ph is a lot sparser than covertype, we get near linear speedups (for pegasos) up to a batch size of $b=1024$. While for covertype we get near linear speedups only upto batch size $b=128$. 

\section{Summary}

We explored the data-dependence of a mini-batch approach to SGD and discovered that the speedups obtained depend on spectral properties of data covariance (mini-batch sample and overall sample). These results are of importance to the practitioner since we demonstrated empirically that sparse datasets are more amenable to speedups. 

In the next chapter we present an empirical exploration of data-dependence in an extreme form of distributed optimization where the nodes only communicate once.



\chapter{One Shot Averaging and Data Dependence - Empirical Evidence} 
\label{Chapter6}
\lhead{Chapter 6. \emph{One Shot Averaging}} 

As discussed in Chapter \eqref{Chapter1} some authors have proposed methods in which the nodes in the network process their local data and average their iterates only once~\citep{MannMMSW:09,McDonaldHM:2010,ZinkevichWSL:10,ZhangDW:12}. Zhang et al.~\cite{ZhangDW:12} recently provided an analysis of this procedure under smoothness and bounded moment assumptions and showed that the rate of convergence is $\mc{O}\left(\frac{1}{mn} + \frac{1}{n^{3/2}} \right)$.  

We conjecture that this rate of convergence for non smooth objectives depends on $\rho^2$. To add weight to this statement we provide  experimental evidence that seems to point towards the conjecture. We leave deriving a refined analysis of the average-at-the-end procedure incorporating this dependence for future work.

In all the experiments on real datasets described in Chapter \eqref{Chapter1} we looked at $\ell_2$-regularized hinge loss classification as a representative for problem \eqref{eq:optForm}.  

\section{Impact of $\rho^2$ on \textit{Average-at-the-end}}

We empirically corroborate our conjecture that the convergence rate of the mean squared error for the \textit{average-at-the-end} scheme also depends on $\rho^2$. The objective function is the $\ell_2$-regularized squared hinge loss. We measured the relative mean squared error (RMSE) $\E[\norm{\bar{\w}_n - \w^{*}}^2]/\norm{\w^{*}}^2$ and test errors.  We compared \textit{average-at-the-end} for partitions of the dataset of $N$ samples on $m$ machines (for varying $m$), a centralized SGD with $N$ samples on one machine, and a ``local'' single SGD with $n = M/n$ samples on one machine.  The expected relative mean squared error was estimated by computing an average over $50$ runs.

\begin{figure}[htp!]
\centering
\includegraphics[width=4in]{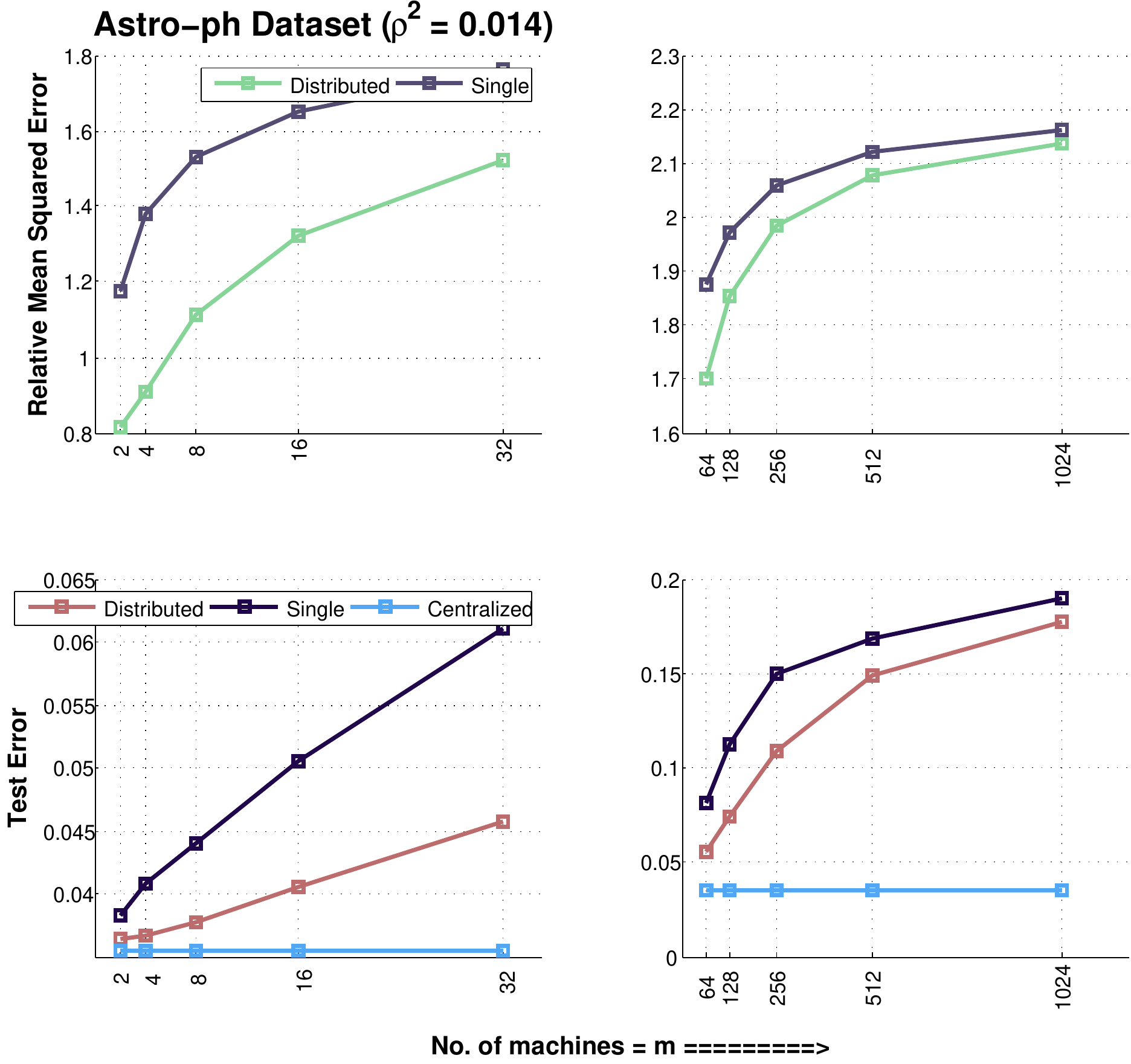}
\label{fig:dist-astro}\\
\includegraphics[width=4in]{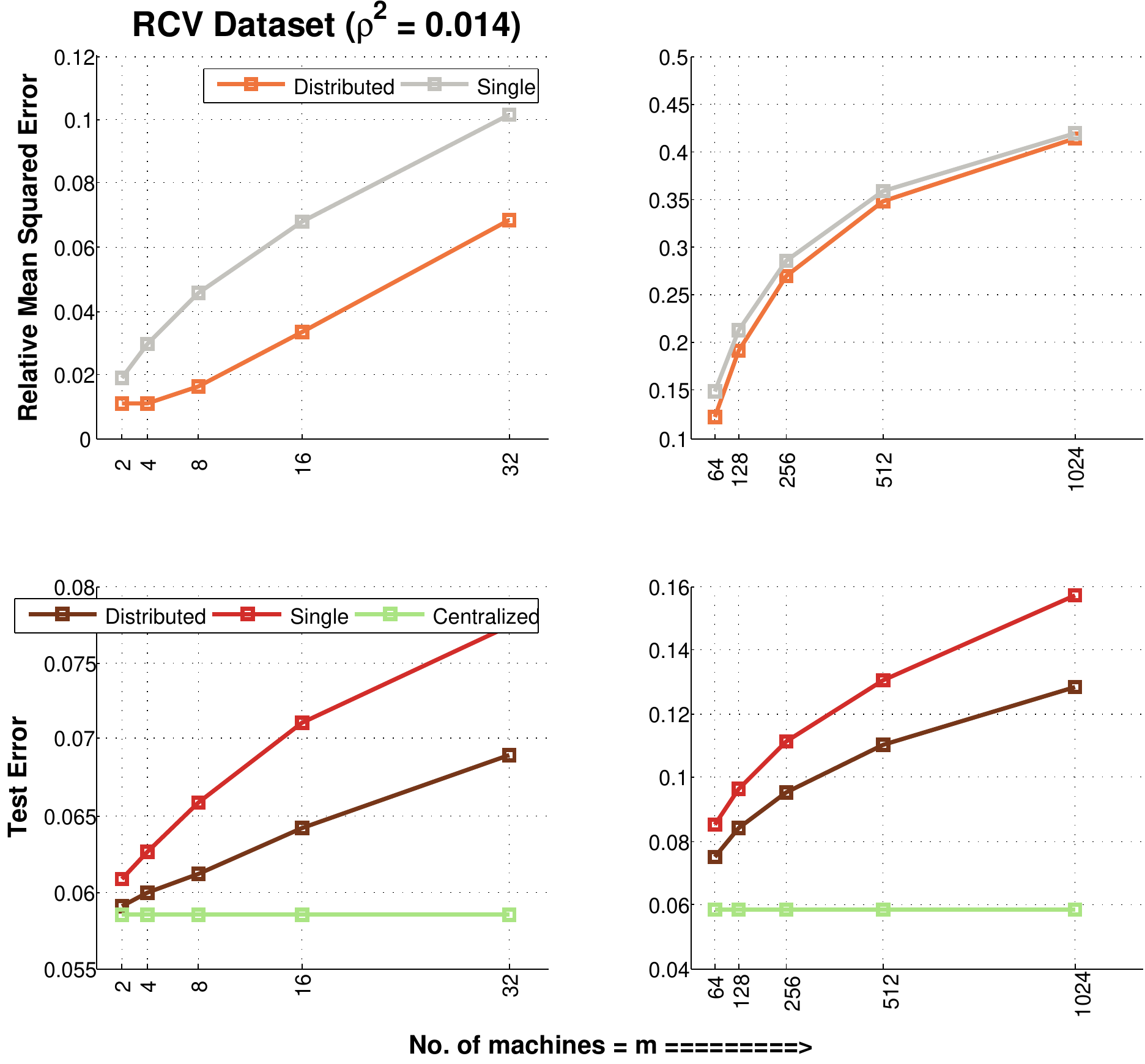}
\label{fig:dist-rcv}
\caption{Average-at-end SGD Performance on good datasets (\astro~with $\rho^2=0.014$ and \rcv~with $\rho^2=0.013$) as we increase the number of machines $m$. For datasets with smaller $\rho^2$ the performance of the average-at-end strategy is significantly better than a single machine output, but worse than the centralized scheme. }
\label{fig:good}
\end{figure} 

\begin{figure}[htp!]
\centering
\includegraphics[width=4in]{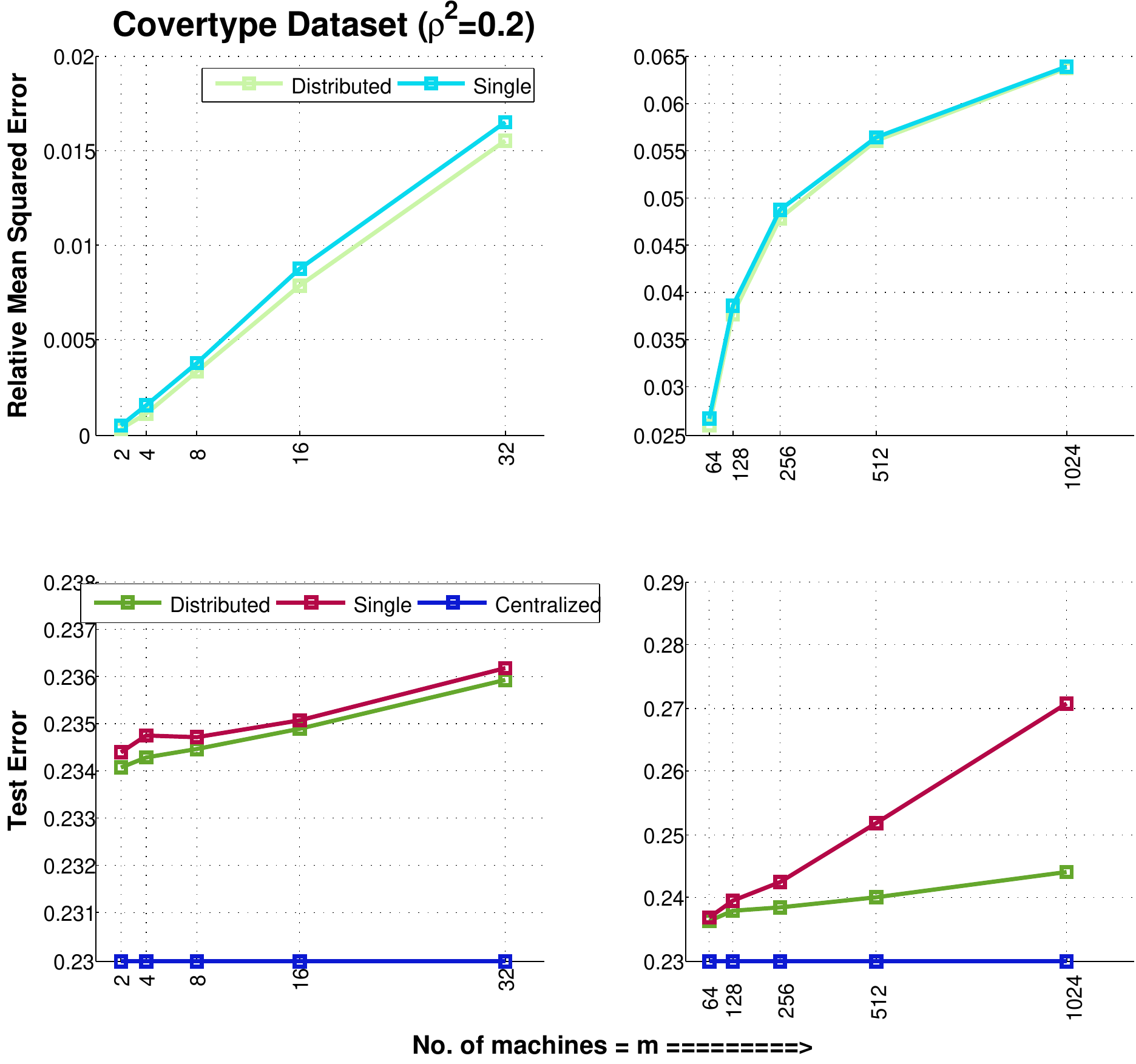}
\label{fig:dist-covertype} \\
\includegraphics[width=4in]{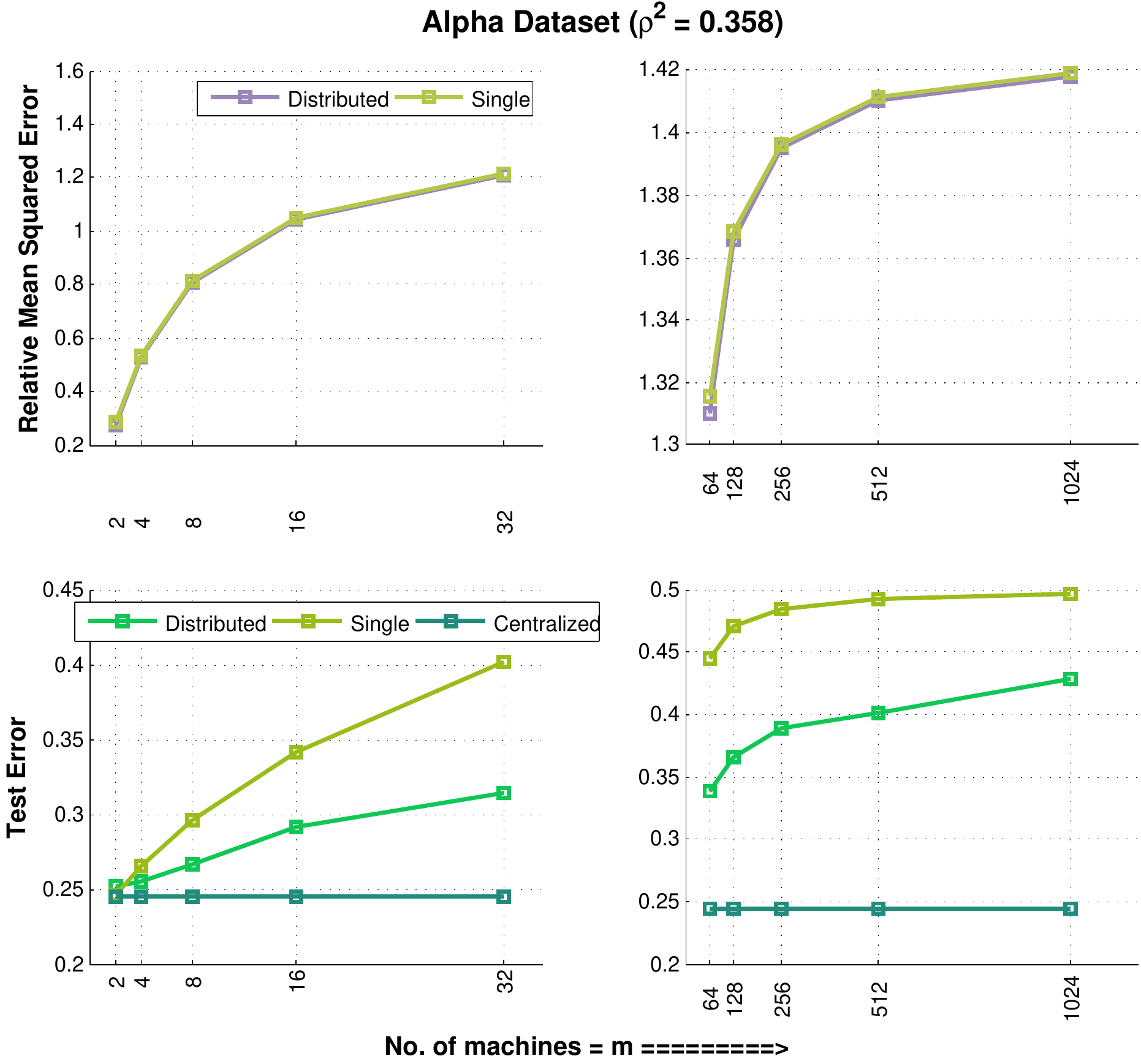}
\label{fig:dist-alpha}
\caption{a) Distributed SGD Performance on bad datasets as we increase the number of machines $m$. It can be seen that with these datasets with larger $\rho^2$ the performance of the average-at-end startegy is no better than a single machine output. a) Covertype ($\rho^2=0.2$) b) Alpha dataset ($\rho^2=0.35$).}
\label{fig:bad}
\end{figure}

On all the datasets we run SGD with step-size $\eta_t= 1/(\lambda (t+t_0))$ with $\lambda$ given in Table \ref{tab:data}. The algorithm is run for one pass over the data for both local and centralized schemes. Figure \ref{fig:good} shows the results for datasets with lower $\rho^2$ (\astro~and \rcv); these results indicate that for the \textit{average-at-end} scheme the RMSE is lower than for a single machine and this gap decreases (as conjectured) when we partition the data into more and more machines. Indeed, even for $m>256$ machines we get some benefit from \textit{average-at-the-end}. This is in stark contrast to Figure \ref{fig:bad} wherein we see that for the datasets with a relatively larger $\rho^2$ (\ctype~and \aset) there is no significant gap in the RMSE of \textit{average-at-the-end} and the single machine scheme. This suggests that the spectral norm of the covariance matrix also controls the error of the \textit{average-at-the-end} scheme. 

For the test error itself the \textit{average-at-end} performs better than a single machine but it is much less clear how this gain depends on the properties of the data distribution. Note that for this set of experiments the theoretical results obtained by Zhang et al ~\cite{ZhangDW:12} do not apply since the smoothed Hinge loss is not twice differentiable.

\section{Summary}

We conjectured that the mean squared error for the \textit{average-at-end} approach also depends on $\rho^2$ and provided empirical evidence. From a theoretical perspective we have some understanding of how the data-dependence terms should appear in the bounds but the analysis of the dependence on sample size is incomplete and we relegate it to future work.

In the final chapter we describe an efficient methodology to find a doubly stochastic matrix $\bP$ to facilitate faster convergence for a given graph topology for general graph based optimization problems. 



\chapter{Optimizing Doubly Stochastic Matrices} 
\label{Chapter6}
\lhead{\emph{Doubly Stochastic Graph Optimization}} 

In the previous Chapters the graph matrix $\bP$ was assumed to be given. However for consensus optimization algorithms we know that the convergence rate depends on the mixing rate of the Markov random walk described by the transitions matrix $\bP$. This leads us to the graph optimization problem.

Graph optimization is a class of problems that assigns edge weights or transition probabilities to a given graph which minimize a given criterion usually subject to some connectivity and other constraints. An example is the fastest mixing Markov chain problem \cite{DiaconisBoyd}, where the object is to assign transition probabilities that minimize the mixing rate of a Markov random walk on a given graph. Aside from consensus optimization the mixing rate problem has been shown to arise in a class of gossip algorithm problems \cite{BoydGhoshBalaji} where the object is to find an averaging algorithm or equivalently a transition matrix such that the averaging time over the graph is minimized. Other related problems that involve optimization over spectral functions of doubly stochastic matrices include
\begin{enumerate}
\item Minimizing Effective Resistance on a Graph \cite{GhoshBoyd}, where the idea is to choose a random walk on the graph that minimizes the average commute time between all nodes.
\item Finding the best doubly stochastic approximation to a given affinity matrix. This arises in the context of spectral clustering in machine learning.
\end{enumerate}

Here we consider the fastest mixing Markov chain(fmmc) problem and present an efficient approximate solution based on using a smaller subset of the space of large doubly stochastic transition matrices.  This involves a computationally expensive pre-processing step but needs to be executed only once for a given problem. In the next section we describe the notation and the underlying results used to reduce the dimensionality of the problem.

\section{Problem Formulation}

\subsection{Fastest Mixing Markov Chain}

Consider a symmetric Markov chain on a graph $G$ with a transition matrix $\bP \in \Re^{m\times m}$. The stationary distribution in this case is the uniform distribution $\mbf{\pi}=(1/n)\mbf{1}^{T}$ and the mixing rate measures how fast an initial distribution converges to the uniform. This rate of convergence is measured by the second largest eigenvalue modulus (SLEM) of $\bP$, $\mu(\bP)=\max(\lambda_2(\bP),-\lambda_n(\bP))$:  the smaller it is the faster the Markov chain converges. Here $\lambda_2(\bP)$ and $\lambda_n(\bP)$ are the second largest and the smallest eigenvalues of $\bP$. The problem of finding the fastest mixing Markov chain was described in \cite{DiaconisBoyd}. It can be written as
\begin{align*}
& \underset{\bP}{\text{min}}\ \ \mu(\bP) \\
& \text{subject to} \\
& \bP\mathbf{1} = \mathbf{1},\ \bP^{\trans} = \bP, \ \bP \geq \mbf{0} \\
& \bP_{ij} = 0 \ \textbf{if} \ \{i,j\} \notin E.
\end{align*}

The problem can be expressed as an SDP and solved using standard techniques and also for very large graphs with more $100,000$ or more edges a subgradient method is presented in \cite{DiaconisBoyd}.
\subsection{BN Decomposition}
Let $G=(V,E)$ be an undirected graph with $m$ vertices and $k$ edges . There is a Markov chain associated with the graph such that the weight of each edge is the transition probability $p_{ij}$ from the $i$ to the $j$th node. These transition probabilities are described by a doubly stochastic $m\times m$ matrix $\bP$. The entries are zero only if there is no corresponding edge in the given graph.

We can write the symmetric stochastic matrix $\bP$ as a convex combination of Permutation matrices using the following result due to Birkhoff \cite{Minc}\\

\begin{theorem}
Any $m\times m$ matrix $\bP$  is doubly stochastic if and only if there are $M$ $m\times m$ permutation matrices $\bP_1$,...,$\bP_M$ and positive scalars $\theta_1$,...,$\theta_M$ such that \\
\begin{equation}
\bP=\sum_{i=1}^{M}\theta_i \bP_i \ \ \ \text{and} \ \ \ \sum_{i=1}^{M}\theta_i=1
\end{equation}
\end{theorem}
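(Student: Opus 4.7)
The plan is to prove the two directions separately. The ``if'' direction is immediate: each permutation matrix is trivially doubly stochastic (rows and columns sum to $1$ and entries are nonnegative), and the set of doubly stochastic matrices is convex, so any convex combination $\sum_{i=1}^M \theta_i \bP_i$ with $\theta_i > 0$ and $\sum_i \theta_i = 1$ is doubly stochastic. So the real content is in the ``only if'' direction, which I would prove by a constructive induction on the number of nonzero entries in $\bP$ (its support size).

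For the inductive step, given a doubly stochastic $\bP$ with support $S(\bP) = \{(i,j) : \bP_{ij} > 0\}$, the key claim is that $S(\bP)$ contains the support of some $m \times m$ permutation matrix $\bP_1$; equivalently, the bipartite graph $H$ on rows and columns with edges $S(\bP)$ has a perfect matching. Once this is established, let $\theta_1 = \min\{\bP_{ij} : (i,j) \in S(\bP_1)\} > 0$. Then $\bP' = \bP - \theta_1 \bP_1$ has all entries nonnegative, row and column sums equal to $1 - \theta_1$, and strictly smaller support than $\bP$ (the minimizing entry is zeroed out). If $\theta_1 = 1$ we are done; otherwise, $\bP'/(1 - \theta_1)$ is doubly stochastic with strictly smaller support, so by induction it is a convex combination $\sum_{i \geq 2} \tilde{\theta}_i \bP_i$ of permutation matrices with $\sum_{i \geq 2} \tilde{\theta}_i = 1$. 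Setting $\theta_i = (1 - \theta_1) \tilde{\theta}_i$ for $i \geq 2$ gives the desired decomposition, and since the support strictly decreases each step the recursion terminates in at most $m^2$ steps.

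The main obstacle is establishing the perfect-matching claim, which I would prove via Hall's marriage theorem. For any subset $R$ of rows, let $N(R)$ be the set of columns $j$ such that $(i,j) \in S(\bP)$ for some $i \in R$. Summing $\bP_{ij}$ over $i \in R$ and all $j$ gives $|R|$ (row sums are $1$); but this sum is also at most $\sum_{j \in N(R)} \sum_{i} \bP_{ij} = |N(R)|$ (column sums are $1$ and only columns in $N(R)$ contribute). Hence $|N(R)| \geq |R|$, Hall's condition holds, and a perfect matching exists, yielding the desired permutation $\bP_1$ in the support.

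I expect the technical steps to be routine once the Hall-condition argument is in place; the only subtlety is the bookkeeping that the residual $\bP'/(1 - \theta_1)$ really is doubly stochastic with strictly smaller support, which follows directly from the choice of $\theta_1$ as the minimum entry over $S(\bP_1)$. The final count $M \leq m^2$ (or in fact $M \leq m^2 - 2m + 2$ by a sharper analysis using the dimension of the Birkhoff polytope) follows from the termination bound on the recursion.
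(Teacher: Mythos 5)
Your proof is correct and is the standard constructive argument: Hall's marriage theorem guarantees a permutation matrix supported inside the support of $\bP$, you subtract off the minimum matched entry, and induct on support size — which is exactly the Dulmage--Halperin procedure the paper presents as its BN decomposition algorithm in Table 1. The paper itself offers no proof (it simply cites Birkhoff via Minc and states the algorithm), so your write-up supplies the same approach with the one nontrivial step — the verification of Hall's condition ensuring the bipartite matching always exists — made explicit.
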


\begin{table}
 Input- A $m\times m$ doubly stochastic matrix $\mbf{A}$
\begin{enumerate}
\item for $ l=1:m^2+m-2$
\item Using Bipartite Matching find a permutation $\pi_{l}$ of vertices ${1,...,m}$ such that each $\mbf{A}_{i,\pi_i}$ is positive
\item $\theta_l = \min_{i} \ (\mbf{A}_{i,\pi_i})$
\item $\mbf{A} = \mbf{A}- \theta_l \bP_{\pi_l}$, where $\bP_{\pi_l}$ is a permutation matrix corresponding to $\pi_l$.
\item Exit if all entries of $\mbf{A}$ are zero
\end{enumerate}
Output $(\theta_i,\bP_{\pi_i})$ such that $\mbf{A}=\sum_{i=1}^{M}\theta_i \bP_{\pi_i}$
\caption{BN Decomposition Algorithm for a Doubly Stochastic Matrix}
\end{table}

Some bounds on $M$ exist in the literature \cite{Minc}, but as we shall see it becomes irrelevant for our purposes. Now given a matrix $\bP$ an algorithm (table 1) based on a proof given by Dulmage and Halperin is described in \cite{MarshallOlkin}. It involves bipartite graph matching and to each such matching corresponds a permutation matrix.  These permutation matrices define a basis for a certain subset of the space of $m\times m$ doubly stochastic matrices.  

\subsubsection{Identifying Basis Subset}

If we have a reasonable choice for a Markov chain that mixes fast, for e.g. the Metropolis Hastings chain. We hope to use its BN decomposition to select a permutation basis.  Having then identified such a permutation basis we can hope to solve for the fastest mixing chain by optimizing over the $\theta$'s, keeping the basis matrices fixed. However the number of basis matrices (or equivalently the problem size could be very large). Can we then identify a smaller subset of basis matrices and search over those?

Consider the initial transition matrix $\bP^{m}$, an input to the decomposition algorithm to obtain a permutation basis for the space of DS matrices to search on. The BN procedure returns a $\theta=(\theta_1,...,\theta_M)$ vector and  $M$ permutation matrices $(\bP_1,...,\bP_M)$. Intuitively, since higher values of $\theta_i$ contribute more to probability weight on a particular edge, it makes sense to ignore altogether very small $\theta_i$'s and hence the corresponding $\bP_i$'s. This is reasonable if we make the assumption that since our initial heuristic choice $\bP^{m}$  is a good one any improvement that we hope to find over this one is structurally similar to $\bP^{m}$. Then we use the following procedure values to filter out insignificant $\bP_i$'s
\begin{algorithm}[!htp]
   \caption{Select Basis}
   \label{alg:DiSCO}
\begin{algorithmic}
   \STATE {\bfseries Input:} $\bP_1,...,\bP_M$ and $k << M$ 
   \STATE
      \FOR{$i=1$ {\bfseries to} $M$}
    \STATE Compute $r_i=\parallel \bP^{m}-\theta_i \bP_i \parallel_{F}$ for all $i$ 
   \ENDFOR
    \STATE {\bfseries Output:} $\{\bP_i\}$ corresponding to $k$ smallest $r_i$.
\end{algorithmic}
\end{algorithm}

Later in the experiments section we will show the results for different values of $k$ for a fixed $M$. Once we have the $\bP_i$'s we have in essence fixed a subset of Birkhoff polytope for our optimization procedure to search over.  The parameter space for the search is then defined by the $\theta=(\theta_1,...,\theta_k) \in \Re^k$ such that this new $\theta$ lies in the probability simplex and $\bP(\theta)=\sum_{i=1}^{k}\theta_i \bP_i$. Clearly the SLEM is also a nonlinear function of $\theta$ and will be written as $\mu(\theta)$. Note that since $\bP(\theta)$ is symmetric, $\bP(\theta)=(\bP(\theta)+\bP(\theta)^{T})/2=\sum_{i=1}^{k}\theta_i(\bP_i+\bP_i^{T})/2$. Hence our basis matrices are $(\bP_i+\bP_i^{T})/2$ for each $i$ to maintain the symmetry constraint.

In the ensuing sections we show experimental evidence and demonstrate that it is possible to truncate the space considerably and still obtain reasonable results. 
\subsection{Basis Subset Optimization for Fastest Mixing Chain} 

The most commonly used heuristic for fast mixing is the Metropolis-Hastings random walk. To obtain a Markov chain with the uniform stationary distribution the following transition matrix is constructed 
	\[
	P_{ij} = \left\{ \begin{array}{ll}
		\min\{ 1/d_i, 1/d_j \} & (i,j) \in \mc{E} \\
		\sum_{(i,k) \in \mc{E}} \max\{ 0, 1/d_i - 1/d_k\} & i = j \\
		0 & (i,j) \notin \mc{E}
		\end{array}
		\right.
	\]
Using the procedure to identify the basis subset in the previous section. We BN-decompose the $\bP^m$ matrix  to obtain the subset space to search over. Next we use a subgradient method to solve the fastest mixing problem on this smaller subset of DS matrices constrained by the fixed chosen permutation basis.

\subsubsection{Subgradient Method}
In this parameter space the optimization problem becomes
\begin{align}
&\min \ \mu(\theta)  \notag \\
& \text{subject to } \ \sum_{i=1}^{k} \theta_i =1 ,\theta_i \geq 0   
\end{align}  

The SLEM in general is a non-differentiable function of the entries of the matrix and therefore as a function of the $\theta$ parameter. We use subgradients to solve the optimization problem. It is easily shown that the subgradient $g$ corresponding to the equation below is dependent on the eigenvector corresponding to the second largest eigenvalue in magnitude (See \cite{DiaconisBoyd})
\begin{align}
\mu(\tilde{\theta}) &\geq \mu(\theta)+\mbf{v}(\theta)^{T}(\bP(\tilde{\theta})-\bP(\theta))\mbf{v} \notag \\
                    &=\mu(\theta)+\delta \theta^{T}g
\end{align}
and is given by $\g=(\mbf{v}(\theta)^{T}\bP_i\mbf{v}(\theta),...,\mbf{v}(\theta)^{T}\bP_k\mbf{v}(\theta))$. Where ($\bP_1,...,\bP_k$) are the permutation basis and $\mbf{v}(\theta)$ is the eigenvector  corresponding to the second largest eigenvalue in magnitude. Thus at each iteration an eigenvector computation is required. 

The projected subgradient method then proceeds as usual on this considerably smaller $k$-dimensional space and involves a projection step onto the probability simplex for which we use an efficient algorithm described in [8]. 

The overall algorithm can be given as
\begin{algorithm}
\begin{algorithmic}
\STATE {\bfseries Input:} $\bP^m$ and $k << M$ 
\STATE Compute the Metropolis Hastings matrix $\bP^m$ for a given Graph $G$.
\STATE Compute the BN decomposition of $\bP^m$ to obtain a permutation basis such that $\bP^m=\sum_{i=1}^{M}\theta_i^{m}\bP_i$.
\STATE Truncate the basis by ignoring $\bP_i$ corresponding to the $(M-k)$ $\theta_i^{m}$'s returned by the truncation procedure.
\STATE Solve the optimization problem (5) using the subgradient method.
  \STATE {\bfseries Output:} $\{\theta_i\}$ corresponding to $k$ smallest $r_i$.
\end{algorithmic}
\end{algorithm}






\begin{figure}
\centering
\includegraphics[width=4.0in]{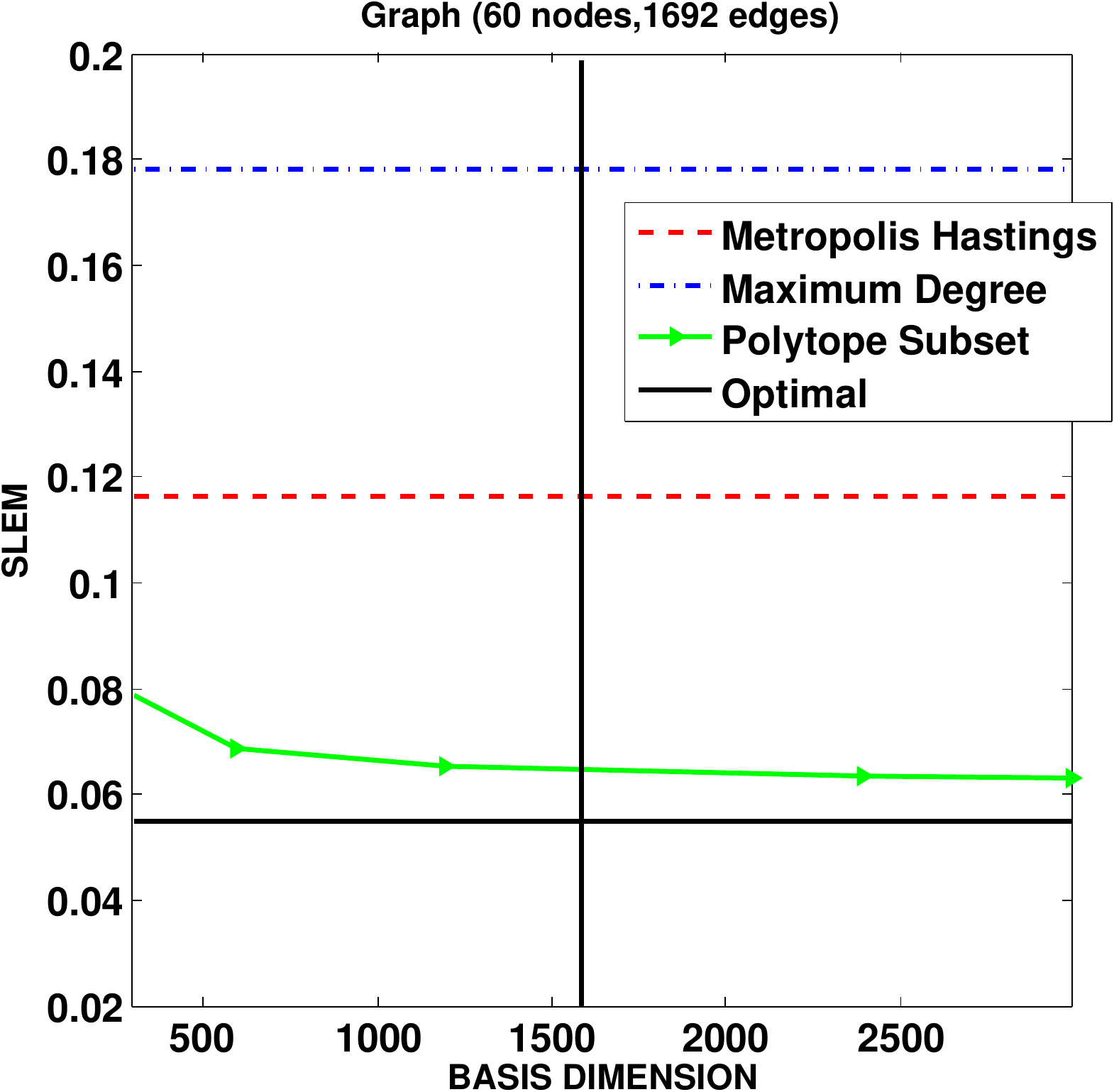}
\caption{SLEM for a fixed graph with varying basis dimension size for our method.  The horizontal axis is the number of basis
elements used, i.e. the number of variables being optimized.  The
vertical line is the number of edges, i.e. the number of variables
being optimized in a direct optimization approach.The parallel lines include the corresponding SLEM values for Metropolis Hastings and the Optimal. The SLEM values comes close to the optimal as we increase the basis dimension.}
\label{fig:slem}
\end{figure}

\begin{figure}[htp!]
\centering
\begin{tabular}{c}
\includegraphics[width=4in]{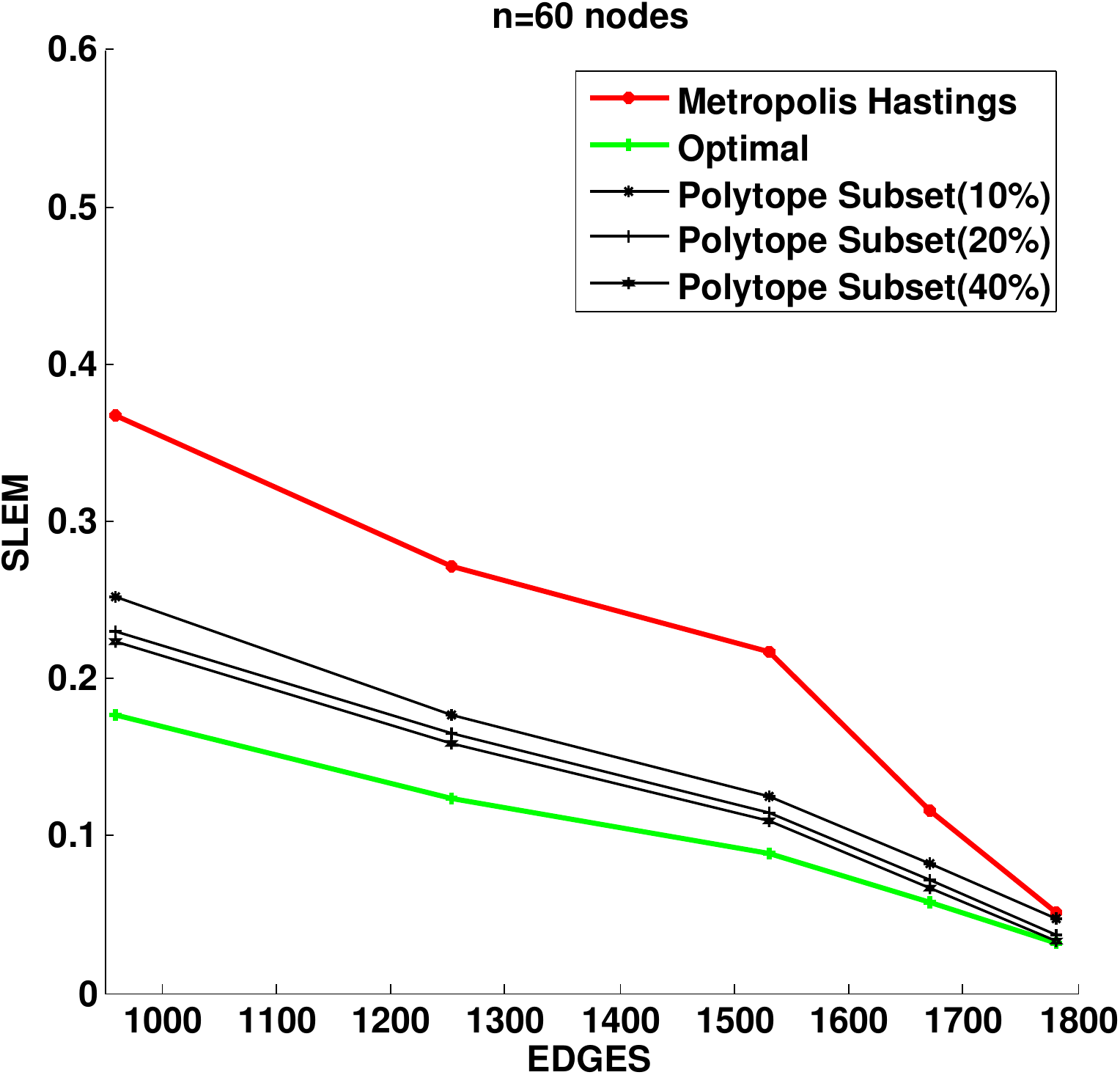}\\
\vspace{-11em}\\
\includegraphics[width=5in]{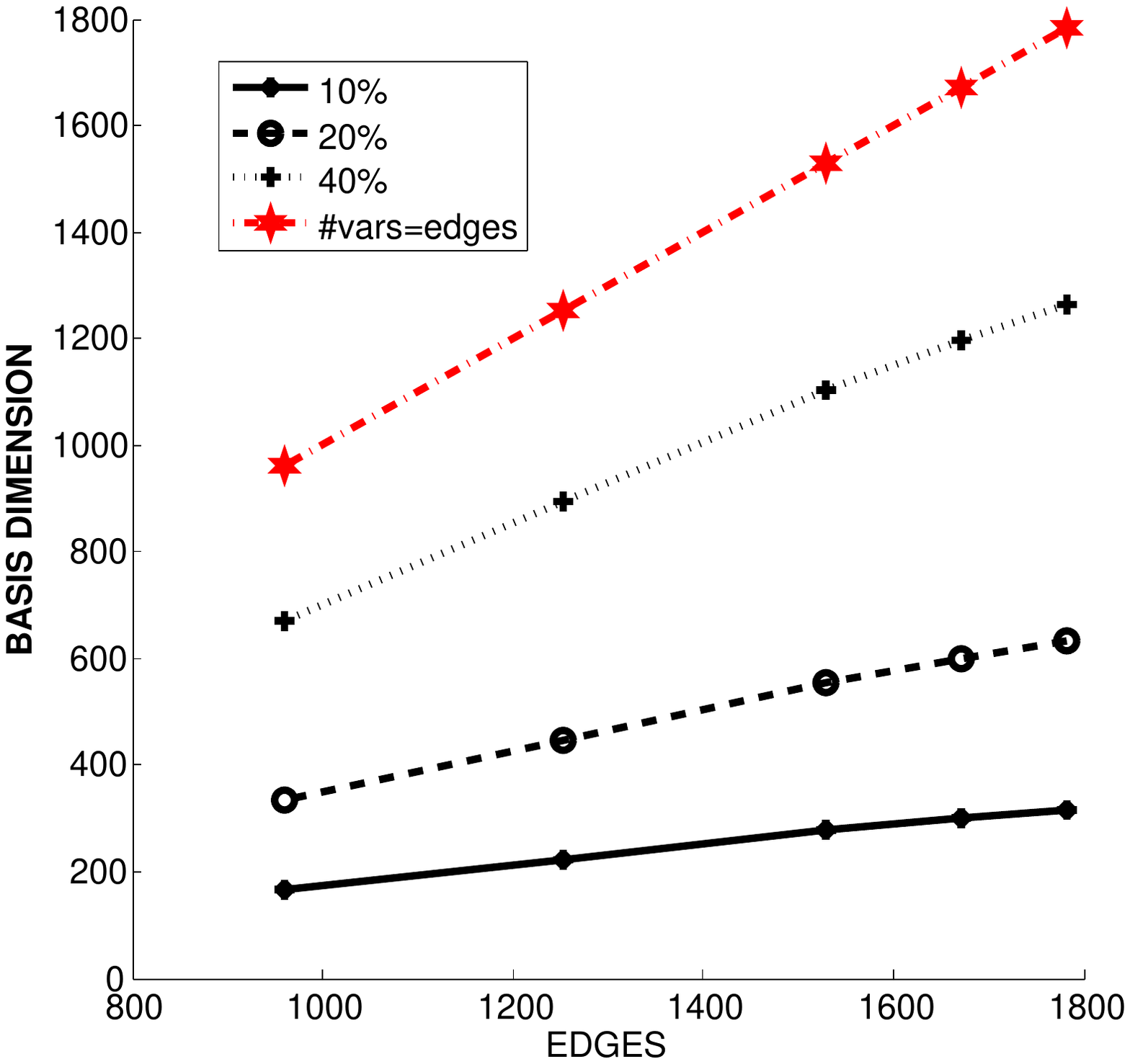}
\end{tabular}
\vspace{-7em}

\caption{a) SLEM for graphs with varying no. of edges. The \% indicates the proportion of total basis matrices used. Even with $10\%$ of the total variables, performance is much better than the Metropolis Hastings chain. b) The plot shows the basis dimension (problem size)(Y) for each of the graphs (edges (Y)) in the previous plot for each of the \% levels. The number of variables are considerably less than edges in the graph at the 10\% level. The top line shows the no. of variables as the number of edges in the direct optimization approach.}
\label{fig:varslem}
\end{figure}

\begin{figure}
\centering
\includegraphics[width=4.0in]{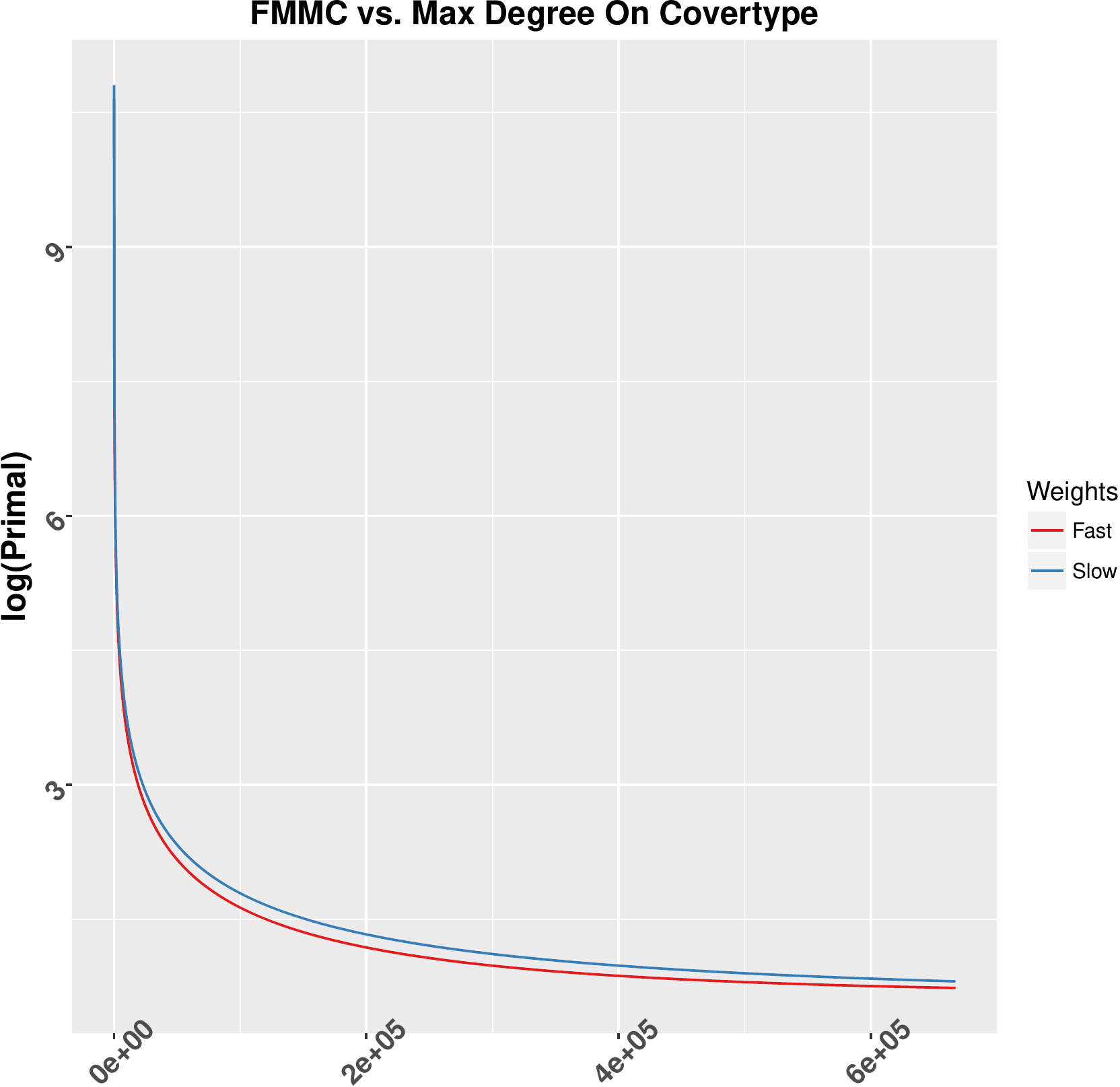}
\caption{Objective value on \ctype~dataset with the Max Degree and the Fast Mixing Markov Walk. The objective decays slightly faster than the Max Degree chain.}
\label{fig:FMMCvsMaxDegree-crop}
\end{figure}

\subsection{Simulation}

\removed{Figure \eqref{fig:mixing} shows the effect of Algorithm \eqref{alg:DiSCO} on a dataset with the weights set according to the fast mixing Markov chain versus the Metropolis Hastings graph. We get slightly faster convergence on fast mixing graph. Here $m=256$ and the topology is a $k$-regular graph. The results were averaged over $5$ iterations.}

Next we present results on a small graph with $60$ nodes generated uniformly as described in fastest mixing paper \cite{DiaconisBoyd} . Figure \eqref{fig:slem} shows a graph with $n=60$ nodes and the corresponding SLEM's as we increase the basis dimension or equivalently the number of permutation basis matrices used. The middle line indicates the number of edges in the graph. It can be seen that even with a limited basis dimension we do considerably better than the Metropolis-Hastings chain and stay relatively close to the optimal. Thus the basis dimension can also be used as a knob to control the accuracy vs efficiency tradeoff.

Figure \eqref{fig:varslem}(a) shows the SLEM values on graphs with $n=60$ nodes and varying number of edges. We can see that even with $10\%$ of permutation basis we stay relatively close to the optimal value and significantly better than the Metropolis Hastings chain. In figure 2(b) we can see in the top line the gain we get if were to solve the optimization problem with the number of variables equal to the number of edges as opposed to a fraction of the total basis dimension.

Finally, in Figure \eqref{fig:FMMCvsMaxDegree-crop} we show the performance of the fastest mixing Markov chain on Algorithm \eqref{alg:DiSCO} and compare it to the max-degree Markov chain defined in \cite{DiaconisBoyd}. On the dataset \ctype~ with $m=512$ nodes the fmmc graph seems to perform slightly better. Each trajectory was averaged over $10$ runs.


%

\section{Summary}

In this chapter we presented an efficient subgradient algorithm based on the Birkhoff-von Neumann decomposition to get the approximate fastest mixing rate Markov chain on a graph. This gives us relatively efficient way to set weights on a network topology that is applicable to all consensus based algorithms.



\chapter{Conclusion} 
\label{Chapter7}
\lhead{\emph{Summary and Conclusions}} 

In this thesis we proposed alternative analysis of distributed stochastic optimization schemes and showed that the performance of these methods, under the homogeneity assumption, depends on data dependent properties. We first analyzed a consensus based SGD scheme (Algorithm \ref{alg:DiSCO}) and showed that the rate of convergence involves the spectral properties of two matrices: the standard spectral gap of a weight matrix from the network topology and a new term depending on the spectral norm of the sample covariance matrix of the data (Theorem \ref{theorem:mainThrm}). This dependence allowed us to understand interactions between the data and the network properties, thereby allowing for network regimes better suited for certain distributions. Existing literature does not make any assumption on the data and we showed that under the homogeneity assumption we can obtain better convergence properties.

Since the consensus schemes communicate at every iteration we proposed a novel mini-batch scheme to achieve faster convergence in relatively fewer iterations (Theorem \ref{theorem:mainThrm:mbatch}). Another consequence of this analysis was to show that data distributions with low $\rho^2$ are more tolerant of skipped communication rounds. More surprisingly we were able to show that in the asymptotic regime, for smooth loss functions, the network and data effect completely disappears and we can get optimal performance from Algorithm \ref{alg:DiSCO} (Theorem \ref{theorem:mainThrmLargeApplication}).

Additionally we provided a data dependent analysis of speedups for the non-smooth mini-batch SGD case. Theoretically and empirically we showed that distributions with smaller value of $\rho^2$ are better suited for distributed optimization. We noted that these methods perform better on sparse datasets.

From an empirical perspective the most important contribution of this thesis is to show that the convergence and parallelization benefits of Distributed SGD algorithms depend on the largest eigenvalue of the sample covariance which can be estimated by using the power method. If it is too large (closer to $1$ than $0$) it indicates a high degree of repetition or correlation in the features. A key point is that for distributed SGD we gain more if there is more variety across the samples distributed in a network. For e.g. - for duplicates of samples distributed on different machines we won't gain anything since samples on different nodes offer no new information. Thus distributing a dataset is only really going to help if there is less redundancy in the dataset. This is captured by the largest eigenvalue of the sample covariance. 

We believe that this thesis provides a foundation of such analysis and the data dependence of several other distributed schemes could greatly explain their empirical properties. In the future we plan to explore the data dependent aspects of other learning schemes such as distributed one-shot averaging and even sequential SGD.



\addtocontents{toc}{\vspace{2em}} 





\addtocontents{toc}{\vspace{2em}}  
\backmatter

\label{Bibliography}
\bibliographystyle{abbrvnat}
\bibliography{opt}
\newpage 

\end{document}